\newcommand{\Ga}{\Gamma}
\newcommand{\Si}{\Sigma}
\newcommand{\bC}{\mathbb{C}}
\newcommand{\bE}{\mathbb{E}}
\newcommand{\bP}{\mathbb{P}}
\newcommand{\bZ}{\mathbb{Z}}
\newcommand{\balpha}{{\boldsymbol\alpha}}
\newcommand{\cH}{\mathcal{H}}
\newcommand{\cE}{\mathcal{E}}
\newcommand{\cL}{\mathcal{L}}
\newcommand{\cM}{\mathcal{M}}
\newcommand{\cO}{\mathcal{O}}
\newcommand{\cS}{\mathcal{S}}
\newcommand{\ev}{\mathrm{ev}}
\newcommand{\Res}{\mathrm{Res}}
\newcommand{\val}{ {\mathrm{val}} }
\newcommand{\Aut}{\mathrm{Aut}}
\newcommand{\vir}{{\mathrm{vir}} }
\newcommand{\Jac}{{\mathrm{Jac}} }
\newcommand{\ch}{\mathrm{ch}}
\newcommand{\SYZ}{\mathrm{SYZ}}
\newcommand{\bu}{\mathbf{u}}
\newcommand{\bt}{\mathbf{t}}
\newcommand{\one}{\mathbf{1}}
\newcommand{\w}{\mathsf{w}}
\newcommand{\halpha}{{\hat\alpha}}
\newcommand{\hbeta}{{\hat\beta}}
\newcommand{\hualpha}{{\hat{\underline \alpha}}}
\newcommand{\hubeta}{{\hat{\underline \beta}}}
\newcommand{\hugamma}{{\hat{\underline \gamma}}}
\newcommand{\tS}{\widetilde{S}}
\newcommand{\tch}{\widetilde{\ch}}
\newcommand{\tW}{\widetilde{W}}
\newcommand{\txi}{\widetilde{\xi}}
\newcommand{\tx}{\widetilde{x}}
\newcommand{\vGa}{\vec{\Gamma}}
\newcommand{\bGa}{\mathbf{\Gamma}}
\newcommand{\Mbar}{\overline{\cM}}
\newcommand{\lp}{{(\!(}}
\newcommand{\rp}{{)\!)}}
\newtheorem{Theorem}{Theorem}
\newtheorem{dummy}{dummy}[section]
\newtheorem{lemma}[dummy]{Lemma}
\newtheorem{theorem}[dummy]{Theorem}
\newtheorem{corollary}[dummy]{Corollary}
\newtheorem{proposition}[dummy]{Proposition}
\newtheorem{definition}[dummy]{Definition}
\newtheorem{remark}[dummy]{Remark}
\begin{document}
\title[Eynard-Orantin Recursion and Equivariant mirror symmetry for $\bP^1$]{The Eynard-Orantin
Recursion and Equivariant mirror symmetry for the projective line}
\author{Bohan Fang}
\address{Bohan Fang, Beijing International Center for Mathematical Research, Peking University, 5 Yiheyuan Road, Beijing 100871, China}
\email{bohanfang@gmail.com  }

\author{Chiu-Chu Melissa Liu}
\address{Chiu-Chu Melissa Liu, Department of Mathematics, Columbia University, 2990 Broadway, New York, NY 10027}
\email{ccliu@math.columbia.edu}

\author{Zhengyu Zong}
\address{Zhengyu Zong, Yau Mathematical Sciences Center, Tsinghua University, Jin Chun Yuan West Building,
Tsinghua University, Haidian District, Beijing 100084, China}
\email{zyzong@mail.tsinghua.edu.cn}

\begin{abstract}
We study the equivariantly perturbed mirror Landau-Ginzburg model of $\bP^1$. We show that the Eynard-Orantin recursion on this model encodes all genus all descendants equivariant Gromov-Witten invariants of $\bP^1$. The non-equivariant limit of this result is the Norbury-Scott conjecture \cite{NS, DOSS}, while by taking large radius limit we recover
the Bouchard-Mari\~no conjecture on simple Hurwitz numbers \cite{BM08}.
\end{abstract}

\maketitle

\tableofcontents

\section{Introduction}

The equivariant Gromov-Witten theory of $\bP^1$ has been studied extensively.
In \cite{OP1, OP2}, Okounkov-Pandharipande completely solved the equivariant Gromov-Witten
theory of the projective line and  established a GW/H correspondence between the stationary sector of
Gromov-Witten theory of $\bP^1$ and Hurwitz theory. In \cite{G01}, Givental derived a quantization
formula for all genus descendant potential of the equivariant Gromov-Witten
theory of $\bP^1$ (and more generally, $\bP^n$). In the non-equivariant limit, these results imply
the Virasoro conjecture of $\bP^1$.

The Norbury-Scott conjecture \cite{NS} relates (non-equivariant) Gromov-Witten invariants of
$\bP^1$  to Eynard-Orantin invariants \cite{EO07} of the affine plane curve
$\{ x = Y+\frac{1}{Y}:(x,Y)\in \bC\times\bC^*\}$. In \cite{DOSS}, P. Dunin-Barkowski, N. Orantin,
S. Shadrin, and L. Spitz relate the Eynard-Orantin topological recursion to the Givental formula
for the ancestor formal Gromov-Witten potential, and prove the Norbury-Scott conjecture using their main result
and Givental's quantization formula for all genus descendant potential of the (non-equivariant)
Gromov-Witten theory of $\bP^1$. It is natural to ask if the Norbury-Scott conjecture
can be extended to the equivariant setting, such that the original conjecture can be
recovered in the non-equivariant limit.

\subsection{Main Results}
Our first main result  (Theorem \ref{main} in Section \ref{sec:main-results})
relates equivariant Gromov-Witten invariants of $\bP^1$ to
the Eynard-Orantin invariants \cite{EO07} of the affine curve
$$
\{ x = t^0+ Y +\frac{Q e^{t^1}}{Y} +\w_1 \log Y +\w_2\log\frac{Qe^{t^1}}{Y}): (x,Y)\in \bC\times \bC^*\}
$$
where $t^0, t^1$ are complex parameters,  $\w_1,\w_2$ are equivariant parameters of the torus $T=(\bC^*)^2$
acting on $\bP^1$, and $Q$ is the Novikov variable encoding the degree of the stable maps
to $\bP^1$ (see Section \ref{sec:TGW}).
The superpotential of the $T$-equivariant Landau-Ginzburg mirror of the projective line is given by
$$
W_t^\w:\bC^*\to \bC,\quad W_t^\w(Y)= t_0+ Y +\frac{Qe^{t^1}}{Y} +\w_1 \log Y +\w_2\log\frac{Qe^{t^1}}{Y},
$$
so Theorem \ref{main} can be viewed as a version of
all genus equivariant mirror symmetry for $\bP^1$. We prove Theorem \ref{main} using the main result
in \cite{DOSS} and a suitable version of Givental's formula for all genus {\em equivariant} descendant
Gromov-Witten potential of $\bP^n$ \cite{G01} (see also \cite{LP}).

Our second main result (Theorem \ref{full-descendant} in Section \ref{sec:main-results})
gives a precise correspondence between (A) genus-$g$, $n$-point descendant equivariant Gromov-Witten invariants of $\bP^1$,
and (B) Laplace transforms of the Eynard-Orantin invariant $\omega_{g,n}$ along Lefschetz thimbles.
This result generalizes the known relation between the A-model genus-$0$ $1$-point descendant Gromov-Witten invariants
and the B-model oscillatory integrals.

\subsection{Non-equivariant limit and the Norbury-Scott conjecture}
Taking the non-equivariant limit $\w_1=\w_2=0$, we obtain
$$
W_t(Y) = t^0 +  Y+ \frac{Qe^{t^1}}{Y}.
$$
which is the superpotential of the (non-equivariant) Landau-Ginzburg mirror
for the projective line. We obtain all genus (non-equivariant) mirror symmetry
for the projective line.

In the stationary phase $t^0=t^1=0,Q=1$, the curve becomes
$$
\{ x = Y+ Y^{-1}:(x,Y)\in \bC\times \bC^*\},
$$
and Theorem \ref{main} specializes to the Norbury-Scott conjecture \cite{NS}.
(See Section \ref{sec:NS} for details.)

\subsection{Large radius limit and the Bouchard-Mari\~{n}o conjecture}
Let $\w_2=0$, $t_0=0$ and $q=Qe^{t^1}$, we obtain
$$
x = Y +\frac{q}{Y}+ \w_1 \log Y
$$
which reduces to
$$
x= Y +\w_1\log Y
$$
in the large radius limit $q\to 0$. The $\bC^*$-equivariant
mirror of the affine line $\bC$ is given by
$$
W:\bC^*\to \bC,\quad W(Y)= Y + \w_1 \log Y.
$$
In the large radius limit, we obtain a version
of all genus $\bC^*$-equivariant mirror symmetry
of the affine line $\bC$.

In particular, let $\w_1=-1$ and $X=e^{-x}$, we
obtain the Lambert curve
$$
X = Y e^{-Y}.
$$
In this limit, Theorem \ref{main} specializes
to the Bouchard-Mari\~{n}o conjecture \cite{BM08} relating
simple Hurwitz numbers (related to linear Hodge integrals
by the ELSV formula \cite{ELSV,GV}) to Eynard-Orantin invariants
of the Lambert curve.  (See Section \ref{sec:BM} for details.)

In \cite{BEMS}, Borot-Eynard-Mulase-Safnuk introduced a new matrix model
representation for the generating function of simple Hurwitz
numbers, and proved the Bouchard-Mari\~{n}o conjecture.
In \cite{EMS}, Eynard-Mulase-Safnuk provided another proof of the
Bouchard-Mari\~{n}o conjecture  using the cut-and-joint equation
of simple Hurwitz numbers. Recently, a new proof of the ELSV formula
and a new proof of the Bouchard-Mari\~{n}o conjecture are given in  \cite{DKOSS}.

\subsection*{Acknowledgment}
We thank P. Dunin-Barkowski, B. Eynard, M. Mulase, P. Norbury, and N. Orantin for helpful conversations.
The research of the authors is partially supported by NSF DMS-1206667 and NSF DMS-1159416.

\section{A-model}
Let $T=(\bC^*)^2$ act on $\bP^1$ by
$$
(t_1,t_2)\cdot [z_1, z_2]=
[t_1^{-1} z_1, t_2^{-1} z_2].
$$
Let $\bC[\w]:=\bC[\w_1,\w_2]$ be
the $T$-equivariant cohomology of a point:
$H_T^*(\mathrm{point};\bC)=\bC[\w]$.

\subsection{Equivariant cohomology of $\bP^1$}
The $T$-equivariant cohomology of $\bP^1$ is given by
$$
H^*_T(\bP^1;\bC)= \bC[H,\w]/\langle (H-\w_1)(H-\w_2)\rangle
$$
where $\deg H=\deg \w_i=2$.
Let $p_1=[1,0]$ and $p_2=[0,1]$ be the $T$ fixed points.
Then $H|_{p_i}= \w_i$. The $T$-equivariant Poincar\'{e} dual
of $p_1$ and $p_2$ are $H-\w_2$ and $H-\w_1$, respectively.
Let
$$
\phi_1 := \frac{H-\w_2}{\w_1-\w_2},\quad
\phi_2 := \frac{H-\w_1}{\w_2-\w_1} \in H^*_T(\bP^1;\bC)\otimes_{\bC[\w]}\bC[\w, \frac{1}{\w_1-\w_2}]
$$
Then $\deg \phi_\alpha=0$,
$$
\phi_\alpha \cup \phi_\beta = \delta_{\alpha \beta}\phi_\alpha,
$$
So $\{\phi_1, \phi_2\}$ is a canonical basis of
the semisimple algebra
$$
H_T^*(\bP^1;\bC)\otimes_{\bC[\w]} \bC[\w,\frac{1}{\w_1-\w_2}].
$$
We have
$$
\phi_1+\phi_2=1,
$$
$$
(\phi_\alpha, \phi_\beta):=\int_{\bP^1}\phi_\alpha \cup \phi_\beta
=\delta_{\alpha \beta} \int_{\bP^1}\phi_\alpha = \frac{\delta_{\alpha \beta}}{\Delta^\alpha},\qquad\alpha,\beta\in\{1,2\},
$$
where
$$
\Delta^1 =\w_1-\w_2 ,\quad \Delta^2 =\w_2-\w_1.
$$

Cup product with the hyperplane class is given by
$$
H\cup  \phi_\alpha = \w_\alpha \phi_\alpha,\quad \alpha=1, 2.
$$

\subsection{Equivariant Gromov-Witten invariants of $\bP^1$} \label{sec:TGW}
Suppose that $d>0$ or $2g-2+n>0$, so that $\Mbar_{g,n}(\bP^1,d)$ is non-empty.
Given $\gamma_1,\ldots, \gamma_n\in H_T^*(\bP^1,\bC)$ and $a_1,\ldots,a_n\in \bZ_{\geq 0}$, we
define genus $g$, degree $d$, $T$-equivariant descendant Gromov-Witten invariants of $\bP^1$:
$$
\langle \tau_{a_1}(\gamma_1)\ldots\tau_{a_n}(\gamma_n)\rangle_{g,n,d}^{\bP^1,T}:=
\int_{[\Mbar_{g,n}(\bP^1,d)]^\vir} \prod_{j=1}^n \psi_j^{a_j}\ev_j^*(\gamma_j) \in \bC[\w]
$$
where $\ev_j: \Mbar_{g,n}(\bP^1,d) \to \bP^1$ is the evaluation at the $j$-th marked point,
which is a $T$-equivariant map.   We define genus $g$, degree $d$ primary Gromov-Witten invariants:
$$
\langle \gamma_1,\ldots, \gamma_n\rangle_{g,n,d}^{\bP^1,T}:=
\langle \tau_0(\gamma_1) \cdots \tau_0(\gamma_n)\rangle_{g,n,d}^{\bP^1,T}.
$$

Let $\bt=t^0 1 + t^1 H$. If $2g-2+n>0$, define
$$
\llangle \tau_{a_1}(\gamma_1),\ldots,\tau_{a_n}(\gamma_n)\rrangle_{g,n}^{\bP^1,T}
:= \sum_{d\geq 0} Q^d \sum_{\ell =0}^\infty \frac{1}{\ell !}\langle \tau_{a_1}(\gamma_1) \cdots \tau_{a_n}(\gamma_n)
\underbrace{\tau_0(\bt) \cdots \tau_0(\bt)}_{\ell \textup{ times}}\rangle_{g,n+\ell,d}^{\bP^1,T}
$$

Suppose that $2g-2+n+m>0$. Given $\gamma_1,\ldots, \gamma_{n+m}\in H^*_T(\bP^1)$, we define
\begin{eqnarray*}
&& \langle \frac{\gamma_1}{z_1-\psi_1},\ldots, \frac{\gamma_n}{z_n-\psi_n}, \gamma_{n+1},\ldots, \gamma_{n+m} \rangle^{\bP^1,T}_{g,n+m,d}\\
&:=&  \sum_{a_1,\ldots,a_n\geq 0} \langle \tau_{a_1}(\gamma_1)\cdots \tau_{a_n}(\gamma_n)
\tau_0(\gamma_{n+1})\cdots \tau_0(\gamma_{n+m})\rangle^{\bP^1,T}_{g,n+m,d}\prod_{j=1}^n z_j^{-a_j-1}.
\end{eqnarray*}
In particular, if $n+m\geq 3$ then
\begin{equation}\label{eqn:genus-zero-stable}
\begin{aligned}
  & \langle \frac{\gamma_1}{z_1-\psi_1},\ldots, \frac{\gamma_n}{z_n-\psi_n}, \gamma_{n+1},\ldots,\gamma_{n+m}\rangle^{\bP^1,T}_{0,n+m,0}\\
= & \frac{1}{z_1\cdots z_n}(\frac{1}{z_1}+\cdots + \frac{1}{z_n})^{n+m-3}\int_{\bP^1}\gamma_1\cup \cdots \cup \gamma_{n+m}
\end{aligned}
\end{equation}
where we use the fact $\Mbar_{0,n+m}(\bP^1,0)=\Mbar_{0,m+n}\times \bP^1$, and the identity
$$
\int_{\Mbar_{0,k}}\psi_1^{a_1}\cdots \psi_k^{a_k} =\begin{cases}
\displaystyle{ \frac{(k-3)!}{\prod_{j=1}^k a_j!} } & \textup{if } a_1+\cdots + a_k = k-3,\\
0, & \textup{otherwise}.
\end{cases}
$$
We use the second line of \eqref{eqn:genus-zero-stable} to extend the definition
of the correlator in the first line of \eqref{eqn:genus-zero-stable}
to the unstable cases $(n,m)=(1,0), (1,1), (2,0)$:
\begin{eqnarray*}
\langle\frac{\gamma_1}{z_1-\psi_1}\rangle^{\bP^1,T}_{0,1,0} &:=& z_1\int_{\bP^1}\gamma_1 \\
\langle\frac{\gamma_1}{z_1-\psi_1}, \gamma_2\rangle^{\bP^1,T}_{0,2,0} &:=& \int_{\bP^1}\gamma_1\cup \gamma_2\\
\langle\frac{\gamma_1}{z_1-\psi_1},\frac{\gamma_2}{z_2-\psi_2}\rangle^{\bP^1,T}_{0,2,0} &:=& \frac{1}{z_1+z_2} \int_{\bP^1}\gamma_1\cup \gamma_2
\end{eqnarray*}

Suppose that $2g-2+n+m>0$ or $n>0$. Given $\gamma_1,\ldots, \gamma_{n+m}\in H_T^*(\bP^1)$, we define
\begin{eqnarray*}
&& \llangle \frac{\gamma_1}{z_1-\psi_1},\ldots, \frac{\gamma_n}{z_n-\psi_n}, \gamma_{n+1},\ldots, \gamma_{n+m} \rrangle^{\bP^1,T}_{g,n+m} \\
&:= &\sum_{d\geq 0}\sum_{\ell\geq 0} \frac{Q^d}{\ell!}
\langle \frac{\gamma_1}{z_1-\psi_1},\ldots, \frac{\gamma_n}{z_n-\psi_n}, \gamma_{n+1},\ldots, \gamma_{n+m},
\underbrace{\bt,\ldots,\bt}_{\ell \textup{ times }} \rangle^{\bP^1,T}_{g,n+m+\ell,d}.
\end{eqnarray*}


Let $q=Qe^{t^1}$. Then for $m\geq 3$,
$$
\llangle \gamma_1,\ldots, \gamma_m\rrangle^{\bP^1,T}_{0,m} =
\sum_{d\geq 0} q^d \langle
\gamma_1,\ldots, \gamma_m \rangle^{\bP^1,T}_{0,m,d}
= \delta_{m,3}\int_{\bP^1}\gamma_1\cup \cdots\cup \gamma_m
+ q \prod_{i=1}^m (\int_{\bP^1} \gamma_i).
$$

\subsection{Equivariant quantum cohomology of $\bP^1$}\label{sec:QH}
As a $\bC[\w]$-module, $QH^*_T(\bP^1;\bC) = H^*_T(\bP^1;\bC)$. The
ring structure is given by the quantum product $*$ defined by
$$
(\gamma_1 \star \gamma_2, \gamma_3) = \llangle \gamma_1,\gamma_2,\gamma_3\rrangle_{0,3}^{\bP^1,T},
$$
or equivalently,
$$
\gamma_1\star \gamma_2 = \gamma_1 \cup \gamma_2 + q (\int_{\bP^1}\gamma_1)(\int_{\bP^1}\gamma_2).
$$
where $\cup$ is the product in $H^*_T(\bP^1)$, and $q=Qe^{t^1}$.
In particular,
$$
H\star H = (\w_1+\w_2)H -\w_1\w_2 + q.
$$
The $T$-equivariant quantum cohomology of $\bP^1$ is
$$
QH^*_T(\bP^1;\bC)=\bC[H,\w,q]/\langle (H-\w_1)\star(H-\w_2)-q\rangle
$$
where $\deg H=\deg \w_i=2$, $\deg q =4$.

The (non-equivariant) quantum cohomology of $\bP^1$ is
$$
\bC[H,q]/\langle H\star H -q\rangle
$$

Let
\begin{eqnarray*}
\phi_1(q) &=& \frac{1}{2}+ \frac{H-\frac{\w_1+\w_2}{2} }{ (\w_1-\w_2)\sqrt{ 1+\frac{4q}{(\w_1-\w_2)^2}} }, \\
\phi_2(q) &=& \frac{1}{2}+\frac{H-\frac{\w_1+\w_2}{2}}{(\w_2-\w_1)\sqrt{1+\frac{4q}{(\w_1-\w_2)^2}} }.
\end{eqnarray*}
Then
$$
\phi_\alpha(q)\star\phi_\beta(q)=\delta_{\alpha\beta} \phi_\alpha(q),
$$
So $\{\phi_1(q),\phi_2(q)\}$ is a canonical basis of the semi-simple algebra
$$
QH^*_T(\bP^1;\bC)\otimes\bC[\w,\frac{1}{\Delta^1(q)}]
$$
where $\Delta^1(q)$ is defined by \eqref{eqn:Delta-one}. We also have
\begin{eqnarray*}
( \phi_\alpha(q),\phi_\beta(q) ) &= & ( 1\star \phi_\alpha(q), \phi_\beta(q) )
= ( 1, \phi_\alpha(q)\star\phi_\beta(q) ) \\
&=& \delta_{\alpha\beta} ( 1, \phi_\alpha(q) )
=\delta_{\alpha\beta}\int_{\bP^1}\phi_\alpha(q) =\frac{\delta_{\alpha\beta}}{\Delta^\alpha(q)},
\end{eqnarray*}
where
\begin{eqnarray*}
\Delta^1(q) &=& (\w_1-\w_2)\sqrt{ 1+\frac{4q}{(\w_1-\w_2)^2} },  \label{eqn:Delta-one}\\
\Delta^2(q) &=&  (\w_2-\w_1)\sqrt{ 1+\frac{4q}{(\w_1-\w_2)^2} } =-\Delta^1(q).
\end{eqnarray*}
Quantum multiplication by the hyperplane class is given by
$$
H\star \phi_\alpha =  \frac{\w_1+\w_2 +\Delta^\alpha(q)}{2} \phi_\alpha,\quad \alpha=1,2.
$$

Finally, we take the non-equivariant limit $\w_2=0$, $\w_1\to 0^+$.
We obtain:
$$
\phi_1(q)=\frac{1}{2}+\frac{H}{2\sqrt{q}},
\quad \phi_2(q)=\frac{1}{2}-\frac{H}{2\sqrt{q}},
$$
$$
\Delta^1(q)=2\sqrt{q},\quad \Delta^2(q)=-2\sqrt{q},
$$
$$
H\star\phi_1(q)= \sqrt{q}\phi_1(q),\quad
H\star\phi_2(q)=-\sqrt{q}\phi_2(q).
$$
These non-equivariant limits coincide with the results in
\cite[Section 2]{SS}.
\subsection{The A-model canonical coordinates and the $\Psi$-matrix}\label{sec:A-canonical}
Let $\{t^0,t^1\}$ be the flat coordinates with respect to the basis $\{\one,H\}$, and
let $\{u^1,u^2\}$ be the canonical coordinates with respect to the basis $\{\phi_1(q), \phi_2(q)\}$.
Then
\begin{eqnarray*}
\frac{\partial}{\partial u^1}&=&\frac{1}{2}(1-\frac{\w_1+\w_2}{\Delta^1(q)})\frac{\partial}{\partial t^0}
+\frac{1}{\Delta^1(q)}\frac{\partial}{\partial t^1},\\
\frac{\partial}{\partial u^2}&=& \frac{1}{2}(1-\frac{\w_1+\w_2}{\Delta^2(q)}) \frac{\partial}{\partial t^0}
+\frac{1}{\Delta^2(q)}\frac{\partial}{\partial t^1},
\end{eqnarray*}
\begin{eqnarray*}
du^1&=& dt^0+\frac{1}{2}(\Delta^1(q)+ \w_1+\w_2)dt^1 ,\\
du^2&=& dt^0+\frac{1}{2}(\Delta^2(q) +\w_1+\w_2)dt^1.
\end{eqnarray*}
The above equations determine the canonical coordinates
$u^1$ and $u^2$ up to a constant in $\bC[\w_1,\w_2,\frac{1}{\w_1-\w_2}]$.
Givental's A-model canonical coordinates $(u^1,u^2)$ are characterized
by their large radius limits:
\begin{equation}\label{eqn:u-initial}
\lim_{q\to 0} (u^1 -t^0 -\w_1 t^1) =0,\quad
\lim_{q\to 0} (u^2-t^0-\w_2 t^1) =0.
\end{equation}

For $\alpha\in \{1,2\}$ and $i\in \{0,1\}$, define $\Psi_i^{\,\ \alpha}$ by
$$
\frac{du^\alpha}{\sqrt{\Delta^\alpha(q)}} =\sum_{i=0}^1 dt^i \Psi_i^{\,\ \alpha},
$$
and define the $\Psi$-matrix to be
$$
\Psi:= \left[ \begin{array}{cc} \Psi_0^{\,\ 1} & \Psi_0^{\,\ 2}\\
\Psi_1^{\,\ 1} & \Psi_1^{\,\ 2} \end{array}\right].
$$
Then
$$
\left[\displaystyle{\begin{array}{cc}
\frac{du^1}{ \sqrt{\Delta^1(q)} } & \frac{du^2}{ \sqrt{\Delta^2(q)} }
\end{array}}\right]
= \left[\begin{array}{cc} dt^0 & dt^1\end{array}\right]\Psi,
$$
$$
\Psi_0^{\,\ \alpha}= \frac{1}{\sqrt{\Delta^\alpha(q)}},\quad
\Psi_1^{\,\ \alpha}=\frac{\Delta^\alpha(q)+\w_1+\w_2}{2\sqrt{\Delta^\alpha(q)}}.
$$
Let
$$
\Psi^{-1}=\left[ \begin{array}{cc} (\Psi^{-1})_1^{\,\ 0} & (\Psi^{-1})_1^{\,\ 1}\\
(\Psi^{-1})_2^{\,\ 0} & (\Psi^{-1})_2^{\,\ 1} \end{array}\right]
$$
be the inverse matrix of $\Psi$, so that
$$
\sum_{i=0}^1 (\Psi^{-1})_\alpha^{\,\ i} \Psi_i^{\,\ \beta} =\delta_\alpha^{\,\ \beta}.
$$
Then
$$
(\Psi^{-1})_\alpha^{\,\ 0}=\frac{\Delta^\alpha(q)-\w_1-\w_2}{2\sqrt{\Delta^\alpha(q)}},\quad
(\Psi^{-1})_\alpha^{\,\ 1}=\frac{1}{\sqrt{\Delta^\alpha(q)}}.
$$
Let $Q=1$ i.e. $q=e^{t^1}$. We take the non-equivariant limit $\w_2=0$, $\w_1\to 0^+$:
$$
u^1= t^0+2\sqrt{q},\quad u^2=t^0-2\sqrt{q},
$$
\begin{eqnarray*}
\Psi &=& \frac{1}{\sqrt{2}}\left( \begin{array}{cc}
e^{-t^1/4} & -\sqrt{-1} e^{-t^1/4}\\
e^{t^1/4} & \sqrt{-1} e^{t^1/4}
\end{array} \right) ,\\
\Psi^{-1} &=& \frac{1}{\sqrt{2}}\left( \begin{array}{cc}
e^{t^1/4} &  e^{-t^1/4}\\
\sqrt{-1}e^{t^1/4} & -\sqrt{-1} e^{-t^1/4}
\end{array} \right).
\end{eqnarray*}
These non-equivariant limits agree with the results in \cite[Section 2]{SS}.

\subsection{The $\cS$-operator}\label{sec:A-S}
The $\cS$-operator is defined as follows.
For any cohomology classes $a,b\in H_T^*(\bP^1;\bC)$, 
$$
(a,\cS(b))=\llangle a,\frac{b}{z-\psi}\rrangle^{\bP^1,T}_{0,2}.
$$
The $T$-equivariant $J$-function is characterized by
$$
(J,a) = (1,\cS(a))
$$
for any $a\in H_T^*(\bP^1)$.

Let
$$
\chi^1=\w_1-\w_2,\quad \chi^2=\w_2-\w_1.
$$
We consider several different (flat) bases for $H_T^*(\bP^1;\bC)$:
\begin{enumerate}
\item The canonical basis: $\phi_1=\displaystyle{\frac{H-\w_2}{\w_1-\w_2}}$,
$\phi_2 = \displaystyle{\frac{H-\w_1}{\w_2-\w_1}}$.
\item The basis dual to the canonical basis with respect to the $T$-equivariant Poincare pairing:
$\phi^1 =\chi^1 \phi_1$, $\phi^2=\chi^2\phi_2$.
\item The normalized canonical basis $\hat{\phi}_1 = \sqrt{\chi^1}\phi_1$, $\hat{\phi}_2 =\sqrt{\chi^2}\phi_2$,
which is self-dual: $\hat{\phi}^1=\hat{\phi}_1$, $\hat{\phi}^2=\hat{\phi}_2$.
\item The natural basis: $T_0=1$, $T_1=H$.
\item The basis dual to the natural basis: $T^0=H-\w_1-\w_2$, $T^1=1$.
\end{enumerate}

For $\alpha,\beta\in \{1,2\}$, define
$$
S^\alpha_{\,\ \beta}(z) := (\phi^\alpha, \cS(\phi_\beta)).
$$
Then $S(z)=(S^\alpha_{\,\ \beta}(z))$ is the matrix\footnote{We use
the convention that the {\em left} superscript/subscript is the {\em row} number
and the {\em right} superscript/subscript is the {\em column} number.}  of the $\cS$-operator with respect to
the ordered basis $(\phi_1,\phi_2)$:
\begin{equation}\label{eqn:S}
\cS(\phi_\beta) =\sum_{\alpha=1}^2 \phi_\alpha S^\alpha_{\,\ \beta}(z).
\end{equation}

For $i\in \{0,1\}$ and $\alpha\in \{1,2\}$, define
$$
{S}_i^{\,\ \hat\alpha}(z) := (T_i, \cS(\hat{\phi}^\alpha)).
$$
Then $(S_i^{\,\ \hat \alpha})$ is the matrix of the $\cS$-operator
with respect to the ordered bases $(\hat{\phi}^1,\hat{\phi}^2)$ and
$(T^0,T^1)$:
\begin{equation}\label{eqn:barS}
\cS(\hat{\phi}^\alpha)=\sum_{i=0}^1 T^i S_i^{\,\ \hat{\alpha}}(z).
\end{equation}
We have
$$
z \frac{\partial J}{\partial t^i} = \sum_{\alpha=1}^2 {S}_i^{\,\ \hat \alpha}(z) \hat{\phi}_\alpha.
$$

By \cite{G96, LLY}, the equivariant $J$-function is
$$
J = e^{(t^0+t^1 H)/z} (1+ \sum_{d=1}^\infty \frac{q^d}{\prod_{m=1}^d(H-\w_1+mz)\prod_{m=1}^d(H-\w_2+mz)}).
$$

For $\alpha=1,2$, define
$$
J^\alpha := J|_{p_\alpha}= e^{(t^0+t^1\w^\alpha)/z} \sum_{d=0}^\infty \frac{q^d}{d!z^d}
\frac{1}{\prod_{m=1}^d(\chi^\alpha+mz)}.
$$
Then
$$
z\frac{\partial J}{\partial t^0} = J = \sum_{\alpha=1}^2 J^\alpha \phi_\alpha,\quad
z\frac{\partial J}{\partial t^1} = z\sum_{\alpha=1}^2 \frac{\partial J^\alpha}{\partial t^1}\phi_\alpha.
$$
So
$$
{S}_i^{\,\ \hat \alpha}(z)= \frac{z}{\sqrt{\chi^\alpha}}\cdot  \frac{\partial J^\alpha}{\partial t^i}.
$$
Following Givental, we define
$$
\tS_i^{\,\ \hat \alpha}(z) :=  S_i^{\,\ \hat \alpha}(z)\exp\big(-\sum_{n=1}^\infty \frac{B_{2n}}{2n(2n-1)}(\frac{z}{\chi^\alpha})^{2n-1}\big).
$$
Then
\begin{eqnarray*}
\tS_0^{\,\ \hat \alpha}(z) &=& \frac{1}{\sqrt{\chi^\alpha}} \exp\big(\frac{t^0+t^1\w_\alpha}{z}-\sum_{n=1}^\infty \frac{B_{2n}}{2n(2n-1)}(\frac{z}{\chi^\alpha})^{2n-1}\big)\cdot \Big( \sum_{d=0}^\infty \frac{q^d}{d!z^d} \frac{1}{\prod_{m=1}^d(\chi^\alpha+mz)}\Big)\\
\tS_1^{\,\ \hat \alpha}(z) &=& \frac{1}{\sqrt{\chi^\alpha}} \exp\big(\frac{t^0+t^1\w_\alpha}{z}-\sum_{n=1}^\infty \frac{B_{2n}}{2n(2n-1)}(\frac{z}{\chi^\alpha})^{2n-1}\big), \\
&&  \cdot \Big(\w_\alpha\sum_{d=0}^\infty \frac{q^d}{d!z^d}
\frac{1}{\prod_{m=1}^d(\chi^\alpha+mz)} + \sum_{d=1}^\infty \frac{q^d}{(d-1)!z^d}
\frac{1}{\prod_{m=1}^d(\chi^\alpha+mz)}\Big).
\end{eqnarray*}

\subsection{The A-model $R$-matrix}

By Givental \cite{G01}, the matrix $(\tS_i^{\,\ \hat \beta})(z)$ is of the form
$$
\tS_i^{\,\ \hbeta}(z)= \sum_{\alpha=1}^2 \Psi_i^{\ \alpha} R_\alpha^{\,\ \beta}(z) e^{u^\beta/z} =(\Psi R(z))_i^{\,\ \beta} e^{u^\beta/z},
$$
where $R(z)= (R_\alpha^{\,\ \beta}(z)) = I + \sum_{k=1}^\infty R_k z^k$ and is unitary, and
$$
\lim_{q\to 0} R_\alpha^{\,\ \beta}(z) =\delta_{\alpha\beta} \exp\big(-\sum_{n=1}^\infty \frac{B_{2n}}{2n(2n-1)}
(\frac{z}{\chi^\beta})^{2n-1} \big).
$$

\subsection{Gromov-Witten potentials}
Introducing formal variables
$$
\bu=\sum_{a\geq 0}u_a z^a,
$$
where
$$
u_a = \sum_{\alpha=1}^2 u_a^\alpha \phi_\alpha(q).
$$
Define
\begin{eqnarray*}
&& F^{\bP^1,T}_{g,n}(\bu,\bt) :=
\sum_{a_1,\ldots, a_n\in \bZ_{\geq 0}} \llangle \tau_{a_1}(u_{a_1})\cdots \tau_{a_n}(u_{a_n})\rrangle_{g,n}^{\bP^1,T} \\
&& \quad =  \sum_{a_1,\ldots, a_n\in \bZ_{\geq 0}}\sum_{m=0}^\infty \sum_{d=0}^\infty
\frac{Q^d}{n!m!}\int_{[\Mbar_{g,n+m}(\bP^1,d)]^\vir}
\prod_{j=1}^n \ev_j^*(u_{a_j}) \psi_j^{a_j}\prod_{i=1}^m\ev_{i+n}^*(\bt).
\end{eqnarray*}

We define the total descendent potential of $\bP^1$ to be
$$
D^{\bP^1,T}(\bu)=\exp (\sum_{n,g}\hbar^{g-1}F^{\bP^1,T}_{g,n}(\bu,0)).
$$
Consider the map $\pi:\Mbar_{g,n+m}(\bP^1,d)\to \Mbar_{g,n}$ which forgets the map to the target and the last $m$ marked points. Let $\bar\psi_i:=\pi^*(\psi_i)$ be the pull-backs of the classes $\psi_i, i=1,\cdots n$, from $\Mbar_{g,n}$. Then we can define
$$
\bar{F}^{\bP^1,T}_{g,n}(\bu,\bt) := \sum_{a_1,\ldots, a_n\in \bZ_{\geq 0}}\sum_{m=0}^\infty \sum_{d=0}^\infty
\frac{Q^d}{n!m!}\int_{[\Mbar_{g,n+m}(\bP^1,d)]^\vir}
\prod_{j=1}^n \ev_j^*(u_{a_j}) \bar\psi_j^{a_j}\prod_{i=1}^m\ev_{i+n}^*(\bt).
$$
Let the ancestor potential of $\bP^1$ be
$$A^{\bP^1,T}(\bu,\bt)=\exp (\sum_{n,g}\hbar^{g-1}\bar{F}^{\bP^1,T}_{g,n}(\bu,\bt)).$$

\subsection{Givental's formula for equivariant Gromov-Witten potential and
the A-model graph sum}
The quantization of the $\cS$-operator relates the ancestor potential and the descendent potential of $\bP^1$ via Givental's formula. Concretely, we have (see \cite{G01'})
$$D^{\bP^1,T}(\bu)=\exp(F^{\bP^1,T}_1)\hat{\cS}^{-1}A^{\bP^1,T}(\bu,\bt)$$
where $F^{\bP^1,T}_1$ denotes $\sum_{n}F^{\bP^1,T}_{1,n}(\bu,0)$ at $u_0=u, u_1=u_2=\cdots=0$ and $\hat{\cS}$ is the quantization \cite{G01'} of $\cS$. For our purpose, we need to describe a formula for a slightly different potential: $F^{\bP^1,T}_{g,n}(\bu,\bt)$---the descendent potential with arbitrary primary insertions.

Now we first describe a graph sum formula for the ancestor potential $A^{\bP^1,T}(\bu,\bt)$. Given a connected graph $\Ga$, we introduce the following notation.
\begin{enumerate}
\item $V(\Ga)$ is the set of vertices in $\Ga$.
\item $E(\Ga)$ is the set of edges in $\Ga$.
\item $H(\Ga)$ is the set of half edges in $\Gamma$.
\item $L^o(\Ga)$ is the set of ordinary leaves in $\Ga$.
\item $L^1(\Ga)$ is the set of dilaton leaves in $\Ga$.
\end{enumerate}

With the above notation, we introduce the following labels:
\begin{enumerate}
\item (genus) $g: V(\Ga)\to \bZ_{\geq 0}$.
\item (marking) $\beta: V(\Ga) \to \{1,2\}$. This induces
$\beta:L(\Ga)=L^o(\Ga)\cup L^1(\Ga)\to \{1,2\}$, as follows:
if $l\in L(\Ga)$ is a leaf attached to a vertex $v\in V(\Ga)$,
define $\beta(l)=\beta(v)$.
\item (height) $k: H(\Ga)\to \bZ_{\geq 0}$.
\end{enumerate}

Given an edge $e$, let $h_1(e),h_2(e)$ be the two half edges associated to $e$. The order of the two half edges does not affect the graph sum formula in this paper. Given a vertex $v\in V(\Ga)$, let $H(v)$ denote the set of half edges
emanating from $v$. The valency of the vertex $v$ is equal to the cardinality of the set $H(v)$: $\val(v)=|H(v)|$.
A labeled graph $\vGa=(\Ga,g,\beta,k)$ is {\em stable} if
$$
2g(v)-2 + \val(v) >0
$$
for all $v\in V(\Ga)$.

Let $\bGa(\bP^1)$ denote the set of all stable labeled graphs
$\vGa=(\Gamma,g,\beta,k)$. The genus of a stable labeled graph
$\vGa$ is defined to be
$$
g(\vGa):= \sum_{v\in V(\Ga)}g(v)  + |E(\Ga)|- |V(\Ga)|  +1
=\sum_{v\in V(\Ga)} (g(v)-1) + (\sum_{e\in E(\Gamma)} 1) +1.
$$
Define
$$
\bGa_{g,n}(\bP^1)=\{ \vGa=(\Gamma,g,\beta,k)\in \bGa(\bP^1): g(\vGa)=g, |L^o(\Ga)|=n\}.
$$
Given $\alpha\in \{1,2\}$, define
$$
\bu^\alpha(z) = \sum_{a\geq 0} u^\alpha_a z^a.
$$

We assign weights to leaves, edges, and vertices of a labeled graph $\vGa\in \bGa(\bP^1)$ as follows.
\begin{enumerate}
\item {\em Ordinary leaves.} To each ordinary leaf $l \in L^o(\Ga)$ with  $\beta(l)= \beta\in \{1,2\}$
and  $k(l)= k\in \bZ_{\geq 0}$, we assign:
$$
(\cL^{\bu})^\beta_k(l) = [z^k] (\sum_{\alpha=1,2}\frac{\bu^\alpha(z)}{\sqrt{\Delta^\alpha(q)}}
R_\alpha^{\,\ \beta}(-z) ).
$$
\item {\em Dilaton leaves.} To each dilaton leaf $l \in L^1(\Ga)$ with $\beta(l)=\beta \in \{1,2\}$
and $2\leq k(l)=k \in \bZ_{\geq 0}$, we assign
$$
(\cL^1)^\beta_k(l) = [z^{k-1}](-\sum_{\alpha=1,2}\frac{1}{\sqrt{\Delta^\alpha(q)}} R_\alpha^{\,\ \beta}(-z)).
$$

\item {\em Edges.} To an edge connecting a vertex marked by $\alpha\in \{1,2\}$ to a vertex
marked by $\beta\in \{1,2\}$ and with heights $k$ and $l$ at the corresponding half-edges, we assign
$$
\cE^{\alpha,\beta}_{k,l}(e) = [z^k w^l]
\Bigl(\frac{1}{z+w} (\delta_{\alpha,\beta}-\sum_{\gamma=1,2}
R_\gamma^{\,\ \alpha}(-z) R_\gamma^{\,\ \beta}(-w)\Bigr).
$$
\item {\em Vertices.} To a vertex $v$ with genus $g(v)=g\in \bZ_{\geq 0}$ and with
marking $\beta(v)=\beta$, with $n$ ordinary
leaves and half-edges attached to it with heights $k_1, ..., k_n \in \bZ_{\geq 0}$ and $m$ more
dilaton leaves with heights $k_{n+1}, \ldots, k_{n+m}\in \bZ_{\geq 0}$, we assign
$$
\big(\sqrt{\Delta^\beta(q)}\big)^{2g-2+n+m} \int_{\Mbar_{g,n+m}}\psi_1^{k_1} \cdots \psi_{n+m}^{k_{n+m}}.
$$
\end{enumerate}

We define the weight of a labeled graph $\vGa\in \bGa(\bP^1)$ to be
\begin{eqnarray*}
w(\vGa) &=& \prod_{v\in V(\Ga)} \big(\sqrt{\Delta^{\beta(v)}(q)}\big)^{2g(v)-2+\val(v)} \langle \prod_{h\in H(v)} \tau_{k(h)}\rangle_{g(v)}
\prod_{e\in E(\Ga)} \cE^{\beta(v_1(e)),\beta(v_2(e))}_{k(h_1(e)),k(h_2(e))}(e)\\
&& \cdot \prod_{l\in L^o(\Ga)}(\cL^{\bu})^{\beta(l)}_{k(l)}(l)
\prod_{l\in L^1(\Ga)}(\cL^1)^{\beta(l)}_{k(l)}(l).
\end{eqnarray*}
Then
$$
\log(A^{\bP^1,T}(\bu,\bt)) =
\sum_{\vGa\in \bGa(\bP^1)}  \frac{ \hbar^{g(\vGa)-1} w(\vGa)}{|\Aut(\vGa)|}
= \sum_{g\geq 0}\hbar^{g-1} \sum_{n\geq 0}\sum_{\vGa\in \bGa_{g,n}(\bP^1)}\frac{w(\vGa)}{|\Aut(\vGa)|}.
$$

Now we describe a graph sum formula for $F^{\bP^1,T}_{g,n}(\bu,\bt)$---the descendant potential with arbitrary primary insertions. For $\alpha=1,2$, let
$$
\hat{\phi}_\alpha(q):= \sqrt{\Delta^\alpha(q)}\phi_\alpha(q).
$$
Then $\hat{\phi}_1(q)$, $\hat{\phi}_2(q)$ is the normalized canonical basis
of $QH_T^*(\bP^1;\bC)$, the $T$-equivariant quantum cohomology of $\bP^1$. Define
$$
{S}^{\hualpha}_{\, \ \hubeta}(z) := (\hat{\phi}_\alpha(q), \cS(\hat{\phi}_\beta(q))).
$$
Then $({S}^{\underline{\hat \alpha}}_{\, \ \underline{\hat{\beta}}}(z))$ is the matrix of the $\cS$-operator with
respect to the ordered basis $(\hat{\phi}_1(q),\hat{\phi}_2(q))$:
\begin{equation}\label{eqn:mathringS}
\cS(\hat{\phi}_\beta(q))=\sum_{\alpha=1}^2 \hat{\phi}_\alpha(q) {S}^\hualpha_{\,\ \hubeta}(z).
\end{equation}
We define a new weight of the ordinary leaves:
\begin{enumerate}
\item[(1)'] {\em Ordinary leaves.}
To each ordinary leaf $l \in L^o(\Ga)$ with  $\beta(l)= \beta\in \{1,2\}$
and  $k(l)= k\in \bZ_{\geq 0}$, we assign:
$$
(\mathring{\cL}^{\bu})^\beta_k(l) = [z^k] (\sum_{\alpha,\gamma=1,2} \frac{\bu^\alpha(z)}{\sqrt{\Delta^\alpha(q)}}
{S}^{\hugamma}_{\,\ \hualpha}(z) R(-z)_\gamma^{\,\ \beta} ).
$$
\end{enumerate}
We define a new weight of a labeled graph $\vGa\in \bGa(\bP^1)$ to be
\begin{eqnarray*}
\mathring{w}(\vGa) &=& \prod_{v\in V(\Ga)} (\sqrt{\Delta^{\beta(v)}(q)})^{2g(v)-2+\val(v)} \langle \prod_{h\in H(v)} \tau_{k(h)}\rangle_{g(v)}
\prod_{e\in E(\Ga)} \cE^{\beta(v_1(e)),\beta(v_2(e))}_{k(h_1(e)),k(h_2(e))}(e)\\
&& \cdot \prod_{l\in L^o(\Ga)}(\mathring{\cL}^{\bu})^{\beta(l)}_{k(l)}(l)
\prod_{l\in L^1(\Ga)}(\cL^1)^{\beta(l)}_{k(l)}(l).
\end{eqnarray*}
Then
$$
\sum_{g\geq 0}\hbar^{g-1} \sum_{n\geq 0}F^{\bP^1,T}_{g,n}(\bu,\bt) =
\sum_{\vGa\in \bGa(\bP^1)}  \frac{ \hbar^{g(\vGa)-1} \mathring{w}(\vGa)}{|\Aut(\vGa)|}
= \sum_{g\geq 0}\hbar^{g-1} \sum_{n\geq 0}\sum_{\vGa\in \bGa_{g,n}(\bP^1)}\frac{\mathring{w}(\vGa)}{|\Aut(\vGa)|}.
$$

We can slightly generalize this graph sum formula to the case where we have $n$ \emph{ordered} variables $\bu_1,\cdots,\bu_n$ and $n$ \emph{ordered} ordinary leaves. Let
$$
\bu_j=\sum_{a\geq 0}(u_j)_a z^a
$$
and let
$$
F^{\bP^1,T}_{g,n}(\bu_1,\cdots,\bu_n,\bt) := \sum_{a_1,\ldots, a_n\in \bZ_{\geq 0}}\sum_{m=0}^\infty \sum_{d=0}^\infty
\frac{1}{m!}\int_{[\Mbar_{g,n+m}(\bP^1,d)]^\vir}
\prod_{j=1}^n \ev_j^*((u_j)_{a_j}) \psi_j^{a_j}\prod_{i=1}^m\ev_{i+n}^*(\bt).
$$
Define the set of graphs $\tilde{\bGa}_{g,n}(\bP^1)$ as the definition of $\bGa_{g,n}(\bP^1)$ except that the $n$ ordinary leaves are \emph{ordered}. Let $\{l_1,\cdots,l_n\}$ be the ordinary leaves in $\Gamma\in \tilde{\bGa}_{g,n}(\bP^1)$ and for $j=1,\cdots,n$ let
$$
(\mathring{\cL}^{\bu_j})^\beta_k(l_j) = [z^k] (\sum_{\alpha,\gamma=1,2} \frac{\bu^\alpha_j(z)}{\sqrt{\Delta^\alpha(q)}}
{S}^{\hugamma}_{\,\ \hualpha}(z) R(-z)_\gamma^{\,\ \beta} ).
$$
Define the weight
\begin{eqnarray*}
\mathring{w}(\vGa) &=& \prod_{v\in V(\Ga)} (\sqrt{\Delta^{\beta(v)}(q)})^{2g(v)-2+\val(v)} \langle \prod_{h\in H(v)} \tau_{k(h)}\rangle_{g(v)}
\prod_{e\in E(\Ga)} \cE^{\beta(v_1(e)),\beta(v_2(e))}_{k(h_1(e)),k(h_2(e))}(e)\\
&& \cdot \prod_{j=1}^{n}(\mathring{\cL}^{\bu_j})^{\beta(l_j)}_{k(l_j)}(l_j)
\prod_{l\in L^1(\Ga)}(\cL^1)^{\beta(l)}_{k(l)}(l).
\end{eqnarray*}
Then
$$
\sum_{g\geq 0}\hbar^{g-1} \sum_{n\geq 0}F^{\bP^1,T}_{g,n}(\bu_1,\cdots,\bu_n,\bt) =
\sum_{\vGa\in \tilde{\bGa}(\bP^1)}  \frac{ \hbar^{g(\vGa)-1} \mathring{w}(\vGa)}{|\Aut(\vGa)|}
= \sum_{g\geq 0}\hbar^{g-1} \sum_{n\geq 0}\sum_{\vGa\in \tilde{\bGa}_{g,n}(\bP^1)}\frac{\mathring{w}(\vGa)}{|\Aut(\vGa)|}.
$$

\section{B-model}

\subsection{The equivariant superpotential and the Frobenius structure of the Jacobian ring}
Let $Y$ be coordinates on $\bC^*$.
The $T$-equivariant superpotential $W_t^\w: \bC^*\to \bC$ is given by
\begin{eqnarray*}
W_t^\w(Y)&=& Y+ t_0  +\frac{q}{Y} + \w_1 \log Y + \w_2\log\frac{q}{Y},
\end{eqnarray*}
where $q=Qe^{t_1}$ and $Y = e^y$. In this section, we assume $\w_1-\w_2$ is a positive real number.
The Jacobian ring of $W^\w_t$ is
$$
\Jac(W_t^{\w}) \cong C[Y,Y^{-1},q,\w]/\langle \frac{\partial W_t^\w}{\partial y} \rangle = \bC[Y, Y^{-1},q,\w]/\langle Y-\frac{q}{Y} +\w_1-\w_2\rangle
$$
Let
$$
B := q\frac{\partial W_t^{\w}}{\partial q}  = \frac{q}{Y}+ \w_2.
$$
%
The Jacobian ring is isomorphic to $QH^*_T(\bP^1;\bC)$ if one identifies $B$ with $H$
$$
\Jac(W_t^{\w}) \cong \bC[B,q,\w]/\langle(B-\w_1)(B-\w_2)-q\rangle.
$$

The critical points of $W_t^\w$ are $P_1$, $P_2$, where
$$
P_\alpha = \frac{\w_2-\w_1+\Delta^\alpha(q)}{2},\quad \alpha=1,2.
$$
Endow a metric on $\Jac(W_q^{\w})$ by the residue pairing
$$
(f,g)=\sum_{\alpha=1}^2 \Res_{Y=P_\alpha} \frac{f(Y)g(Y)} {\frac{\partial W_t^\w}{\partial y}} \frac{dY}{Y}.
$$
By direct calculation, we have
$$(B,B)=\w_1+\w_2,\quad (B,\one)=(\one,B)=1,\quad (\one,\one)=0.$$
We denote $b_0=\one$, $b_1=B$ and $b^i$ by $(b^i, b_j)=\delta^i_j$.
These calculations show the following well-known fact.
\begin{proposition}
There is an isomorphism of Frobenius manifold
$$
QH^*_T(\bP^1;\bC)\otimes_{\bC[\w]}\bC[\w,\frac{1}{\w_1-\w_2}] \cong \Jac(W^\w_t)\otimes_{\bC[\w]}\bC[\w,\frac{1}{\w_1-\w_2}].
$$
\end{proposition}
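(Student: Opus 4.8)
The plan is to exhibit the isomorphism explicitly as the flat-coordinate identification $1\leftrightarrow\one$, $H\leftrightarrow B$, and then check that it respects all of the Frobenius-manifold data. Both sides are Frobenius manifolds over the base with coordinates $(t^0,t^1)$: on the A-side, $QH_T^*(\bP^1;\bC)$ with quantum product $\star$, the equivariant Poincar\'e pairing as metric, and unit $1$; on the B-side, the Saito--Landau--Ginzburg structure attached to $W_t^\w$ with primitive form $dY/Y$, whose underlying algebra is $\Jac(W_t^\w)$ with the residue pairing and unit $\one$. Both structures depend on $(t^0,t^1)$ only through $q=Qe^{t^1}$ together with the unit-direction translation by $t^0$, so the identification of bases is a diffeomorphism intertwining the two families, and it remains to match the fiberwise structure.

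First I would compare the rings. By Section~\ref{sec:QH} we have $QH_T^*(\bP^1;\bC)=\bC[H,\w,q]/\langle(H-\w_1)\star(H-\w_2)-q\rangle$, while the computation of $\Jac(W_t^\w)$ above gives $\Jac(W_t^\w)\cong\bC[B,q,\w]/\langle(B-\w_1)(B-\w_2)-q\rangle$; hence $1\mapsto\one$, $H\mapsto B$ extends to an algebra isomorphism, and after inverting $\w_1-\w_2$ to an isomorphism of the two semisimple algebras, carrying the idempotent $\phi_\alpha(q)$ to the local unit at the critical point $P_\alpha$ --- consistently with $H\star\phi_\alpha=\bigl((\w_1+\w_2+\Delta^\alpha(q))/2\bigr)\phi_\alpha$ on the A-side and $B|_{Y=P_\alpha}=(\w_1+\w_2+\Delta^\alpha(q))/2$ on the B-side. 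Next I would check that it is an isometry: the equivariant Poincar\'e pairing, evaluated by localization from $\int_{\bP^1}\phi_\alpha=1/\Delta^\alpha$, gives $(1,1)=0$, $(1,H)=1$, $(H,H)=\w_1+\w_2$, which match exactly the residue-pairing values $(\one,\one)=0$, $(\one,B)=1$, $(B,B)=\w_1+\w_2$ recorded above. Since both pairings are Frobenius for their respective products (so that $(a\star b,c)=(a,b\star c)$, and similarly for the residue pairing) and $\{1,H\}$, $\{\one,B\}$ generate, the ring isomorphism is automatically an isometry on the whole ring.

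It then remains to match the remaining data. Because the identification is by flat coordinates it intertwines the flat Levi-Civita connections of the two metrics, and together with the previous steps this matches multiplication, metric, and unit. The Euler vector fields agree once one observes that the conformal grading $\deg H=\deg\w_i=2$, $\deg q=4$ on the A-side corresponds to the standard grading of the Landau--Ginzburg model $W_t^\w$ with primitive form $dY/Y$; the genus-zero potentials then agree up to terms quadratic in $(t^0,t^1)$, which are irrelevant to the Frobenius structure. The main point that is not pure bookkeeping is the input that the one-variable Landau--Ginzburg model, equipped with the primitive form $dY/Y$, genuinely carries a Frobenius manifold structure whose flat coordinates are (up to affine change) $(t^0,t^1)$ and whose Euler field is as claimed; this is classical Saito theory in the rank-one case, and I would cite it rather than reprove it. Everything else is the residue and localization computations already displayed, which is why the proposition can be billed as a well-known fact.
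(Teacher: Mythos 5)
Your proposal is correct and follows essentially the same route as the paper: the paper's justification consists precisely of the ring identification $H\leftrightarrow B$ via the common presentation $\bC[X,q,\w]/\langle(X-\w_1)(X-\w_2)-q\rangle$ together with the residue-pairing computation $(\one,\one)=0$, $(\one,B)=1$, $(B,B)=\w_1+\w_2$ matching the equivariant Poincar\'e pairing, with the remaining Frobenius-manifold bookkeeping left as a ``well-known fact.'' Your additional remarks (extension of the isometry via the Frobenius property, flatness of $(t^0,t^1)$ on the B-side, citation of Saito theory) only make explicit what the paper leaves implicit.
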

We denote $\Jac(W^\w_t) \otimes_{\bC[\w]}\bC[\w,\frac{1}{\w_1-\w_2}]$ by $H_B$. The Dubrovin-Givental connection is denoted by $\nabla^B_{v}=z\partial_v + v\bullet$ on $\cH_B:=H_B \lp z\rp$.

\subsection{The B-model canonical coordinates} \label{sec:B-canonical}
The isomorphism of Frobenius structures automatically ensures their canonical coordinates are the same up to a permutation and constants. We fix the B-model canonical coordinates in this subsection by  the critical values of the superpotential $W^\w_t$, and find the constant difference to the A-model coordinates that we set up in earlier sections.

Let $C_t^\w =\{(x,y) \in \bC^2: x = W_t^\w(e^y) \}$ be the graph of the equivariant superpotential. It is a covering of $\bC^*$ given by $y \mapsto e^y$. Let $\bar\Si \cong \bP^1$ be the compactification of $\bC^*$ with $Y\in \bC^*\subset \bP^1$ as its coordinate. At each branch point $Y=P_\alpha$, $x$ and $y$ have the following expansion
\begin{align*}
x&=\check{u}^\alpha-\zeta_\alpha^2,\\
y&=\check{v}^\alpha-\sum_{k=1}^\infty h^\alpha_k(q) \zeta_\alpha^k,
\end{align*}
where $h^\alpha_1(q)=\sqrt{\frac{2}{\Delta^\alpha(q)}}$. Note that
we define $\zeta_\alpha$ by $\zeta_\alpha^2 = \check{u}^\alpha-x$, which differs
from the definition of $\zeta$ in \cite{E11, EO12} by a factor of $\sqrt{-1}$.

The critical values are
$$
\check u^\alpha=t^0+\w_\alpha t^1 +\Delta^\alpha(q)-\chi^\alpha\log\frac{\chi^\alpha+\Delta^\alpha(q)}{2}.
$$
Since
\begin{eqnarray*}
\frac{\partial \check{u}^\alpha}{\partial t^0} &=&  1,\\
\frac{\partial\check{u}^\alpha}{\partial t^1} &=& \frac{q}{P_\alpha} + \w_2 = \frac{\w_1+\w_2+\Delta^\alpha(q)}{2},
\end{eqnarray*}
we have
\begin{equation}\label{eqn:AB-du}
d\check{u}^\alpha = du^\alpha, \quad \alpha=1,2.
\end{equation}
Recall that $\lim_{q\to 0} \Delta^1(q) = \w_1-\w_2$, so in the large radius limit $q\to 0$, we have
\begin{eqnarray}
\lim_{q\to 0} (\check u^\alpha-t^0-\w_\alpha t^1)=\chi^\alpha -\chi^\alpha \log \chi^\alpha.
\label{eqn:u}
\end{eqnarray}
From \eqref{eqn:AB-du},\eqref{eqn:u},  and \eqref{eqn:u-initial}, we conclude that
$$
\check{u}^\alpha = u^\alpha + a_\alpha,\quad \alpha=1,2,
$$
where
\begin{eqnarray*}
a_\alpha=\chi^\alpha -\chi^\alpha \log \chi^\alpha.
\end{eqnarray*}

\subsection{The Liouville form and Bergman kernel}
On $C_t^\w$, let
$$
\lambda = x dy
$$
be the Liouville form on $\bC^2 = T^*\bC$. Then $d\lambda = dx\wedge dy$.
Let
$$
\Phi := \lambda|_{C_t^\w} = W^\w_t(e^y)dy = (e^y + t_0 + q e^{-y} + (\w_1-\w_2)y + \w_2 \log q) dy.
$$
Then $\Phi$ is a holomorphic 1-form on $\bC$. Recall that $q=Qe^{t^1}$ and $Y=e^y$.
Define
\begin{eqnarray*}
\Phi_0 &:=& \frac{\partial \Phi}{\partial t^0} = \frac{dY}{Y}, \\
\Phi_1 &:=& \frac{\partial \Phi}{\partial t^1} = (\frac{q}{Y} +\w_2)\frac{dY}{Y}.
\end{eqnarray*}
Then $\Phi_0,\Phi_1$ descends to holomorphic 1-forms on $\bC^*$ which extends to
meromorphic 1-forms on $\bP^1$. We have
\begin{itemize}
\item $\Phi_0$  has simple poles at $Y=0$ and $Y=\infty$, and
$$
\Res_{Y\to 0}\Phi_0=1,\quad \Res_{Y\to \infty}\Phi_1=-1.
$$
\item $\Phi_1-\w_2\Phi_0 = -q d(Y^{-1})$ is an exact 1-form.
\end{itemize}

Let $B(p_1,p_2)$ be the fundamental normalized differential of the second kind on
$\bar{\Si}$ (see e.g. \cite{Fay}). It is also called the Bergman kernel in \cite{EO07,EO12}. In this simple case $\bar\Si\cong \bP^1$, we have
$$
B(Y_1,Y_2)=\frac{dY_1\otimes dY_2}{(Y_1-Y_2)^2}.
$$

\subsection{Differentials of the second kind}

Following \cite{E11, EO12}, given $\alpha=1,2$ and $d\in \bZ_{\geq 0}$, define
$$
d\xi_{\alpha,d}(p):= (2d-1)!! 2^{-d}\Res_{p'\to P_\alpha}
B(p,p')(\sqrt{-1}\zeta_\alpha)^{-2d-1}.
$$
Then $d\xi_{\alpha,d}$ satisfies the following properties.
\begin{enumerate}
\item $d\xi_{\alpha,d}$ is a meromorphic 1-form on $\bP^1$ with
a single pole of order $2d+2$ at $P_\alpha$.
\item In local coordinate $\zeta_\alpha$ near $P_\alpha$,
$$
d\xi_{\alpha,d} = \Big( \frac{-(2d+1)!!}{2^d \sqrt{-1}^{2d+1}\zeta_\alpha^{2d+2}}
+ f(\zeta_\alpha)\Big) d\zeta_\alpha,
$$
where $f(\zeta_\alpha)$ is analytic around $P_\alpha$.
The residue of $d\xi_{\alpha,d}$ at $P_\alpha$ is zero,
so $d\xi_{\alpha,d}$ is a differential of the second kind.
\end{enumerate}
The meromorphic 1-form $d\xi_{\alpha,d}$ is characterized by the above
properties; $d\xi_{\alpha,d}$ can be viewed as a section in
$H^0(\bP^1, \omega_{\bP^1}((2d+2) P_\alpha) )$.
In particular, $d\xi_{\alpha,0}$ is
$$
d\xi_{\alpha,0} = \frac{1}{\sqrt{-1}}\sqrt{\frac{2}{\Delta^\alpha(q)}} d(\frac{P_\alpha}{Y-P_
\alpha}).
$$
Then we have
\begin{eqnarray*}
d(\frac{\Phi_0}{dW}) &=& d (\frac{Y}{(Y-P_1)(Y-P_2)}) = \frac{1}{P_1-P_2} d(\frac{P_1}{Y-P_1} - \frac{P_2}{Y-P_2}) \\
&=& \frac{1}{\sqrt{-1}}\frac{1}{\sqrt{2\Delta^1(q)}} d\xi_{1,0} +\frac{1}{\sqrt{-1}}\frac{1}{\sqrt{2\Delta^2(q)}} d\xi_{2,0} \\
&=& \frac{1}{\sqrt{-2}} \sum_{\alpha=1}^2 \Psi_0^{\,\ \alpha}d\xi_{\alpha,0},\\
d(\frac{\Phi_1}{dW}) &=& d (\frac{q+\w_2 Y}{(Y-P_1)(Y-P_2)})  \\
&=&  \frac{1}{P_1-P_2}d(\frac{q+ P_1\w_2}{Y-P_1} - \frac{q+ P_2\w_2}{Y-P_2}) \\
&=& \frac{1}{\sqrt{-1}}\frac{1}{\Delta^1(q)}\Big( \sqrt{\frac{\Delta^1(q)}{2}} (\frac{q}{P_1} +\w_2)d\xi_{1,0} -
\sqrt{\frac{\Delta^2(q)}{2}}(\frac{q}{P_2}+\w_2)d\xi_{2,0}\Big)\\
&=& \frac{1}{2\sqrt{-2}}\Big((\sqrt{\Delta^1(q)} + \frac{\w_1+\w_2}{\sqrt{\Delta^1(q)}}) d\xi_{1,0}
+ \frac{1}{2\sqrt{2}}(\sqrt{\Delta^2(q)}+ \frac{\w_1+\w_2}{\sqrt{\Delta^2(q)}} ) d\xi_{2,0}\big)\\
&=& \frac{1}{\sqrt{-2}}\sum_{\alpha=1}^2\Psi_1^{\,\ \alpha} d\xi_{\alpha,0}.
\end{eqnarray*}
We have
\begin{equation}
\left(\begin{array}{c}
d(\frac{\Phi_0}{dW}) \\ d(\frac{\Phi_1}{dW}) \end{array}\right)
=\frac{1}{\sqrt{-2}}\Psi\left(\begin{array}{c} d\xi_{1,0}\\d\xi_{2,0}\end{array}\right),\quad
\sqrt{-2} \Psi^{-1} \left(\begin{array}{c}
d(\frac{\Phi_0}{dW}) \\ d(\frac{\Phi_1}{dW}) \end{array}\right)
=\left(\begin{array}{c} d\xi_{1,0}\\d\xi_{2,0}\end{array}\right).
\label{eqn:B-model-Psi}
\end{equation}

\subsection{Oscillating integrals and the B-model $R$-matrix}
For $\alpha,\beta\in \{1,2\}$, $i\in \{0,1\}$ and $z>0$, define
$$
\check{S}_i^{\,\ \balpha}(z)   :=  \int_{y\in \gamma_\alpha} e^\frac{W_q^\w(Y)}{z} \Phi_i
= -z \int_{y\in \gamma_\alpha} e^\frac{W_q^\w(Y)}{z}d(\frac{\Phi_i}{dW}),
$$
where $\gamma_\alpha$ is the Lefschetz thimble going through $P_\alpha$, such that $W^\w_q(Y)\to -\infty$ near its ends.
It is straightforward to check that $\sum_{i=0}^1 b^i \check{S}_i^{\ \balpha}$ is a solution to the quantum differential equation $\nabla^B f=0$ for $\alpha=1,2$. We quote the following theorem
\begin{theorem}[\cite{D93,G97,G01}]
\label{thm:factorize-S}
Near a semi-simple point on a Frobenius manifold of dimension $n$, there is a fundamental solution $S$ to the quantum differential equation satisfying the following properties
\begin{enumerate}
\item $S$ has the following form

$$
S=\Psi R(z) e^{U/z},
$$
where $R(z)$ is matrix of formal power series in $z$, and $U=\mathrm{diag}(u^1,\dots,u^n)$ is a matrix formed by canonical coordinates.
\item If $S$ is unitary under the pairing of the Frobenius structure, then $R(z)$ is unique up to a right multiplication of $e^{\sum_{i=1}^\infty {A_{2i-1} z^{2i-1}}}$ where $A_k$ are constant diagonal matrices.
\end{enumerate}
\end{theorem}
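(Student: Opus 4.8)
This is the classical Dubrovin--Givental normal form for the Dubrovin connection at a semisimple point, so the plan is to reproduce that argument; in our situation it applies to $H_B$ because $W_t^\w$ has two nondegenerate critical points $P_1,P_2$ with distinct critical values $\check u^1\neq\check u^2$ (for $\w_1-\w_2$ a positive real and generic $q$). First I would pass to the normalized canonical frame. In the flat frame $\{b_0,b_1\}$ the metric is constant and $\nabla^B$ has the form $z\,d + \mathcal{C}$ with $\mathcal{C}_i$ the matrix of $b_i\bullet$; conjugating by the matrix $\Psi$ that carries the flat frame to the normalized canonical frame $\{\hat\phi_\alpha(q)\}$ makes the metric the identity and turns multiplication by $\partial_{u^\alpha}$ into the coordinate projection $E_\alpha$ (the diagonal matrix unit). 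Writing $S=\Psi S'$, the quantum differential equation $\nabla^B f=0$ becomes, in canonical coordinates, a compatible linear system of the schematic shape $z\,\partial_{u^\alpha}S' = -\big(E_\alpha + z\,V_\alpha\big)S'$ with $V_\alpha=\Psi^{-1}\partial_{u^\alpha}\Psi$, which makes the Ansatz $S'=R(z)e^{U/z}$, $U=\mathrm{diag}(u^1,\dots,u^n)$, the natural one.

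Next I would substitute $R(z)=I+\sum_{k\ge 1}R_kz^k$ into this system and read off the recursion order by order in $z$. The order-$z^k$ terms split into two pieces: an \emph{off-diagonal} equation of the form $[U,R_k]=(\text{a first-order differential expression in }R_{k-1})$, which at a semisimple point (distinct $u^\alpha$) determines the off-diagonal part of $R_k$ uniquely from $R_{k-1}$ because $\mathrm{ad}_U$ is invertible on off-diagonal matrices; and a \emph{diagonal} equation expressing $d\big(\mathrm{diag}\,R_k\big)$ in terms of already-known data. Existence of $R(z)$ then amounts to integrating the latter, which is possible because the relevant $1$-form is closed — this closedness is exactly the flatness (zero curvature) of $\nabla^B$, i.e.\ the statement that the mixed partials $\partial_{u^\alpha}\partial_{u^\beta}$ of the original system commute. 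The integration constant at each order is a constant diagonal matrix, and this is the only indeterminacy in building $R(z)$; this proves part (1).

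Finally, for part (2), I would impose unitarity of $S$ under the Frobenius pairing. Since $\Psi$ is already orthogonal for the normalized pairing and $e^{U/z}$ is a fixed exponential, unitarity of $S$ is equivalent to $R(z)^{T}R(-z)=I$. Factor $R(z)=R^{\mathrm{od}}(z)\,\exp D(z)$, where $R^{\mathrm{od}}(z)$ has the off-diagonal part fixed above and $D(z)$ is the free diagonal series; the relation $R(z)^{T}R(-z)=I$ forces the even part of $D(z)$ to vanish, leaving precisely the freedom $\exp\!\big(\sum_{i\ge 1}A_{2i-1}z^{2i-1}\big)$ with $A_{2i-1}$ constant diagonal, which is the asserted ambiguity. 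Two unitary solutions therefore differ on the right by such an exponential, and conversely any such right multiplication preserves all stated properties. The step I expect to be the real work — and the only nonformal input — is verifying the consistency of the recursion: that the off-diagonal determination, the diagonal integration, and compatibility across the different $u^\alpha$-directions never conflict. That is where semisimplicity (invertibility of $\mathrm{ad}_U$) and the flatness of the Dubrovin--Givental connection are used; everything else is bookkeeping.
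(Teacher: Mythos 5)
The paper does not prove Theorem \ref{thm:factorize-S} at all: it is explicitly quoted from Dubrovin and Givental (\cite{D93,G97,G01}), so there is no in-paper argument to compare against. Your proposal is a correct reconstruction of the standard argument from those references, and it is consistent with the way the paper later uses the result (in Section 4.1 the recursion is recalled exactly in the form $(d+\Psi^{-1}d\Psi)R_n=[dU,R_{n+1}]$, which is the order-by-order system you derive). Two small imprecisions are worth fixing but are not gaps. First, the off-diagonal part of $R_k$ is determined by the commutator with $dU$ (equivalently, direction by direction, with the idempotent projections $E_\alpha$), not with $U$ itself: the entries are killed or recovered through the factors $du^\alpha-du^\beta$, whose nonvanishing is exactly what semisimplicity gives you; invertibility of $\mathrm{ad}_U$ on off-diagonal matrices would instead require the \emph{values} $u^\alpha\neq u^\beta$, which is not the relevant condition. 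Second, in part (2) the factorization $R=R^{\mathrm{od}}(z)\exp D(z)$ is loose, since the diagonal integration constant at order $k$ feeds into the off-diagonal entries at order $k+1$; the cleaner statement is that the recursion is preserved precisely under right multiplication $R\mapsto R\,M(z)$ by a constant diagonal series $M(z)=I+O(z)$ (because $M$ commutes with the diagonal matrices $E_\alpha$, or equivalently because $e^{U/z}M e^{-U/z}=M$ while any off-diagonal constant matrix would produce the non-formal factors $e^{(u^\alpha-u^\beta)/z}$), and then unitarity of both solutions forces $M(z)M(-z)=I$, i.e.\ $M=\exp\bigl(\sum_{i\geq 1}A_{2i-1}z^{2i-1}\bigr)$ with $A_{2i-1}$ constant diagonal, which is the asserted ambiguity.
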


\begin{remark}\label{A-model S} For equivariant Gromov-Witten theory of $\bP^1$,
the fundamental solution $S$ in Theorem \ref{thm:factorize-S} is viewed as a matrix with entries in
$\bC[\w,\frac{1}{\w_1-\w_2}]((z))[[q,t^0,t^1]]$. We choose the canonical coordinates $\{u^\alpha(t)\}$ such that
there is no constant term by Equation \eqref{eqn:u-initial}.
Then if we fix the powers of $q,t^0$ and $t^1$, only finitely many terms in the expansion of $e^{U/z}$ contribute. So the multiplication $\Psi R(z)e^{U/z}$ is well defined and the result matrix indeed has entries in
$\bC[\w,\frac{1}{\w_1-\w_2}]((z))[[q,t^0,t^1]]$.
\end{remark}

\begin{remark}
For a general abstract semi-simple Frobenius manifold defined over a ring $A$, the expression $S=\Psi R(z) e^{U/z}$ in Theorem \ref{thm:factorize-S} can be understood in the following way. We consider the free module $M=\langle e^{u^1/z}\rangle\oplus\cdots\oplus \langle e^{u^n/z}\rangle$ over the ring $A((z))[[t^1,\cdots,t^n]]$ where $t^1,\cdots,t^n$ are the flat coordinates of the Frobenius manifold. We formally define the differential $de^{u^i/z}=e^{u^i/z}\frac{du^i}{z}$ and we extend the differential to $M$ by the product rule. Then we have a map $d:M\to Mdt^1\oplus\cdots\oplus Mdt^n$. We consider the fundamental solution $S=\Psi R(z) e^{U/z}$ as a matrix with entries in $M$. The meaning that $S$ satisfies the quantum differential equation is understood by the above formal differential.

In our case, the multiplication in the A-model fundamental solution $S=\Psi R(z) e^{U/z}$ is formal in $z$ as in Remark \ref{A-model S}. On B-model side, we use the stationary phase expansion to obtain a product of the form $\Psi R(z) e^{U/z}$. The multiplications $\Psi R(z) e^{U/z}$ on both A-model and B-model can be viewed as matrices with entries in $M$. And their differentials are obvious the same with the formal differential above.
\end{remark}

We repeat the argument in Givental \cite{G01'} and state it as the following fact.
\begin{proposition}
The fundamental solution matrix $\{\frac{\check{S}_i^{\ \balpha}}{\sqrt{-2\pi z}}\}$ has the following asymptotic expansion where $\check R(z)$ is a formal power series in $z$
$$
\frac{\check S_i^{\ \balpha}(z)}{\sqrt{-2\pi z}}\sim \sum_{\gamma=1}^2 \Psi_i^{\ \gamma}{\check R}_{\gamma}^{\ \alpha}(z) e^{\frac{\check u^\alpha}{z}}.
$$
\end{proposition}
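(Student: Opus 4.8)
The plan is to derive the stated expansion by applying the classical stationary-phase (steepest descent) analysis to the oscillating integrals $\check S_i^{\,\ \balpha}(z)=\int_{y\in\gamma_\alpha}e^{W_t^\w(Y)/z}\Phi_i$, following Givental \cite{G01'}, and to read off the matrix $\Psi$ from the leading term using \eqref{eqn:B-model-Psi}. Since $\w_1-\w_2$ is a positive real number and $z>0$, the restriction of $W_t^\w$ to the Lefschetz thimble $\gamma_\alpha$ has a unique nondegenerate critical point, at $Y=P_\alpha$, with critical value $\check u^\alpha$, and tends to $-\infty$ at the two ends of $\gamma_\alpha$. Hence each $\check S_i^{\,\ \balpha}(z)$ converges, and as $z\to 0^+$ it has a complete asymptotic expansion of the form $\sqrt z\,e^{\check u^\alpha/z}\cdot(\text{power series in }z)$.

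First I would reduce the matrix statement to the asymptotics of two scalar integrals. Integration by parts (the boundary contribution vanishes because $e^{W_t^\w/z}\to 0$ at the ends of $\gamma_\alpha$) gives $\check S_i^{\,\ \balpha}(z)=-z\int_{\gamma_\alpha}e^{W_t^\w(Y)/z}\,d\bigl(\tfrac{\Phi_i}{dW}\bigr)$; applying $\Psi^{-1}$ and invoking \eqref{eqn:B-model-Psi},
$$
\Psi^{-1}\begin{pmatrix}\check S_0^{\,\ \balpha}(z)\\ \check S_1^{\,\ \balpha}(z)\end{pmatrix}
=\frac{-z}{\sqrt{-2}}\begin{pmatrix}\int_{\gamma_\alpha}e^{W_t^\w(Y)/z}\,d\xi_{1,0}\\ \int_{\gamma_\alpha}e^{W_t^\w(Y)/z}\,d\xi_{2,0}\end{pmatrix}.
$$
Thus it suffices to prove that $\check R_\gamma^{\,\ \alpha}(z):=\dfrac{-z}{\sqrt{-2}\,\sqrt{-2\pi z}}\,e^{-\check u^\alpha/z}\int_{\gamma_\alpha}e^{W_t^\w(Y)/z}\,d\xi_{\gamma,0}$ is a formal power series in $z$ with $\check R_\gamma^{\,\ \alpha}(0)=\delta_{\gamma\alpha}$.

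Next I would analyze $\int_{\gamma_\alpha}e^{W_t^\w/z}\,d\xi_{\gamma,0}$ in the two cases, always passing first to the local coordinate $\zeta_\alpha$ with $x=W_t^\w=\check u^\alpha-\zeta_\alpha^2$, which puts the saddle in the normal form $e^{-\zeta_\alpha^2/z}$, and then using the Gaussian moments $\int e^{-\zeta^2/z}\zeta^{2m}\,d\zeta=\sqrt{\pi z}\,\tfrac{(2m-1)!!}{2^m}z^m$ (the odd moments vanish by parity, once the $\sqrt{-1}$-twist in the definition of $\zeta_\alpha$ has been absorbed into the orientation of the contour). When $\gamma\ne\alpha$, $d\xi_{\gamma,0}$ is holomorphic near $P_\alpha$ (its only pole being at $P_\gamma$), so expanding $d\xi_{\gamma,0}=\bigl(\sum_{k\ge 0}c_k\zeta_\alpha^k\bigr)d\zeta_\alpha$ yields an asymptotic series of the shape $e^{\check u^\alpha/z}\sqrt{\pi z}\sum_{m\ge 0}c_{2m}z^m$, so that $\check R_\gamma^{\,\ \alpha}(z)=O(z)$; in particular the off-diagonal entries vanish at $z=0$. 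When $\gamma=\alpha$, the form $d\xi_{\alpha,0}$ has a double pole located precisely at the saddle $P_\alpha\in\gamma_\alpha$; this I would resolve by writing $d\xi_{\alpha,0}=dg_\alpha$ with $g_\alpha=\tfrac{1}{\sqrt{-1}}\sqrt{\tfrac{2}{\Delta^\alpha(q)}}\,\tfrac{P_\alpha}{Y-P_\alpha}$ and integrating by parts once more: since $\tfrac{dW_t^\w}{dY}=\tfrac{(Y-P_1)(Y-P_2)}{Y^2}$, the factor $Y-P_\alpha$ cancels the pole of $g_\alpha$, so $g_\alpha\tfrac{dW_t^\w}{dY}$ is regular at $P_\alpha$ and
$$
\int_{\gamma_\alpha}e^{W_t^\w/z}\,d\xi_{\alpha,0}=-\frac1z\int_{\gamma_\alpha}\Bigl(g_\alpha\,\tfrac{dW_t^\w}{dY}\Bigr)e^{W_t^\w/z}\,dY
$$
is again a stationary-phase integral with holomorphic density, whose expansion has the shape $-z^{-1}e^{\check u^\alpha/z}\sqrt{\pi z}\sum_{m\ge 0}e_m z^m$, so that $\check R_\alpha^{\,\ \alpha}(z)$ is an honest power series in $z$. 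Evaluating the $m=0$ terms — the value of $g_\alpha\tfrac{dW_t^\w}{dY}$ at $P_\alpha$, together with $h_1^\alpha(q)=\sqrt{2/\Delta^\alpha(q)}$ — normalizes the leading coefficient to $\check R_\alpha^{\,\ \alpha}(0)=1$. Alternatively, once the asymptotic shape $\check S_i^{\,\ \balpha}/\sqrt{-2\pi z}\sim(\Psi\check R(z)e^{\check U/z})_i^{\,\ \alpha}$ has been established and one recalls that $\sum_i b^i\check S_i^{\,\ \balpha}$ solves $\nabla^B f=0$, Theorem \ref{thm:factorize-S} forces $\check R(z)$ to be a unitary formal power series of the normalized form $I+\sum_{k\ge 1}\check R_k z^k$, so in particular $\check R(0)=I$.

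The main obstacle is the bookkeeping at the saddle: extracting a clean asymptotic series despite the double pole of $d\xi_{\alpha,0}$ sitting exactly on the contour at $P_\alpha$, and tracking the branches of $\sqrt{-2\pi z}$, $\sqrt{\Delta^\alpha(q)}$ and the $\sqrt{-1}$-twist of $\zeta_\alpha$ consistently enough that the leading term comes out to be precisely $\Psi_i^{\,\ \alpha}$ with $\check R(0)=I$ rather than off by a nonzero constant. Everything else is the routine term-by-term integration of a Gaussian asymptotic expansion, together with the (easy) verification that the integrals converge at the ends of the thimbles.
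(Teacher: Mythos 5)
Your proposal is correct and follows essentially the same route as the paper: a stationary-phase expansion of $\check S_i^{\,\ \balpha}$ combined with \eqref{eqn:B-model-Psi} to identify $\check R_\gamma^{\,\ \alpha}(z)$ with (a constant multiple of) $\sqrt{z}\,e^{-\check u^\alpha/z}\int_{\gamma_\alpha}e^{W^\w_t/z}\,d\xi_{\gamma,0}$, which is precisely the paper's formula, and your extra bookkeeping at the double pole and the normalization $\check R(0)=I$ go beyond what the statement asks. The only caution concerns your optional aside: Theorem \ref{thm:factorize-S} does not \emph{force} unitarity of $\check R$ (it assumes unitarity to conclude uniqueness), and the paper instead proves unitarity separately via Eynard's Laplace transform of the Bergman kernel for later use in Proposition \ref{ABR}; but since unitarity is not part of the statement, this does not affect your argument.
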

\begin{proof}
By the stationary phase expansion,
$$
\check S_i^{\ \alpha}(z) \sim \sqrt{2\pi z} e^\frac{\check{u}^\alpha}{z} (1+a^{\ \alpha}_{i,1} z+ a^{\ \alpha}_{i,2} z^2 +\dots),
$$
it follows that $\{\check S_i^{\ \alpha}\}$ can be asymptotically expanded in the desired form (notice that $\Psi$ is a matrix in $z$-degree $0$). In particular, by \eqref{eqn:B-model-Psi}
$$
\check R_\beta^{\ \alpha}(z)\sim\frac{\sqrt{z}e^{-\frac{\check u^\alpha}{z}}}{2\sqrt{\pi }}\int_{\gamma_\alpha} e^{\frac{W^\w_t}{z}} d\xi_{\beta,0}.
$$

Following Eynard \cite{E11}, define Laplace transform of the Bergman kernel
\begin{equation}
\check{B}^{\alpha,\beta}(u,v,q) :=\frac{uv}{u+v} \delta_{\alpha,\beta}
+ \frac{\sqrt{uv}}{2\pi} e^{u\check u^\alpha + v \check u^\beta}\int_{p_1\in \gamma_\alpha}\int_{p_2\in \gamma_\beta}
B(p_1,p_2) e^{-ux(p_1) -v x(p_2)},
\end{equation}
where $\alpha,\beta\in \{1,2\}$. By \cite[Equation (B.9)]{E11},
\begin{equation}\label{eqn:B-ff}
                                                                                                           \check{B}^{\alpha,\beta}(u,v,q) = \frac{uv}{u+v}(\delta_{\alpha,\beta}
-\sum_{\gamma=1}^2 \check R^{\ \alpha}_\gamma(-\frac{1}{u})\check R^{\ \beta}_\gamma(-\frac{1}{v})).
\end{equation}
Setting $u=-v$, we conclude that $(\check R^*(\frac{1}{u}) \check R(-\frac{1}{u}))^{\alpha\beta}=\{\sum_{\gamma=1}^2 \check R^{\ \alpha}_\gamma(\frac{1}{u})\check R^{\ \beta}_\gamma(-\frac{1}{u})\}=\delta^{\alpha\beta}$. This shows $\check R$ is unitary.
\end{proof}

Following Iritani \cite{Ir09} (with slight modification), we introduce the following definition.
\begin{definition}[equivariant K-theoretic framing]\label{K-framing}
We define $\tch_z: K_T(\bP^1)\to H^*_T(\bP^1;\mathbb{Q})[[\frac{\w_1-\w_2}{z}]]$ by the following two properties
which uniquely characterize it.
\begin{itemize}
\item[(a)] $\tch_z$ is a homomorphism of additive groups:
$$
\tch_z(\cE_1\oplus \cE_2)=\tch_z(\cE_1) + \tch_z(\cE_2).
$$
\item[(b)] If $\cL$ is a $T$-equivariant line bundle on $\bP^1$ then
$$
\tch_z(\cL) = \exp\big(-\frac{2\pi\sqrt{-1}(c_1)_T(\cL)}{z}\big).
$$
\end{itemize}
For any $\cE\in K_T(\bP^1)$, we define the {\em K-theoretic framing} of $\cE$ by
$$
\kappa(\cE) :=(-z)^{1- \frac{(c_1)_T(T\bP^1)}{z}}\Gamma(1-\frac{(c_1)_T(T\bP^1)}{z}) \tch_z(\cE)
$$
where $(c_1)_T(T\bP^1)= 2H-\w_1-\w_2$.
\end{definition}
By localization, property (b) in the above definition is characterized by
$$
\iota_{p_\alpha}^*\kappa(\cO_{\bP^1}(l_1p_1+l_2 p_2))
=(-z)^{1-\frac{\chi^\alpha}{z}}\Gamma(1-\frac{\chi^\alpha}{z})e^{\frac{-2l_\alpha \pi\sqrt{-1}\chi^\alpha}{z}},\quad \alpha=1,2,
$$
where $\iota_{p_\alpha}:p_\alpha\to \bP^1$ is the inclusion map.

The following definition is motivated by \cite{F08, FLTZ12}.
\begin{definition}[equivariant SYZ T-dual]\label{SYZ-dual}
Let $\cL=\cO_{\bP^1}(l_1 p_1 + l_2 p_2)$ be an equivariant ample line bundle on $\bP^1$, where $l_1, l_2$
are integers such that $l_1+l_2>0$. We define the equivariant SYZ T-dual $\SYZ(\cL)$ of $\cL$ to be the
oriented graph in Figure 1 below.
\begin{figure}[h]
\begin{center}
\psfrag{infty}{\footnotesize $+\infty + (2 l_2-1)\pi i $}
\psfrag{-infty}{\footnotesize $-\infty + (-2 l_1-1)\pi i$}
\psfrag{L1}{\footnotesize $(-2l_1-1)\pi i $}
\psfrag{L2}{\footnotesize $(2l_2-1)\pi  i$}
\includegraphics[scale=0.5]{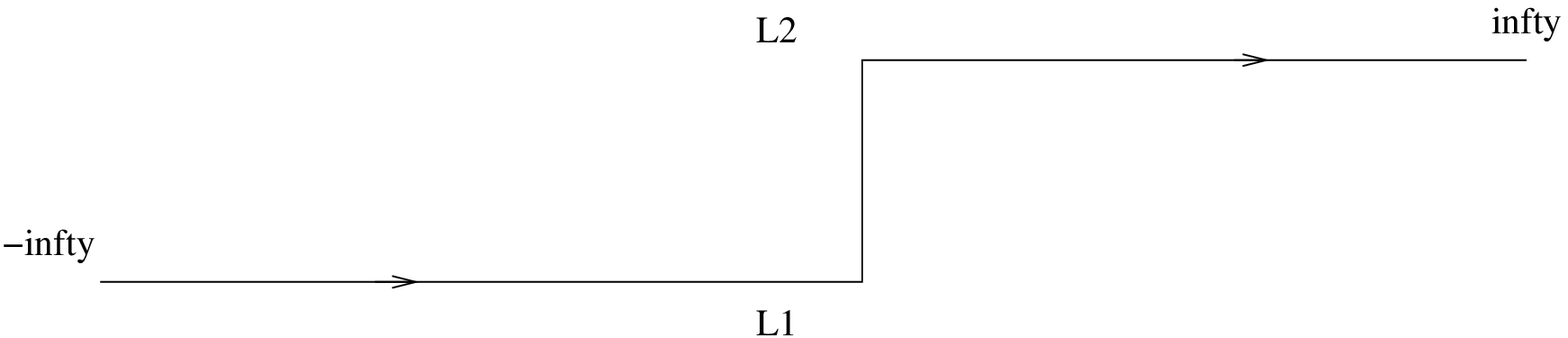}
\caption{$\SYZ(\cO_{\bP^1}(l_1 p_1+l_2 p_2))$ in $\bC$}
\end{center}
\end{figure}
We extend the definition additively to the equivariant K-theory group $K_T(\bP^1)$.
\end{definition}


\begin{figure}[h]
\begin{center}
\psfrag{pi}{\footnotesize $\pi i$}
\psfrag{-pi}{\footnotesize $-\pi i$}
\psfrag{infty}{\footnotesize $+\infty+\pi i$}
\psfrag{-infty}{\footnotesize $-\infty-\pi i$}
\psfrag{-inf}{\footnotesize $-\infty$}
\psfrag{exp}{\small $\exp$}
\psfrag{0}{\footnotesize $0$}
\psfrag{1}{\footnotesize $1$}
\psfrag{SYZT}{\small $\SYZ(\cO_{\bP^1}(p_2))$\textup{ in }$\bC$}
\psfrag{SYZ}{\small $\SYZ(\cO_{\bP^1}(1))$\textup{ in }$\bC^*$}
\includegraphics[scale=0.5]{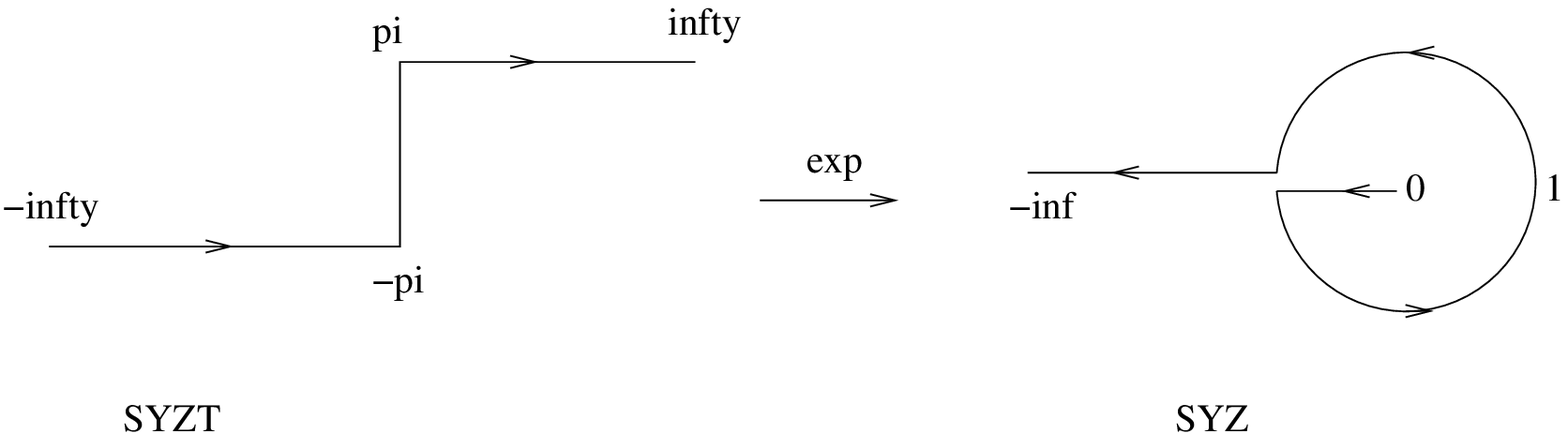}
\caption{The equivariant SYZ T-dual of $\cO_{\bP^1}(p_2)$ in $\bC$ and
the (non-equivariant) SYZ T-dual of $\cO_{\bP^1}(1)$ in $\bC^*$.}
\end{center}
\end{figure}

The following theorem gives a precise correspondence between
the B-model oscillatory integrals and the A-model 1-point descendant invariants.
\begin{theorem} \label{central-charge}
Suppose that $z, q, \w_1-\w_2\in (0,\infty)$. Then for any $\cL \in K_T(\bP^1)$,
\begin{equation}\label{eqn:central-charge}
\int_{y\in \SYZ(\cL)} e^{ {\frac{W^\w_t}{z}}} dy=  \llangle 1, \frac{\kappa(\cL)}{z-\psi} \rrangle^{\bP^1,T}_{0,2}.
\end{equation}
\begin{equation}\label{eqn:laplace-lambda}
\int_{y\in \SYZ(\cL)} e^{\frac{W^\w_t}{z}} ydx  =  - \llangle  \frac{\kappa(\cL)}{z-\psi} \rrangle^{\bP^1,T}_{0,1}.
\end{equation}
Here $dx=d(W^\w_t(y))$.
\end{theorem}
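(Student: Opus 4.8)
The plan is to verify both identities by explicit computation on each torus-fixed point $p_\alpha$, using localization on the A-model side and a careful analysis of the contour integral on the B-model side.

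First I would establish \eqref{eqn:central-charge}. On the A-model side, using the $\cS$-operator and the characterization $(J,a) = (1,\cS(a))$, together with the computation of $J^\alpha = J|_{p_\alpha}$ recorded in Section \ref{sec:A-S}, one sees that the right-hand side $\llangle 1, \frac{\kappa(\cL)}{z-\psi} \rrangle^{\bP^1,T}_{0,2}$ equals $\sum_{\alpha=1}^2 \frac{1}{\Delta^\alpha} \iota_{p_\alpha}^*\kappa(\cL) \cdot (J^\alpha$-type series$)$ after pairing; concretely, it is a sum over $\alpha$ of $\iota_{p_\alpha}^*\kappa(\cL)$ times a hypergeometric-type series in $q$ with coefficients in $\bC[\w,z^{-1}]$ built from $\prod_{m=1}^d(\chi^\alpha+mz)$. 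On the B-model side, I would parametrize $\SYZ(\cL)$ (the oriented graph in Figure 1, which for $\cL = \cO_{\bP^1}(l_1p_1+l_2p_2)$ runs from $-\infty+(-2l_1-1)\pi i$ to $+\infty+(2l_2-1)\pi i$), substitute $Y = e^y$, and compute $\int e^{W^\w_t/z}dy$ directly. The key is that $e^{W^\w_t/z} = e^{t_0/z} Y^{\w_1/z}(q/Y)^{\w_2/z} e^{(Y + q/Y)/z}$, so the integral becomes a product of $e^{t_0/z}(q^{\w_2/z})$ with an integral that, after the contour deformation encoded by the endpoints of $\SYZ(\cL)$, splits into contributions picking up the two saddle behaviors near $Y\to\infty$ and $Y\to 0$. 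Expanding $e^{q/(Yz)}$ in powers of $q$ and integrating term by term against $Y^{(\w_1-\w_2)/z - k}e^{Y/z}\,dY/Y$ produces Gamma-function factors $\Gamma(\text{something} - k + \chi^\alpha/z)$ which, via the reflection/multiplication identities for $\Gamma$, reproduce exactly the factors $\Gamma(1-\chi^\alpha/z)$ and $\prod_{m=1}^d(\chi^\alpha+mz)^{-1}$ appearing in $\kappa$ and in $J^\alpha$. Matching the $e^{-2l_\alpha\pi i \chi^\alpha/z}$ monodromy factors from the imaginary shifts $(2l_2-1)\pi i$, $(-2l_1-1)\pi i$ in the definition of $\SYZ(\cL)$ against $\iota_{p_\alpha}^*\kappa(\cL) = (-z)^{1-\chi^\alpha/z}\Gamma(1-\chi^\alpha/z)e^{-2l_\alpha\pi i\chi^\alpha/z}$ is the crux; this is where the specific normalization of the SYZ dual in Definition \ref{SYZ-dual} is forced.

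For \eqref{eqn:laplace-lambda}, I would integrate by parts: since $dx = dW^\w_t$, we have $z\,d(e^{W^\w_t/z}) = e^{W^\w_t/z}dx$, so $\int_{\SYZ(\cL)} e^{W^\w_t/z} y\,dx = z\int y\,d(e^{W^\w_t/z}) = z[y e^{W^\w_t/z}]_{\text{endpoints}} - z\int e^{W^\w_t/z}dy$. Because $W^\w_t \to -\infty$ at the ends of $\SYZ(\cL)$ (these are Lefschetz thimbles for $\mathrm{Re}(W^\w_t/z)$), the boundary term vanishes, giving $\int_{\SYZ(\cL)} e^{W^\w_t/z} y\,dx = -z\int_{\SYZ(\cL)} e^{W^\w_t/z}dy = -z\,\llangle 1, \frac{\kappa(\cL)}{z-\psi}\rrangle^{\bP^1,T}_{0,2}$ by the first identity. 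On the A-model side, the dilaton/string-type relation (or directly the definition of the unstable correlators in Section \ref{sec:TGW}) gives $z\,\llangle 1, \frac{\kappa(\cL)}{z-\psi}\rrangle^{\bP^1,T}_{0,2} = \llangle \frac{\kappa(\cL)}{z-\psi}\rrangle^{\bP^1,T}_{0,1}$, which yields \eqref{eqn:laplace-lambda}. So the second identity is essentially a formal consequence of the first.

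The main obstacle I expect is the bookkeeping in the B-model contour computation for \eqref{eqn:central-charge}: getting the branch of $Y^{\w_1/z}$ and $(q/Y)^{\w_2/z}$ consistent along the shifted contour, correctly identifying which end of $\SYZ(\cL)$ contributes the $p_1$ saddle versus the $p_2$ saddle, and assembling the Gamma-function gymnastics so that the hypergeometric series in $q$ matches $J^\alpha$ coefficient-by-coefficient in $q^d z^{-d}$. The convergence/analyticity hypotheses $z,q,\w_1-\w_2\in(0,\infty)$ are exactly what make the oscillatory integral absolutely convergent and the termwise expansion in $q$ legitimate, so I would invoke them at the start and work with genuine (not merely formal) integrals throughout, only passing to the formal series identity at the end.
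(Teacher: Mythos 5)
Your reduction of \eqref{eqn:laplace-lambda} to \eqref{eqn:central-charge} (integration by parts with vanishing boundary terms along the thimble, plus the string equation giving $z\llangle 1,\frac{\kappa(\cL)}{z-\psi}\rrangle_{0,2}^{\bP^1,T}=\llangle \frac{\kappa(\cL)}{z-\psi}\rrangle_{0,1}^{\bP^1,T}$) is exactly the paper's argument, and your A-model side (localization, $\iota_{p_\alpha}^*\kappa(\cL)$ times $J^\alpha$, the monodromy factors $e^{-2\pi i l_\alpha\chi^\alpha/z}$) also matches. The problem is the central B-model step.

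Expanding $e^{q/(Yz)}$ in powers of $q$ and integrating term by term over all of $\SYZ(\cL)$ is not legitimate, and the hypotheses $z,q,\w_1-\w_2\in(0,\infty)$ do not rescue it. After the expansion the $d$-th term is proportional to $\int Y^{\frac{\w_1-\w_2}{z}-d-1}e^{Y/z}\,dY$ along a contour one of whose ends is $Y\to 0$ (the end $y\to-\infty+(-2l_1-1)\pi i$); the super-exponential decay there comes entirely from the factor $e^{q/(Yz)}$ you have just expanded away, so every term with $d>\frac{\w_1-\w_2}{z}$ diverges. This is not mere bookkeeping: a termwise $q$-expansion of this kind could only ever produce $e^{t^0/z}q^{\w_2/z}\times(\text{a power series in }q)$, i.e.\ the $p_2$-branch, whereas the true answer is a combination of two branches, $I_{\frac{\chi^1}{z}}(\frac{2\sqrt q}{z})$ and $I_{\frac{\chi^2}{z}}(\frac{2\sqrt q}{z})$, carrying the incommensurable prefactors $q^{\pm\frac{\w_1-\w_2}{2z}}$; the divergences are precisely where the missing $p_1$-contribution hides. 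The paper avoids this by first rewriting the integral as $\int_{\gamma_{l_1,l_2}}\exp(-\frac{2\sqrt q}{z}\cosh y+\frac{\w_1-\w_2}{z}y)\,dy$ and evaluating it exactly via Lemma \ref{integral-bessel} (proved by decomposing $\gamma_{l_1,l_2}$ into the standard contours $\gamma_{0,0},\gamma_{0,1}$ with known $I_\alpha,K_\alpha$ representations), and only then comparing the standard power series of the modified Bessel functions with $J^1,J^2$. To repair your plan you would have to do something equivalent — e.g.\ split the contour and use a different expansion near each end (expanding $e^{Y/z}$ rather than $e^{q/(Yz)}$ near $Y=0$) so that both Bessel branches appear — and then actually carry out the Gamma-function matching, which you currently leave as an expected obstacle rather than an argument.
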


\begin{proof}
The left hand side of \eqref{eqn:central-charge} is
$$
\int_{y\in \SYZ(\cL)}e^{\frac{W_t^\w}{z}}dy = -\frac{1}{z} \int_{y\in \SYZ(\cL)} e^{\frac{W^\w_t}{z}} y d(W^\w_t).
$$
By the string equation, the right hand side of \eqref{eqn:central-charge} is
$$
\llangle 1, \frac{\kappa(\cL)}{z-\psi} \rrangle^{\bP^1,T}_{0,2}=\llangle \frac{\kappa(\cL)}{z(z-\psi)} \rrangle^{\bP^1,T}_{0,1}.
$$
So \eqref{eqn:central-charge} is equivalent to \eqref{eqn:laplace-lambda}.

It remains to prove \eqref{eqn:central-charge} for $\cL=\cO_{\bP^1}(l_1 p_1 + l_2 p_1)$, where $l_1+l_2\geq 0$.
We will express both hand sides of \eqref{eqn:central-charge} in terms of (modified) Bessel functions.
A brief review of Bessel functions is given in Appendix \ref{sec:bessel}. The equivariant quantum
differential equation of $\bP^1$ is related to the modified Bessel differential equation by a simple
transform (see Appendix \ref{sec:QDE}).

Let $\gamma_{l_1,l_2}$ be defined as in Appendix \ref{sec:bessel}.
\begin{eqnarray*}
&&  \int_{\SYZ(\cL)} e^{\frac{W_t^\w}{z}} dy
= \int_{\SYZ(\cL)}\exp\big(\frac{1}{z}(e^y + t^0 + q e^{-y} +\w_1 y +\w_2(t^1-y)) \big) dy \\
&=& e^{\frac{1}{z}(t^0 + \w_2 t^1)} \int_{\gamma_{\ell_1,\ell_2}}
\exp\big(\frac{1}{z}(e^{y-i\pi}+ qe^{i\pi-y} + (\w_1-\w_2)(y-\pi i))\big) dy \\
&=& (-1)^{\frac{\w_1-w_2}{z}} e^{\frac{t^0}{z} + \frac{\w_1+\w_2}{2z}t^1} \int_{\gamma_{\ell_1,\ell_2}}
\exp\big(-\frac{2\sqrt{q}}{z}\cosh(y-\frac{t^1}{2}) +\frac{\w_1-\w_2}{z}(y-\frac{t^1}{2})\big) dy \\
&=& (-1)^{\frac{\w_1-w_2}{z}} e^{\frac{t^0}{z} + \frac{\w_1+\w_2}{2z}t^1} \int_{\gamma_{\ell_1,\ell_2}}
\exp\big(-\frac{2\sqrt{q}}{z}\cosh(y) +\frac{\w_1-\w_2}{z} y \big) dy
\end{eqnarray*}
By Lemma \ref{integral-bessel},
\begin{eqnarray*}
&& \int_{\gamma_{l_1,l_2}}  \exp\big(-\frac{2\sqrt{q}}{z}\cosh(y) +\frac{\w_1-\w_2}{z} y \big) dy \\
&=& \frac{\pi}{\sin(\frac{\w_2-\w_1}{z}\pi)} \big(e^{-2\pi i l_1 \frac{\w_1-\w_2}{z}}I_{ \frac{\w_1-\w_2}{z} }(\frac{2\sqrt{q}}{z})
- e^{-2\pi i l_2 \frac{\w_2-\w_1}{z}}I_{\frac{\w_2-\w_1}{z}}(\frac{2\sqrt{q}}{z})\big)\\
&=& -\sum_{\alpha=1}^2  e^{-2\pi i l_\alpha \frac{\chi_\alpha}{z}}
\frac{\pi}{\sin(\frac{\chi^\alpha}{z} \pi)} I_{\frac{\chi^\alpha}{z}}(\frac{2\sqrt{q}}{z}).
\end{eqnarray*}
Therefore, the left hand side of \eqref{eqn:central-charge} is
$$
\int_{\SYZ(\cL)} e^{\frac{W_t^\w}{z}} dy = -
e^{\frac{t^0}{z} + \frac{\w_1+\w_2}{2z}t^1}
\sum_{\alpha=1}^2 e^{-(2l_\alpha-1)\pi i \frac{\chi_\alpha}{z}}
 \frac{\pi}{\sin(\frac{\chi^\alpha}{z} \pi)} I_{\frac{\chi^\alpha}{z}}(\frac{2\sqrt{q}}{z}).
$$

Recall from Section \ref{sec:A-S} that
$$
J^\alpha =\llangle 1, \frac{\phi^\alpha}{z-\psi}\rrangle^{\bP^1,T}_{0,2}
=\chi^\alpha \llangle 1, \frac{\phi_\alpha}{z-\psi}\rrangle^{\bP^1,T}_{0,2}.
$$
We have
\begin{eqnarray*}
J^\alpha &=& e^{(t^0+t^1\w_\alpha)/z}\sum_{d=0}^\infty\frac{q^d}{d! z^d} \frac{1}{\prod_{m=1}^d(\chi^\alpha+mz)}\\
&=&  e^{(t^0+t^1\w_\alpha)/z} \sum_{m=0}^\infty(\frac{2\sqrt{q}}{z})^{2m}
\frac{\Gamma(\frac{\chi^\alpha}{z}+1)}{m! \Gamma(\frac{\chi^\alpha}{z}+m+1)}\\
&=& e^{\frac{t^0}{z} +\frac{\w^1+\w^2}{2z} t^1} z^{\frac{\chi^\alpha}{z}}
\Gamma(\frac{\chi^\alpha}{z}+1)I_{\frac{\chi^\alpha}{z}}(\frac{2\sqrt{q}}{z})
\end{eqnarray*}

\begin{eqnarray*}
\kappa(\cL) &=& \sum_{\alpha=1}^2
(-z)^{\frac{\chi^\alpha}{-z}+1}\Gamma(1-\frac{\chi^\alpha}{z})e^{\frac{-2l_\alpha \pi\sqrt{-1}\chi^\alpha}{z}} \phi_\alpha.
\end{eqnarray*}
So the right hand side of \eqref{eqn:central-charge} is
\begin{eqnarray*}
\llangle 1, \frac{\kappa(\cL)}{z-\psi} \rrangle_{0,2}^{\bP^1,T}
&=& \sum_{\alpha=1}^2
(-z)^{\frac{\chi^\alpha}{-z}+1}\Gamma(1-\frac{\chi^\alpha}{z})e^{\frac{-2\pi i l_\alpha \chi^\alpha}{z}} \frac{J^\alpha}{\chi^\alpha} \\
&=& -e^{\frac{t^0}{z} +\frac{\w^1+\w^2}{2z} t^1}
\sum_{\alpha=1}^2 (-1)^{\frac{\chi^\alpha}{-z}} e^{\frac{-2\pi i l_\alpha \chi^\alpha}{z}}\frac{\pi}{\sin(\frac{\chi^\alpha}{z}\pi)}
I_{\frac{\chi^\alpha}{z}}(\frac{2\sqrt{q}}{z})\\
&=& -e^{\frac{t^0}{z} +\frac{\w^1+\w^2}{2z} t^1}
\sum_{\alpha=1}^2  e^{-(2 l_\alpha-1)\pi i \frac{\chi^\alpha}{z}}
\frac{\pi}{\sin(\frac{\chi^\alpha}{z}\pi )} I_{\frac{\chi^\alpha}{z}}(\frac{2\sqrt{q}}{z})
\end{eqnarray*}

\end{proof}

\begin{remark}
Definition \ref{K-framing} (equivariant K-theoretic framing) and Definition \ref{SYZ-dual} (equivariant SYZ T-dual) can
be extended to any projective toric manifold. In \cite{FLZ14}, the first author uses the mirror theorem \cite{G96, LLY}
and results in \cite{Ir09} to extend Proposition \ref{central-charge} to any semi-Fano projective toric manifold.
The left hand side of \eqref{eqn:central-charge} is known as the central charge of the Lagrangian brane $\SYZ(\cL)$.
\end{remark}


\begin{proposition}\label{ABR}
The A and B-model $R$-matrices are equal
$$
R_\beta^{\ \alpha}(z)= \check R_\beta^{\ \alpha}(z).
$$
\end{proposition}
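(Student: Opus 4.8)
The plan is to recognize both $R(z)$ and $\check R(z)$ as the unitary $R$-matrix, in the sense of Theorem~\ref{thm:factorize-S}, of a fundamental solution of the quantum differential equation of one and the same semisimple Frobenius structure, and then to remove the residual ambiguity that theorem leaves. The identification $QH^*_T(\bP^1;\bC)\otimes\bC[\w,\tfrac1{\w_1-\w_2}]\cong H_B$ ($H\leftrightarrow B$) identifies $\nabla^A$ with $\nabla^B$, and under it the canonical coordinates agree up to the constants $a_\alpha=\chi^\alpha-\chi^\alpha\log\chi^\alpha$, so $\check u^\alpha=u^\alpha+a_\alpha$. Now $\tilde S=(\tilde S_i^{\ \hbeta}(z))$ is a unitary fundamental solution with factorization $\Psi R(z)e^{U/z}$, while $(\check S_i^{\ \balpha}(z)/\sqrt{-2\pi z})$ is one with factorization $\Psi\check R(z)e^{\check U/z}$. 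Since $e^{\mathrm{diag}(a_\alpha)/z}$ is a constant, diagonal, unitary matrix, $\tilde S\,e^{\mathrm{diag}(a_\alpha)/z}=\Psi R(z)e^{\check U/z}$ is again a unitary fundamental solution, with the same $R$, now normalized by the canonical coordinates $\check U$. Thus $R$ and $\check R$ are the $R$-matrices of two unitary fundamental solutions with the same $\Psi$ and $\check U$, so by Theorem~\ref{thm:factorize-S}(2) (equivalently, the argument of Givental \cite{G01'} already invoked for $\check R$), $\check R(z)=R(z)e^{A(z)}$ with $A(z)=\sum_{i\ge1}A_{2i-1}z^{2i-1}$, the $A_{2i-1}$ diagonal and independent of $q,t^0,t^1$. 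It remains to show $A(z)=0$.

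To do this I would make the comparison explicit through Theorem~\ref{central-charge}. The Lefschetz thimble $\gamma_\alpha$ lies in the $\bC((z))$-span of the branes $\SYZ(\cL)$, $\cL\in K_T(\bP^1)$; choosing two line bundles whose $\SYZ$-cycles form a basis and combining \eqref{eqn:central-charge} with the Bessel-function computations in the proof of Theorem~\ref{central-charge} (the closed form $J^\alpha=e^{t^0/z+\frac{\w_1+\w_2}{2z}t^1}z^{\chi^\alpha/z}\Gamma(\tfrac{\chi^\alpha}{z}+1)I_{\chi^\alpha/z}(\tfrac{2\sqrt q}{z})$, the reflection formula, and the K-theoretic framing of Definition~\ref{K-framing}) produces a scalar $d_\alpha(z)$, with coefficients in $\bC[\w,\tfrac1{\w_1-\w_2}]$ and no $q,t$-dependence, such that $\int_{y\in\gamma_\alpha}e^{W^\w_t/z}\,dy=d_\alpha(z)\,J^\alpha$. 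Since $\Phi_i=\partial_{t^i}\Phi$ and $\Phi_0=dy$, differentiating in $t^i$ gives $\check S_i^{\ \balpha}(z)=d_\alpha(z)\,z\,\partial_{t^i}J^\alpha=d_\alpha(z)\sqrt{\chi^\alpha}\,S_i^{\ \halpha}(z)$, whence, by the definition $\tilde S_i^{\ \halpha}=S_i^{\ \halpha}\exp(-\sum_{n}\tfrac{B_{2n}}{2n(2n-1)}(\tfrac z{\chi^\alpha})^{2n-1})$, a relation $\check S_i^{\ \balpha}(z)=e_\alpha(z)\,\tilde S_i^{\ \halpha}(z)$ with $e_\alpha(z)=d_\alpha(z)\sqrt{\chi^\alpha}\exp(\sum_{n}\tfrac{B_{2n}}{2n(2n-1)}(\tfrac z{\chi^\alpha})^{2n-1})$. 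Feeding this into the two factorizations together with $\check u^\alpha=u^\alpha+a_\alpha$ yields $\check R_\gamma^{\ \alpha}(z)=c_\alpha(z)\,R_\gamma^{\ \alpha}(z)$, where $c_\alpha(z)=e_\alpha(z)e^{-a_\alpha/z}/\sqrt{-2\pi z}$; by the first paragraph $c_\alpha(z)$ is the $(\alpha,\alpha)$-entry of $e^{A(z)}$.

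Finally I would verify $c_\alpha(z)=1$ (equivalently $A(z)=0$) by Stirling's expansion. Substituting the explicit $d_\alpha(z)$ and $a_\alpha=\chi^\alpha(1-\log\chi^\alpha)$ into $c_\alpha(z)$ and using
\[
\log\Gamma(s+1)=\Big(s+\tfrac12\Big)\log s-s+\tfrac12\log(2\pi)+\sum_{n\ge1}\frac{B_{2n}}{2n(2n-1)}s^{1-2n},\qquad s=\frac{\chi^\alpha}{z},
\]
one checks that the elementary factors $z^{\chi^\alpha/z}$, $(\chi^\alpha)^{\chi^\alpha/z}$, $e^{-\chi^\alpha/z}$, $\sqrt{2\pi z}$ cancel against those coming from Stirling's formula and, crucially, that the Bernoulli exponential built into $\tilde S$ cancels the Bernoulli exponential in Stirling's expansion, leaving $c_\alpha(z)=1$. (Alternatively: $A(z)$ does not depend on $q$, hence equals its value at $q=0$; at $q=0$ one has $J^\alpha=e^{(t^0+\w_\alpha t^1)/z}$, and the same computation shows $\lim_{q\to0}\check R$ equals the stated limit $\lim_{q\to0}R_\alpha^{\ \beta}(z)=\delta_{\alpha\beta}\exp(-\sum_{n\ge1}\tfrac{B_{2n}}{2n(2n-1)}(\tfrac z{\chi^\beta})^{2n-1})$, forcing $e^{A(z)}=I$.) Hence $\check R=R$.

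The substantive difficulty is not conceptual but a matter of careful bookkeeping in the middle step: one must track the branches of the logarithm and the factors of $\sqrt{-1}$ (both those in $\sqrt{-2\pi z}$ and the convention $\zeta_\alpha^2=\check u^\alpha-x$), pin down precisely which integer combination of the oriented $\SYZ(\cL)$-graphs of Definition~\ref{SYZ-dual} represents the thimble $\gamma_\alpha$ — including the phases $e^{-2\pi i l_\alpha\chi^\alpha/z}$ — and keep the orientation of $\gamma_\alpha$ consistent with the stationary-phase normalization $\check S_i^{\ \balpha}(z)\sim\sqrt{2\pi z}\,e^{\check u^\alpha/z}(1+\cdots)$ used to define $\check R$. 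Once $d_\alpha(z)$ is correctly identified, the identity $c_\alpha(z)=1$ is a clean consequence of Stirling's formula, whose content is exactly that Givental's Bernoulli correction factor in $\tilde S$ reproduces the $\Gamma$-function asymptotics of the B-model oscillatory integral.
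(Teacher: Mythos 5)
Your first paragraph is exactly the paper's framing: both $R$ and $\check R$ arise from unitary fundamental solutions of the same quantum differential equation, so by Theorem \ref{thm:factorize-S}(2) they differ by a constant diagonal factor $e^{A(z)}$, and it remains to show $A(z)=0$; the constant shift $\check u^\alpha=u^\alpha+a_\alpha$ is handled correctly. The gap is in your middle step. The claimed exact identity $\int_{y\in\gamma_\alpha}e^{W^\w_t/z}\,dy=d_\alpha(z)\,J^\alpha$ with $d_\alpha(z)$ independent of $q,t$ is false. The thimble integral through $P_2$ has stationary-phase behaviour $\sqrt{2\pi z}\,e^{\check u^2/z}(1+\cdots)$ with $\check u^2\approx t^0+\w_2t^1-2\sqrt q$ for large $q$, i.e.\ it is exponentially \emph{decaying} in $2\sqrt q/z$ (a Bessel-$K$--type solution, $\tfrac{\pi}{\sin(\nu\pi)}(I_{-\nu}-I_\nu)$ with $\nu=\chi^1/z$), whereas $J^2\propto I_{\chi^2/z}(2\sqrt q/z)$ is exponentially \emph{growing}; no $q$-independent scalar can relate them. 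What is true (and what Theorem \ref{central-charge} together with Lemma \ref{integral-bessel} gives) is that each $\SYZ(\cL)$-integral, hence each thimble integral, is a $q,t$-independent linear combination of \emph{both} $J^1$ and $J^2$. But that corrected statement does not rescue the argument: $\check R$ is defined by the asymptotic expansion of the oscillatory integral, and for the subdominant thimble the naive term-by-term asymptotics of $I_{\chi^1/z}$ and $I_{\chi^2/z}$ cancel in the relevant combination, so the expansion you need lives entirely in exponentially small corrections (Stokes/connection data for Bessel functions), which cannot be extracted by applying Stirling to the coefficients $d_{\alpha\beta}(z)$ and the series of $J^\beta$. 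The paper's remark immediately after Proposition \ref{ABR} is precisely a warning about this point: $\tfrac{1}{\sqrt{-2\pi z}}\check S_i^{\ \balpha}e^{-\check u^\alpha/z}$ and $\tilde S_i^{\ \halpha}e^{-u^\alpha/z}$ are unequal as functions and agree only asymptotically, which is incompatible with your relation $\check S_i^{\ \balpha}=e_\alpha(z)\tilde S_i^{\ \halpha}$.

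Your parenthetical alternative (constancy of $A(z)$ in $q$, so evaluate at $q=0$ and match against $\lim_{q\to0}R$) is the paper's actual route, but as written it defers to ``the same computation,'' i.e.\ to the flawed identity above, so it is not yet a proof. To complete it one must compute the stationary-phase expansion of $\check S_i^{\ \balpha}$ directly in the limit $q\to0,\ t_0=0$: the paper does this by substituting $Y=-Tz$ for $\alpha=2$ (and $Y=-q/(Tz)$ for $\alpha=1$, together with the trick of using the thimble $(0,\infty)$ for $z<0$ and analytically continuing, since the expansion only depends on local data at the critical point), which turns the integral into a Gamma-function integral; Stirling's formula then produces exactly the Bernoulli exponential $\exp\big(-\sum_{n\ge1}\tfrac{B_{2n}}{2n(2n-1)}(\tfrac z{\chi^\alpha})^{2n-1}\big)$ matching $\lim_{q\to0}R$, and likewise for $\check S_1^{\ \balpha}$. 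Your final Stirling computation is the right closing step, but it must be fed by this direct $q\to 0$ stationary-phase evaluation rather than by the proposed exact proportionality to $J^\alpha$.
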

\begin{proof}
By the asymptotic decomposition theorem of the $S$-matrix (Theorem \ref{thm:factorize-S}),we only have to compare at the limit $q=0, t_0=0$ since both $\tilde S$ and $\check S$ are unitary. Notice that $\Psi$ has an non-degenerate limit at $q=0$, then it suffices to show that
$$\tilde S_i^{\ \halpha} e^{-u^\alpha/z}|_{q=0,t_0=0}\sim\frac{1}{\sqrt{-2\pi z}}\check S_i^{\ \balpha} e^{-\check u^\alpha/z}\vert_{q=0,t_0=0}.$$
The Lefschetz thimble $\gamma_2$ is $\{Y|Y\in (-\infty, 0)\}$. While the Lefschetz thimble $\gamma_1$ could not be explicitly depicted, we could alternatively consider the thimble $\gamma'_1=\{Y|Y\in (0,\infty)\}$ for $z<0$ of the oscillating integral $\int e^{W^\w_t/z} dy$. The integral yields the same asymptotic answer once we analytically continue $z<0$ to $z>0$, since the stationary phase expansion only depends on the local behavior (higher order derivatives) of $W^\w_t$ at the critical points.

So letting $Y=-Tz$ for $\alpha=2$, or $Y=-\frac{q}{Tz}$ for $\alpha=1$,
\begin{align*}
e^{-\check u^\alpha/z} \check S_0^{\ \balpha}=e^{-\frac{\Delta^\alpha(q)}{z}}(\frac{\chi^\alpha+\Delta^\alpha(q)}{2})^{\frac{\chi^\alpha}{z}}(-z)^{\frac{-\chi^\alpha}{z}}\int_{0}^\infty e^{-T}e^{-\frac{q}{Tz^2}} T^{\frac{\chi_\alpha}{z}-1} dT.
\end{align*}
Taking the limit $q\to 0$
\begin{align*}
\frac{1}{\sqrt{-2\pi z}} e^{-\check u^\alpha/z} \check S_0^{\ \balpha}\vert_{q=0}&=\frac{1}{\sqrt{-2\pi z}}e^{\frac{-\chi^\alpha}{z}} (\frac{-\chi^\alpha}{z})^{\frac{\chi^\alpha}{z}}\Gamma(\frac{-\chi^\alpha}{z})\\
&\sim \sqrt{\frac{1}{\chi^\alpha}} \exp(-\sum_{n=1}^\infty \frac{B_{2n}}{2n(2n-1)} (\frac{z}{\chi^\alpha})^{2n-1})\sim \tilde S_0^{\ \halpha} e^{-u^\alpha/z}\vert_{q=0}.
\end{align*}
Here we use the Stirling formula
$$
\log \Gamma(z) \sim \frac{1}{2}\log(2\pi) + (z-\frac{1}{2}) \log z-z+\sum_{n=1}^\infty \frac{B_{2n}}{2n(2n-1)}z^{1-2n}.
$$
Notice that $$\check S^{\ \balpha}_1=z \frac{\partial}{\partial t_1} \check S^{\ \alpha}_0=z \int_{\gamma_\alpha}e^{W^\w_t/z}(\frac{q}{Y}+\w_2) \frac{dY}{Y},$$
and similar calculation shows (letting $Y=-Tz$ if $\alpha=2$ and $Y=-\frac{q}{Tz}$ if $\alpha=1$)
$$
\frac{1}{\sqrt{-2\pi z}} e^{-\check u^\alpha/z} \check S_1^{\ \balpha}\vert_{q=0} \sim \w^\alpha  \sqrt{\frac{1}{\chi^\alpha}} \exp(-\sum_{n=1}^\infty \frac{B_{2n}}{2n(2n-1)} (\frac{z}{\chi^\alpha})^{2n-1})\sim \tilde S_1^{\ \halpha} e^{-u^\alpha/z}\vert_{q=0}.
$$
\end{proof}
Notice that the matrix $\check R$ is given by the asymptotic expansion. This theorem does not imply $\tilde S_i^{\ \halpha} e^{-u^\alpha/z}=\frac{1}{\sqrt{-2\pi z}}\check S_i^{\ \balpha} e^{-\check u^\alpha/z}$, which are unequal.

\subsection{The Eynard-Orantin topological recursion and the B-model graph sum}

Let $\omega_{g,n}$ be defined recursively by the Eynard-Orantin topological recursion \cite{EO07}:
$$
\omega_{0,1}=0,\quad  \omega_{0,2}=B(Y_1,Y_2)=\frac{dY_1\otimes dY_2}{(Y_1-Y_2)^2}.
$$
When $2g-2+n>0$,
\begin{eqnarray*}
\omega_{g,n}(Y_1,\ldots, Y_n) &=& \sum_{\alpha=1}^2\Res_{Y \to P_\alpha}
\frac{-\int_{\xi = Y}^{\hat{Y}} B(Y_n,\xi)}{2(\log(Y)-\log(\hat{Y}))dW}
\Big( \omega_{g-1,n+1}(Y,\hat{Y},Y_1,\ldots, Y_{n-1}) \\
&&\quad\quad  + \sum_{g_1+g_2=g}
\sum_{ \substack{ I\cup J=\{1,..., n-1\} \\ I\cap J =\emptyset } } \omega_{g_1,|I|+1} (Y,Y_I)\omega_{g_2,|J|+1}(\hat{Y},Y_J)
\end{eqnarray*}
where $Y\neq P_\alpha$ is in a small neighborhood of $P_\alpha$, and
$\hat{Y}\neq Y$ is the other point in the neighborhood such that $W_q^\w(\hat{Y})=W_q^\w(Y)$.

The B-model invariants $\omega_{g,n}$ can be expressed as graph sums \cite{KO, E11, E14, DOSS}.
We will use the formula stated in \cite[Theorem 3.7]{DOSS},  which is
equivalent to the formula in \cite[Theorem 5.1]{E11}.
Given a labeled graph $\vGa \in \bGa_{g,n}(\bP^1)$ with $L^o(\Ga)=\{l_1,\ldots,l_n\}$, we define its weight to be
\begin{eqnarray*}
w(\vGa)&=& (-1)^{g(\vGa)-1+n}\prod_{v\in V(\Gamma)} \Big(\frac{h^\alpha_1}{\sqrt{2}}\Big)^{2-2g-\val(v)} \langle \prod_{h\in H(v)} \tau_{k(h)}\rangle_{g(v)}
\prod_{e\in E(\Gamma)} \check{B}^{\alpha(v_1(e)),\alpha(v_2(e))}_{k(e),l(e)} \\
&& \cdot \prod_{j=1}^n \frac{1}{\sqrt{-2}} d\xi^{\alpha(l_j)}_{k(l_j)}(Y_j)
\prod_{l\in \cL^1(\Gamma)}(-\frac{1}{\sqrt{-2}})\check{h}^{\alpha(l)}_{k(l)}.
\end{eqnarray*}
Here, $\check{h}^\alpha_k=-\frac{1}{\sqrt{-1}^{2k-1}}2(2k-1)!!h^\alpha_{2k-1}$. Note that the definitions
of $\check{B}^{\alpha,\beta}_{k,l}$, $\check{h}_k^\alpha$, 
$d\xi^\alpha_k$ in this paper
are slightly different from those in \cite{DOSS}; for example,
the definition of $\check{B}_{k,l}^{\alpha,\beta}$ in this paper
differs from  Equation (3.11) of \cite{DOSS} by a factor of $2^{-k-l-1}$.
In our notation \cite[Theorem 3.7]{DOSS} is equivalent to:
\begin{theorem} \label{thm:DOSS}
For $2g-2+n>0$,
$$
\omega_{g,n} = \sum_{\Gamma \in \bGa_{g,n}(\bP^1)}\frac{w(\vGa)}{|\Aut(\vGa)|}.
$$
\end{theorem}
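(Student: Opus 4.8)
The plan is to obtain Theorem~\ref{thm:DOSS} directly from \cite[Theorem~3.7]{DOSS} (equivalently \cite[Theorem~5.1]{E11}) by a pure change of notation; no new geometric input is required, since the spectral curve $C_t^\w$, its branch points $P_1,P_2$, its Bergman kernel $B(Y_1,Y_2)=dY_1\otimes dY_2/(Y_1-Y_2)^2$, and the recursion defining $\omega_{g,n}$ are precisely those of the Eynard--Orantin/DOSS formalism. First I would recall the shape of the graph sum in \cite{DOSS}: $\omega_{g,n}$ is a sum over the same set of stable labeled graphs in $\bGa_{g,n}(\bP^1)$, whose weight is assembled from four local pieces --- a vertex factor $\langle\prod_{h\in H(v)}\tau_{k(h)}\rangle_{g(v)}$ dressed by a power of the leading Taylor coefficient of the local coordinate at the branch point, an edge factor equal to a Laplace transform of $B$ along pairs of Lefschetz thimbles, an ordinary-leaf factor given by the normalized differentials of the second kind, and a dilaton-leaf factor given by the higher Taylor coefficients of $y$ at the branch points --- together with an overall sign. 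The substance of the proof is then to match these four pieces and the sign against the quantities $\check B^{\alpha,\beta}_{k,l}$, $d\xi^\alpha_k$, $\check h^\alpha_k$ and $h^\alpha_1$ defined in the present section.

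The dictionary I would set up is as follows. The vertex normalization $(h^\alpha_1/\sqrt{2})^{2-2g-\val(v)}$ equals $(\sqrt{\Delta^\alpha(q)})^{2g-2+\val(v)}$, because $h^\alpha_1(q)=\sqrt{2/\Delta^\alpha(q)}$; this is exactly the feature that will later let the B-model vertex weight be identified with the A-model one. The edge factor $\check B^{\alpha,\beta}_{k,l}$ is read off coefficientwise from the Laplace transform $\check B^{\alpha,\beta}(u,v,q)$ of the oscillating-integral subsection; as the text already notes, this differs from \cite[(3.11)]{DOSS} by $2^{-k-l-1}$, and I would check that this discrepancy is cancelled by the $2^{-d}$ built into $d\xi_{\alpha,d}$ together with the $1/\sqrt{2}$ prefactors on edges and leaves. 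The ordinary-leaf factor $d\xi^\alpha_k$ and the dilaton-leaf factor $\check h^\alpha_k=-\frac{1}{\sqrt{-1}^{2k-1}}2(2k-1)!!\,h^\alpha_{2k-1}$ carry powers of $\sqrt{-1}$ that compensate precisely for our convention $\zeta_\alpha^2=\check u^\alpha-x$, which differs by a factor of $\sqrt{-1}$ from the one in \cite{E11,EO12}; keeping track of these powers, of the double factorials $(2d-1)!!$, and of the prefactors $1/\sqrt{-2}$ and $-1/\sqrt{-2}$ attached to ordinary and dilaton leaves is the heart of the verification. Finally the global sign $(-1)^{g(\vGa)-1+n}$ is the transcription of the corresponding sign in \cite{E11,DOSS} under the same change of $\zeta$-convention.

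With the dictionary in hand the proof is just substitution: replace each of the four local pieces and the sign in \cite[Theorem~3.7]{DOSS} by the expressions above, and note that the index set of graphs, the automorphism factors $|\Aut(\vGa)|$, and the $\psi$-class intersection numbers at vertices are literally unchanged, while all powers of $2$, $\sqrt{2}$, $\sqrt{-1}$ and the double factorials cancel in a mutually consistent way. I expect the only genuine difficulty to be this bookkeeping: since a miscounted constant at a leaf can be silently absorbed by an equal and opposite miscount at an adjacent vertex or edge, one must verify the normalizations at vertices, edges, ordinary leaves and dilaton leaves simultaneously and confirm that they are consistent. The cleanest safeguard is to test the identity on the base cases $(g,n)=(0,3)$ and $(1,1)$, where $\omega_{0,3}$ and $\omega_{1,1}$ can be computed straight from the recursion, before invoking the general graph sum of \cite{DOSS}.
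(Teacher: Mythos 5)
Your proposal matches the paper's treatment: the paper offers no independent proof of Theorem \ref{thm:DOSS}, but presents it as a direct restatement of \cite[Theorem 3.7]{DOSS} (equivalently \cite[Theorem 5.1]{E11}) after translating the normalizations of $\check{B}^{\alpha,\beta}_{k,l}$, $d\xi^\alpha_k$, $\check{h}^\alpha_k$ and the $\zeta$-convention, exactly the dictionary you describe. Your extra bookkeeping and the $(0,3)$, $(1,1)$ sanity checks are a reasonable way to make that translation explicit, but they add nothing beyond what the paper implicitly does.
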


\subsection{All genus mirror symmetry}\label{sec:main-results}
Given a meromorphic function $f(Y)$ on $\bP^1$ which is holomorphic
on $\bP^1\setminus \{ P_1, P_2\}$,  define
$$
\theta(f) = \frac{df}{dW} = \frac{Y^2}{(Y-P_1)(Y-P_2)} \frac{df}{dY}.
$$
Then $\theta(f)$ is also a meromorphic function which is holomorphic on $\bP^1\setminus \{P_1,P_2\}$.
For $\alpha\in \{1,2\}$, let
$$
\xi_{\alpha,0} = \frac{1}{\sqrt{-1}}\sqrt{\frac{2}{\Delta^\alpha(q)}} \frac{P_\alpha}{Y-P_\alpha}.
$$
Then $\xi_{\alpha,0}$ is a meromorphic function on $\bP^1$ with a simple
pole at $Y=P_\alpha$ and holomorphic elsewhere. Moreover,
the differential of $\xi_{\alpha,0}$ is $d\xi_{\alpha,0}$. For $k>0$, define
$$
W^\alpha_k := d((-1)^k\theta^k(\xi_{\alpha,0})).
$$
Define
\begin{align*}
\check S^{\ \balpha}_{\hubeta}(z)=-z \int_{y\in \gamma_\alpha} e^{\frac{x}{z}} \frac{d\xi_{\beta,0}}{\sqrt{-2}},
\quad \check S^{\ \kappa(\cL)}_{\hubeta}(z)=-z \int_{y\in \mathrm{SYZ}(\cL)} e^{\frac{x}{z}} \frac{d\xi_{\beta,0}}{\sqrt{-2}}.
\end{align*}
Then
\begin{align}\label{eqn:W-laplace}
\check S^{\ \balpha}_{\hubeta}(z)=-z^{k+1}\int_{y\in \gamma_\alpha} e^{\frac{W(y)}{z}} \frac{W^\beta_k}{\sqrt{-2}},
\quad \check S^{\ \kappa(\cL)}_{\hubeta}(z)=-z^{k+1}\int_{y\in \mathrm{SYZ}(\cL)} e^{\frac{W(y)}{z}} \frac{W^\beta_k}{\sqrt{-2}}
\end{align}
Therefore,
\begin{equation}\label{eqn:W-S}
\int_{y\in \mathrm{SYZ}(\cL)} e^{\frac{W(y)}{z}} \frac{W^\beta_k}{\sqrt{-2}} = -z^{-k-1}
\check S^{\ \kappa(\cL)}_{\hubeta}(z) = -z^{-k-1} \llangle \hat{\phi}_\alpha(q), \frac{\kappa(\cL)}{z-\psi}\rrangle^{\bP^1,T}_{0,2}.
\end{equation}
where the last equality follows from Theorem \ref{central-charge}.

For $\alpha=1,2$ and $j=1,\cdots,n$, let
\begin{equation}
\label{eqn:utilde}
\tilde{\bu}_j^\alpha(z) =\sum_{\beta=1}^2 S^\hualpha_{\,\ \hubeta}(z) \frac{\bu^\beta_j(z)}{\sqrt{\Delta^\beta(q)}}.
\end{equation}

\begin{Theorem}[All genus equivariant mirror symmetry for $\bP^1$]\label{main}
For $n>0$ and $2g-2+n>0$, we have
\begin{equation}
\omega_{g,n}\big|_{\frac{1}{\sqrt{-2}}W^\alpha_k(Y_j)=(\tilde{u}_j)^\alpha_k} = (-1)^{g-1+n}F_{g,n}^{\bP^1,T}(\bu_1,\cdots,\bu_n,\bt).
\end{equation}
\end{Theorem}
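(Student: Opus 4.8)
The plan is to prove the identity by a graph-by-graph comparison of the two expansions it makes available. On the right, Givental's formula (recalled at the end of Section~2) expresses $F^{\bP^1,T}_{g,n}(\bu_1,\dots,\bu_n,\bt)$ as $\sum_{\vGa\in\tilde{\bGa}_{g,n}(\bP^1)}\mathring{w}(\vGa)/|\Aut(\vGa)|$; on the left, Theorem~\ref{thm:DOSS} expresses $\omega_{g,n}$ as $\sum_{\vGa\in\bGa_{g,n}(\bP^1)}w(\vGa)/|\Aut(\vGa)|$, and since the ordinary leaves there carry the ordered arguments $Y_1,\dots,Y_n$ this is effectively the same index set $\tilde{\bGa}_{g,n}(\bP^1)$ with the same automorphism factors. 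The overall sign $(-1)^{g-1+n}$ in the statement is exactly the sign $(-1)^{g(\vGa)-1+n}$ built into $w(\vGa)$ (recall $g(\vGa)=g$ on $\bGa_{g,n}$), so it remains to match, for each $\vGa$, the two weight functions factor by factor: vertices, edges, ordinary leaves, and dilaton leaves.

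Three of the four matches are independent of the substitution; they amount to the statement that the B-model Frobenius structure $H_B$ has the same canonical metric and the same $R$-matrix as the A-model one. At a vertex, $(h^\alpha_1/\sqrt2)^{2-2g(v)-\val(v)}=(\sqrt{\Delta^{\beta(v)}(q)})^{2g(v)-2+\val(v)}$ since $h^\alpha_1(q)=\sqrt{2/\Delta^\alpha(q)}$, and the $\psi$-integrals $\langle\prod_{h\in H(v)}\tau_{k(h)}\rangle_{g(v)}$ are literally the same. At an edge, combining \eqref{eqn:B-ff} with Proposition~\ref{ABR} ($\check R=R$) and substituting $u=1/z$, $v=1/w$ shows that the generating series of $\check B^{\alpha,\beta}_{k,l}$ is $\frac1{z+w}\bigl(\delta_{\alpha,\beta}-\sum_\gamma R_\gamma^{\,\ \alpha}(-z)R_\gamma^{\,\ \beta}(-w)\bigr)$, whose $[z^kw^l]$-coefficient is $\cE^{\alpha,\beta}_{k,l}$; with the normalization adjustment recorded after Theorem~\ref{thm:DOSS} this gives $\check B^{\alpha,\beta}_{k,l}=\cE^{\alpha,\beta}_{k,l}$. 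At a dilaton leaf, the B-model weight $-\tfrac1{\sqrt{-2}}\check h^\alpha_k$ equals the A-model weight $(\cL^1)^\alpha_k=[z^{k-1}]\bigl(-\sum_\gamma\tfrac1{\sqrt{\Delta^\gamma(q)}}R_\gamma^{\,\ \alpha}(-z)\bigr)$; this is the assertion (established in general in \cite{DOSS}) that the graph sum of Theorem~\ref{thm:DOSS} coincides with Givental's ancestor graph sum for $H_B$, and for our curve it can also be verified directly from the stationary-phase expansion of $\check S_0^{\,\ \balpha}(z)$ used in the Proposition preceding Proposition~\ref{ABR} together with \eqref{eqn:B-model-Psi}.

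The substitution enters only through the ordinary leaves, and this is the heart of the matter. By \eqref{eqn:utilde}, the A-model leaf weight is $(\mathring{\cL}^{\bu_j})^\beta_k(l_j)=[z^k]\bigl(\sum_\gamma\tilde{\bu}_j^\gamma(z)\,R(-z)_\gamma^{\,\ \beta}\bigr)=\sum_{\gamma}\sum_{k'=0}^{k}(\tilde{u}_j)^\gamma_{k'}\,[z^{k-k'}]R(-z)_\gamma^{\,\ \beta}$, while the B-model leaf weight is $\tfrac1{\sqrt{-2}}d\xi_{\beta,k}(Y_j)$. Hence, once the B-model leaves are re-expressed in terms of the forms $W^\gamma_{k'}(Y_j)$ and the substitution $(\tilde{u}_j)^\gamma_{k'}=\tfrac1{\sqrt{-2}}W^\gamma_{k'}(Y_j)$ is applied, the theorem reduces to the identity of meromorphic $1$-forms on $\bP^1$
\begin{equation}\label{eqn:star}
d\xi_{\beta,k}=\sum_{\gamma=1}^2\sum_{k'=0}^{k}\bigl([z^{k-k'}]R(-z)_\gamma^{\,\ \beta}\bigr)\,W^\gamma_{k'},\qquad W^\gamma_0:=d\xi_{\gamma,0},
\end{equation}
for all $\beta\in\{1,2\}$ and $k\ge0$. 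To prove \eqref{eqn:star} I would note that both sides are differentials of the second kind on $\bP^1$ holomorphic away from $\{P_1,P_2\}$, hence each is determined by its principal parts at $P_1$ and $P_2$, and these are read off from the negative powers of $z$ in the Laplace transforms $\int_{y\in\gamma_\alpha}e^{W/z}(\cdot)$, $\alpha=1,2$. On the left, $d\xi_{\beta,k}$ is holomorphic at $P_\alpha$ for $\alpha\neq\beta$ and has only the prescribed $\zeta_\beta^{-2k-2}$ polar term at $P_\beta$, so (writing $x=\check u^\beta-\zeta_\beta^2$ and applying Watson's lemma) the negative-power part of its transform is $\delta_{\alpha\beta}$ times a fixed multiple of $z^{-k-1/2}e^{\check u^\alpha/z}$. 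On the right, \eqref{eqn:W-laplace} rewrites $\int_{\gamma_\alpha}e^{W/z}\tfrac{W^\gamma_{k'}}{\sqrt{-2}}$ as $z^{-k'}\int_{\gamma_\alpha}e^{W/z}\tfrac{d\xi_{\gamma,0}}{\sqrt{-2}}$; the stationary-phase expansion of $\check S^{\,\ \balpha}_{\hugamma}(z)$ (as in the Proposition preceding Proposition~\ref{ABR}, with $\check R=R$) together with the unitarity $\sum_\gamma R_\gamma^{\,\ \alpha}(z)R(-z)_\gamma^{\,\ \beta}=\delta_{\alpha\beta}$ then collapses the sum over $\gamma$, the truncation $k'\le k$ contributing only corrections of relative order $z^{k+1}$, so the negative-power part is again $\delta_{\alpha\beta}$ times the same multiple of $z^{-k-1/2}e^{\check u^\alpha/z}$. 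The principal parts therefore agree, \eqref{eqn:star} follows, and with it the theorem.

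The genuinely new ingredient, and the main obstacle, is \eqref{eqn:star}: the graph combinatorics and the vertex, edge, and dilaton weights match almost formally (and follow from the general DOSS correspondence once $R=\check R$ is known), whereas \eqref{eqn:star} is what carries out the passage from ancestors to descendants with arbitrary primary insertions. The delicate point is not the Laplace-transform dictionary between second-kind $1$-forms and their asymptotic expansions, but the bookkeeping of the numerical normalizations — the $\sqrt{-2}$'s, the double factorials $(2d+1)!!\,2^{-d}$, and the powers of $\sqrt{-1}$ built into $d\xi_{\alpha,d}$, $W^\alpha_k$, $\check h^\alpha_k$, and $\Psi$ — which have been arranged so that \eqref{eqn:star} holds exactly; verifying this calibration, and checking that the truncation in \eqref{eqn:star} does not disturb the principal part, is the one computation that must be done carefully.
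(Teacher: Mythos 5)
Your skeleton is the paper's: expand both sides as graph sums (Theorem \ref{thm:DOSS} on the B-side, Givental's formula on the A-side), absorb the sign into the $(-1)^{g(\vGa)-1+n}$ of the B-model weight, and match vertices, edges, dilaton leaves and ordinary leaves; your vertex, edge and dilaton-leaf matchings are essentially the ones in the paper, and you correctly isolate the same key ordinary-leaf identity
$d\xi_{\beta,k}=\sum_{\gamma=1}^2\sum_{k'=0}^{k}\bigl([z^{k-k'}]R_\gamma^{\,\ \beta}(-z)\bigr)W^\gamma_{k'}$,
which the paper deduces from the Bergman-kernel relation $d\xi^{\alpha}_{k}=W^\alpha_k-\sum_{i=0}^{k-1}\sum_{\beta}\check{B}^{\alpha,\beta}_{k-1-i,0}W^\beta_i$ of \cite{FLZ} combined with $\check{B}^{\alpha,\beta}_{k,l}=\cE^{\alpha,\beta}_{k,l}$.

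The gap is in your proposed proof of that identity. You argue that both sides are second-kind differentials on $\bP^1$ with poles only at $P_1,P_2$, hence determined by their principal parts, and that these principal parts ``are read off from the negative powers of $z$'' in the Laplace transforms along $\gamma_1,\gamma_2$. The second step fails. To all orders in $z$, the transform along $\gamma_\alpha$ is a Gaussian (principal-value) integral in the local coordinate $\zeta_\alpha$, with $x=\check u^\alpha-\zeta_\alpha^2$, and by parity it annihilates every odd polar term $\zeta_\alpha^{-2m-1}d\zeta_\alpha$: only the even part of the principal part is visible. Concretely, choose $a,b$ (not both zero) so that $g=a(Y-P_1)^{-2}+b(Y-P_1)^{-1}$ has no $\zeta_1^{-1}$ term in its Laurent expansion at $P_1$; then $\eta=dg\neq 0$ is a residue-free differential, holomorphic away from $P_1$, whose polar part is the purely odd term proportional to $\zeta_1^{-3}d\zeta_1$, so both of its Laplace transforms contain no negative powers of $z$ at all. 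Thus equality of the negative-power asymptotics along the two thimbles does not imply equality of principal parts, hence not equality of the forms. This is not a vacuous worry here: while $d\xi_{\beta,k}$ has by construction a single even polar term $\zeta_\beta^{-2k-2}d\zeta_\beta$, the forms $W^\gamma_{k'}=d\bigl((-1)^{k'}\theta^{k'}(\xi_{\gamma,0})\bigr)$ individually carry odd polar terms at both $P_1$ and $P_2$, because $\theta=-\frac{1}{2\zeta_\alpha}\frac{d}{d\zeta_\alpha}$ mixes the parities of the Laurent expansion of $\xi_{\gamma,0}$. So a substantial part of the content of the identity is precisely the cancellation of all odd polar parts in the $R$-weighted combination, which your comparison cannot detect; what you actually prove is only that the difference of the two sides has vanishing even polar parts. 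To close the argument you need an input controlling the odd parts as well, for instance the residue identity from \cite{FLZ} quoted above (the paper's route, which also bypasses the delicate principal-value definition of $\int_{\gamma_\alpha}e^{x/z}d\xi_{\alpha,k}$ when the pole sits on the contour), or a direct computation of the full local Laurent expansions rather than their Gaussian averages.
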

\begin{proof}
We will prove this theorem by comparing the A-model graph sum in the end of Section 2.7 and the B-model graph sum in the previous section.
\begin{enumerate}
\item Vertex. By Section 3.1, we have $h^\alpha_1(q)=\sqrt{\frac{2}{\Delta^\alpha(q)}}$. So in the B-model vertex, $\frac{h^\alpha_1}{\sqrt{2}}=\sqrt{\frac{1}{\Delta^\alpha(q)}}$. Therefore the B-model vertex matches the A-model vertex.

\item  Edge. By Section 3.6, we know that
\begin{eqnarray*}
\check{B}^{\alpha,\beta}_{k,l}&=&[u^{-k}v^{-l}]\left(\frac{uv}{u+v}(\delta_{\alpha,\beta}
-\sum_{\gamma=1,2} f^\alpha_\gamma(u,q)f^\beta_\gamma(v,q))\right)\\
&=&[z^{k}w^{l}]\left(\frac{1}{z+w}(\delta_{\alpha,\beta}
-\sum_{\gamma=1,2} f^\alpha_\gamma(\frac{1}{z},q)f^\beta_\gamma(\frac{1}{w},q))\right).
\end{eqnarray*}
By definition
$$
\cE^{\alpha,\beta}_{k,l} = [z^k w^l]
\Bigl(\frac{1}{z+w} (\delta_{\alpha,\beta}-\sum_{\gamma=1,2} R_\gamma^{\,\ \alpha}(-z) R_\gamma^{\,\ \beta}(-w)\Bigr).
$$
But we know that
$$R_\beta^{\,\ \alpha}(z) = f^\alpha_\beta(\frac{-1}{z}).$$
Therefore, we have
$$
\check{B}^{\alpha,\beta}_{k,l} = \cE^{\alpha,\beta}_{k,l}.
$$

\item Ordinary leaf. We have the following expression for $d\xi^{\alpha}_{k}$ (see \cite{FLZ}):
$$
d\xi^{\alpha}_{k}=W^\alpha_k -\sum_{i=0}^{k-1}\sum_{\beta}
\check{B}^{\alpha,\beta}_{k-1-i,0}W^\beta_i.
$$
By item 2 (Edge) above, for $k,l\in \bZ_{\geq 0}$,
\begin{eqnarray*}
\check{B}^{\alpha,\beta}_{k,l}&=&
[z^k w^l]
\Bigl(\frac{1}{z+w} (\delta_{\alpha,\beta}-\sum_{\gamma=1,2} R_\gamma^{\,\ \alpha}(-z) R_\gamma^{\,\ \beta}(-w))\Bigr).
\end{eqnarray*}
We also have
$$
[z^0](R_\beta^{\,\ \alpha}(-z))=\delta_{\alpha,\beta}.
$$
Therefore,
$$
d\xi^{\alpha}_{k}=\sum_{i=0}^{k}\sum_{\beta=1}^2\left([z^{k-i}]R_\beta^{\,\ \alpha}(-z)\right)W^\beta_i.
$$
So under the identification
$$
\frac{1}{\sqrt{-2}}W^\alpha_k(Y_j)=(\tilde{u}_j)^\alpha_k
$$
The B-model ordinary leaf matches the A-model ordinary leaf.

\item Dilaton leaf. We have the following relation between $\check{h}^{\alpha}_{k}$ and $f^\alpha_\beta(u,q)$ (see \cite{FLZ})
$$\check{h}^{\alpha}_{k}=[u^{1-k}]\sum_{\beta}\sqrt{-1}h^\beta_1f^\alpha_\beta(u,q).$$
By the relation
$$
R_\beta^{\,\ \alpha}(z) = f^\alpha_\beta(\frac{-1}{z})
$$
and the fact $h^\beta_1(q)=\sqrt{\frac{2}{\Delta^\beta(q)}}$, it is easy to see that the B-model dilaton leaf matches the A-model dilaton leaf.
\end{enumerate}
\end{proof}

Taking Laplace transforms at appropriate cycles to Theorem \ref{main}
produces a theorem concerning descendants potential.
\begin{Theorem}[All genus full descendant equivariant mirror symmetry for $\bP^1$] \label{full-descendant}
Suppose that $n>0$ and $2g-2+n>0$. For any $\cL_1,\ldots,\cL_n\in K_T(\bP^1)$,
there is a formal power series identity
\begin{equation} \label{eqn:psi}
\begin{aligned}
& \int_{y_1\in \SYZ(\cL_1)}\cdots\int_{y_n\in \SYZ(\cL_n)}
e^{\frac{W(y_1)}{z_1}+\cdots+ \frac{W(y_n)}{z_n}}\omega_{g,n}\\
= &(-1)^{g-1}     \llangle \frac{\kappa(\cL_1)}{z_1-\psi_1},\dots,\frac{\kappa(\cL_n)}{z_n-\psi_n} \rrangle_{g,n}.
\end{aligned}
\end{equation}
\end{Theorem}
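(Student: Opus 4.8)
The plan is to deduce Theorem~\ref{full-descendant} from Theorem~\ref{main} by taking, for each index $j$, the Laplace transform in the variable $Y_j$ along $\SYZ(\cL_j)$; the only genuinely new ingredient beyond Theorem~\ref{main} is the identification of the resulting specialization of the formal variables $\bu_j$. First I would choose the $j$-th descendant insertion to be $\bu_j(z)=\kappa(\cL_j)/(z_j-z)$, that is $(u_j)_a=z_j^{-a-1}\kappa(\cL_j)$; unwinding the definition of $F_{g,n}^{\bP^1,T}(\bu_1,\dots,\bu_n,\bt)$, the right-hand side of \eqref{eqn:psi} is then exactly $(-1)^{g-1}F_{g,n}^{\bP^1,T}(\bu_1,\dots,\bu_n,\bt)$. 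Since $\SYZ$, $\kappa$ (Definitions \ref{K-framing}, \ref{SYZ-dual}), and both sides of \eqref{eqn:psi} are additive in each $\cL_j$, one may assume each $\cL_j$ is an ample line bundle and then extend by linearity over $K_T(\bP^1)$, so that Theorem~\ref{central-charge} and \eqref{eqn:W-S} apply.

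Next I would rewrite $\omega_{g,n}$ via the graph sum of Theorem~\ref{thm:DOSS}. Its dependence on $Y_j$ enters only through the ordinary leaf $\tfrac{1}{\sqrt{-2}}d\xi^{\alpha(l_j)}_{k(l_j)}(Y_j)$, and the relation $d\xi^{\alpha}_{k}=\sum_{i=0}^{k}\sum_{\beta=1}^{2}\bigl([z^{k-i}]R_\beta^{\,\ \alpha}(-z)\bigr)W^\beta_i$ from the proof of Theorem~\ref{main} presents $\omega_{g,n}$ as a \emph{finite} linear combination of products $\prod_{j=1}^n\tfrac{1}{\sqrt{-2}}W^{\gamma_j}_{i_j}(Y_j)$ whose coefficients are precisely the vertex, edge, dilaton-leaf and $R$-matrix weights of the A-model graph sum for $F_{g,n}^{\bP^1,T}(\bu_1,\dots,\bu_n,\bt)$. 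Integrating term by term and using \eqref{eqn:W-S},
\[
\int_{y_j\in\SYZ(\cL_j)}e^{W(y_j)/z_j}\,\frac{W^{\gamma}_{i}(Y_j)}{\sqrt{-2}}=-\,z_j^{-i-1}\,\llangle \hat{\phi}_\gamma(q),\frac{\kappa(\cL_j)}{z_j-\psi}\rrangle^{\bP^1,T}_{0,2}.
\]

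On the A-model side, for this choice of $\bu_j$ I would compute $(\tilde u_j)^\gamma_i$ from \eqref{eqn:utilde}: expanding $\kappa(\cL_j)$ in the normalized canonical basis and using $\cS(\hat{\phi}_\beta(q))=\sum_{\alpha}\hat{\phi}_\alpha(q)S^{\hualpha}_{\,\ \hubeta}(z)$ together with the orthonormality of that basis gives $\tilde{\bu}_j^\gamma(z)=\tfrac{1}{z_j-z}\,\llangle\hat{\phi}_\gamma(q),\tfrac{\kappa(\cL_j)}{z-\psi}\rrangle^{\bP^1,T}_{0,2}$. Since $\llangle\hat{\phi}_\gamma(q),\tfrac{\kappa(\cL_j)}{z-\psi}\rrangle^{\bP^1,T}_{0,2}=\sum_{a\ge 0}z^{-a-1}\llangle\hat{\phi}_\gamma(q),\tau_a(\kappa(\cL_j))\rrangle^{\bP^1,T}_{0,2}$ is a power series in $z^{-1}$, extracting the coefficient of $z^i$ yields $(\tilde u_j)^\gamma_i=z_j^{-i-1}\llangle\hat{\phi}_\gamma(q),\tfrac{\kappa(\cL_j)}{z_j-\psi}\rrangle^{\bP^1,T}_{0,2}$, which is minus the Laplace transform of $\tfrac{1}{\sqrt{-2}}W^\gamma_i(Y_j)$ found above. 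Hence the $n$-fold Laplace transform sends each ordinary-leaf value $\tfrac{1}{\sqrt{-2}}W^\gamma_i(Y_j)$ to $-(\tilde u_j)^\gamma_i$; as $\omega_{g,n}$ carries exactly $n$ ordinary leaves, one per $Y_j$, and for each fixed $j$ this sign is uniform in $(\gamma,i)$, multilinearity in the $n$ leaves contributes a global factor $(-1)^n$, and Theorem~\ref{main} then gives
\[
\int_{y_1\in\SYZ(\cL_1)}\!\!\cdots\!\!\int_{y_n\in\SYZ(\cL_n)}e^{\sum_j W(y_j)/z_j}\,\omega_{g,n}=(-1)^n(-1)^{g-1+n}F_{g,n}^{\bP^1,T}(\bu_1,\dots,\bu_n,\bt)=(-1)^{g-1}F_{g,n}^{\bP^1,T}(\bu_1,\dots,\bu_n,\bt),
\]
which is \eqref{eqn:psi}.

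The argument is mostly bookkeeping once Theorem~\ref{main} and \eqref{eqn:W-S} are in hand; the hard part will be the chain of normalizations — reconciling the residue-pairing normalization built into $W^\alpha_k$ and $d\xi^\alpha_k$ with the factors $\sqrt{\Delta^\alpha(q)}$, $\Psi$, and the normalized canonical basis entering $S^{\hualpha}_{\,\ \hubeta}$ — together with the fact that $\kappa(\cL_j)$ is itself $z_j$-dependent, so that the claimed identity is an identity of formal power series in $z_1^{-1},\dots,z_n^{-1}$ (and in $q,t^0,t^1$). One should also keep in mind that \eqref{eqn:W-S}, and hence this whole reduction, rests on Theorem~\ref{central-charge}, whose proof passes through (modified) Bessel functions; equivalently the argument may be phrased as the graph-sum comparison used for Theorem~\ref{main}, where now only the ordinary-leaf weight changes, its new value being the Laplace transform of $\tfrac{1}{\sqrt{-2}}d\xi^\alpha_k(Y_j)$, which \eqref{eqn:W-S} matches with $(\mathring{\cL}^{\bu_j})^\beta_k(l_j)$.
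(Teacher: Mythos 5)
Your proof is correct, and it reaches \eqref{eqn:psi} from the same two inputs as the paper---Theorem \ref{main} and the Laplace-transform identity \eqref{eqn:W-S} (hence Theorem \ref{central-charge})---but the intermediate bookkeeping is genuinely different. The paper keeps the $\bu_j$ general: it re-expands the insertions in the basis $\phi_\beta(0)$, introduces the coordinates $\overline{\bu}_j$ and the mixed matrix $S^{\hualpha}_{\ \beta}(z)=\llangle \hat\phi_\alpha(q),\frac{\phi_\beta(0)}{z-\psi}\rrangle_{0,2}$, inverts the truncated relation $\tilde{\bu}_j=\bigl(S\,\overline{\bu}_j\bigr)_+$ by unitarity, Laplace transforms each $W^\alpha_k$ via \eqref{eqn:W-S}, and then uses unitarity a second time to recombine the two $S$-factors into $\chi^{\beta_i}(\phi_{\beta_i}(0),\kappa(\cL_i))z_i^{-a_i-1}$. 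You instead specialize $(u_j)_a=z_j^{-a-1}\kappa(\cL_j)$ at the outset, so the right-hand side of \eqref{eqn:psi} is literally $(-1)^{g-1}F^{\bP^1,T}_{g,n}$ at that point, and you verify that at this specialization $(\tilde u_j)^\gamma_i=z_j^{-i-1}\llangle\hat\phi_\gamma(q),\frac{\kappa(\cL_j)}{z_j-\psi}\rrangle^{\bP^1,T}_{0,2}$, which is exactly $(-1)$ times the Laplace transform of $\frac{1}{\sqrt{-2}}W^\gamma_i(Y_j)$ supplied by \eqref{eqn:W-S}; multilinearity in the $n$ ordinary leaves then yields the factor $(-1)^n$ converting $(-1)^{g-1+n}$ into $(-1)^{g-1}$, precisely as the $n$ factors $-z_i^{-k-1}$ do in the paper's computation. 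Your coefficient extraction is right, and since only the coefficients $(\tilde u_j)^\gamma_i$ with $i\ge 0$ enter the identification in Theorem \ref{main}, the truncation the paper handles with $(\,\cdot\,)_+$ (recall $S(z)$ contains negative powers of $z$, so $\tilde{\bu}_j$ is a Laurent series) does not affect your formula---but you should say this explicitly, and also state that linearity of both sides of Theorem \ref{main} in each $\bu_j$ separately is what legitimizes evaluating that formal identity at the $z_j$-dependent classes $\kappa(\cL_j)$. What your route buys is economy: no flat basis at $q=0$ and no unitarity identities; what the paper's route buys is an explicit inversion $\overline{\bu}_j\mapsto\tilde{\bu}_j$ valid for arbitrary insertions, at the cost of two applications of unitarity. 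Incidentally, you do not actually need the assertion that the coefficients of the $W$-monomials are the A-model graph weights: multilinearity of $\omega_{g,n}$ in its $n$ ordinary leaves together with Theorem \ref{main} used as a black box already suffices.
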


\begin{remark}
By Theorem \ref{central-charge},
\begin{equation}\label{eqn:psi-disk}
\int_{y_1\in \SYZ(\cL)} e^{\frac{W(y_1)}{z_1}}ydx =  - \llangle \frac{\kappa(\cL_1)}{z_1-\psi_1}\rrangle_{0,1}^{\bP^1,T}
\end{equation}
which is the analogue of  \eqref{eqn:psi} in the unstable  case $(g,n)=(0,1)$.
\end{remark}

\begin{proof}[Proof of Theorem \ref{full-descendant}]
By \eqref{eqn:utilde},
$$
\tilde \bu^\alpha_j(z)=\sum_{\beta=1}^2 \sqrt{\Delta^\alpha(q)} \llangle \phi_\alpha(q),\frac{\phi_\beta(q)}{z-\psi} \rrangle^{\bP^1,T}_{0,2}  \bu^\beta_j(z).
$$
Define the flat coordinates $\overline \bu_j^\alpha$ by
$$
\sum_{\alpha=1}^2 \bu^\alpha_j(z) \phi_\alpha(q)= \sum_{\alpha=1}^2 \overline \bu_j^\alpha(z) \phi_\alpha(0),
$$
and a power series in $1/z$
$$
S^{\hualpha}_{\ \beta}(z)=\llangle \hat \phi_\alpha(q),\frac{\phi_\beta(0)}{z-\psi}\rrangle_{0,2}.
$$
Then
\begin{align*}
\tilde \bu^\alpha_j(z)=\sum_{\beta=1}^2 \Bigl (\llangle \hat\phi_\alpha(q),\frac{\phi_\beta(0)}{z-\psi} \rrangle  \overline \bu^\beta_j(z)\Bigr)_+
=\sum_{\beta=1}^2 \bigl( S^{\hualpha}_{\ \beta}(z) \overline \bu^\beta_j(z) \bigr)_+.
\end{align*}
Notice that $(S^{\hualpha}_{\ \beta})$ is unitary, i.e. $\sum_{\gamma} S^{\hugamma}_{\ \alpha}(z) S^\hugamma_{\ \beta}(-z)=\frac{1}{\chi^\beta}\delta_{\alpha\beta}$.
We have
\begin{align*}
\sum_{\alpha=1}^2 \bigl( S^\hualpha_{\ \gamma}(-z) \tilde \bu^\alpha_j(z)\bigr)_+=\sum_{\alpha=1}^2 \Bigl( \sum_{\beta =1}^2 S^\hualpha_{\ \beta}(z) S^\hualpha_{\ \gamma}(-z) \overline \bu^\beta_j(z) \Bigr)=\frac{\overline \bu^\gamma_j(z)}{\chi^\gamma}.
\end{align*}
Taking the Laplace transform of $\omega_{g,n}$
\begin{align*}
&\int_{y_1\in \SYZ(\cL_1)}\dots \int_{y_n\in \SYZ(\cL_n)} e^{\frac{W(y_1)}{z_1}+\dots +\frac{W(y_n)}{z_n}}\omega_{g,n}\\
=&\int_{y_1\in \SYZ(\cL_1)}\dots\int_{y_n\in \SYZ(\cL_n)} e^{\sum_{i=1}^n \frac{W(y_i)}{z_i}} (-1)^{g-1+n}\bigl( \sum_{\beta_i,a_i} \llangle
\prod_{i=1}^n\tau_{a_i}(\phi_{\beta_i}(0)) \rrangle_{g,n} \\
& \quad\quad \cdot \prod_{i=1}^n (\overline u_i)^{\beta_i}_{a_i}\bigr)\vert_{{(\tilde u_j)}^\beta_k=\frac{1}{\sqrt{-2}}W^\beta_k(y_j)}\\
=&\int_{y_1\in \SYZ(\cL_1)}\dots\int_{y_n\in \SYZ(\cL_n)}
e^{\sum_{i=1}^n \frac{W(y_i)}{z_i}} (-1)^{g-1+n}\bigl( \sum_{\beta_i,a_i} \llangle \prod_{i=1}^n\tau_{a_i}(\phi_{\beta_i}(0)) \rrangle_{g,n} \\
&\quad\quad \cdot \prod_{i=1}^n (\chi^{\beta_i}\sum_{\alpha=1}^2 \sum_{k\in \bZ_{\ge 0}}[z_i^{a_i-k}] S_{\ \beta_i}^{\hualpha}(-z_i)
\frac{W^\alpha_{k}(y_i)}{\sqrt{-2}}\bigr).
\end{align*}
Using \eqref{eqn:W-S}
\begin{align*}
&\int_{y_1\in \SYZ(\cL_1)}\dots \int_{y_n\in \SYZ(\cL_n)} e^{\frac{W(y_1)}{z_1}+\dots +\frac{W(y_n)}{z_n}}\omega_{g,n}\\
=& (-1)^{g-1+n} \bigl(  \sum_{\beta_i,a_i} \llangle \prod_{i=1}^n\tau_{a_i}(\phi_{\beta_i}(0)) \rrangle_{g,n} \prod_{i=1}^n (\chi^{\beta_i}\sum_{\alpha=1}^2\sum_{k\in \bZ_{\geq 0}}([z_i^{a_i-k}] S_{\ \beta_i}^\hualpha(-z_i))S^{\ \kappa(\cL_i)}_\hualpha (z_i)( -z_i^{-k-1}))\bigr)\\
=&(-1)^{g-1}      \sum_{\beta_i,a_i} \llangle \prod_{i=1}^n\tau_{a_i}(\phi_{\beta_i}(0)) \rrangle_{g,n}  \prod_{i=1}^n  \chi^{\beta_i} (\phi_{\beta_i}(0), \kappa(\cL_i))  z_i^{-a_i-1}\\
=&(-1)^{g-1}     \llangle \frac{\kappa(\cL_1)}{z_1-\psi_1},\dots,\frac{\kappa(\cL_n)}{z_n-\psi_n} \rrangle_{g,n}.
\end{align*}
\end{proof}

\section{The non-equivariant limit and the Norbury-Scott conjecture}

In this section, we consider the non-equivariant limit $\w_1=\w_2=0$.
\subsection{The non-equivariant $R$-matrix}
By \cite[Section 1.3]{G01}, $R(z)=I+\sum_{n=1}^\infty R_n z^n$ is
uniquely determined by:
\begin{enumerate}
\item The recursive relation: $(d+\Psi^{-1}d\Psi)R_n =[dU, R_{n+1}]$.
\item The homogeneity of $R(z)$: $R_nq^{n/2}$ is a constant matrix.
\end{enumerate}
The unique solution $R(z)$ satisfying the above conditions was computed
explicitly in \cite{SS}:
\begin{lemma}[ {\cite[Lemma 3.1]{SS}} ]
$$
R_n= q^{-\frac{n}{2}}\frac{(2n-1)!!(2n-3)!!}{n!2^{4n}}
\begin{pmatrix} -1 & 2n\sqrt{-1}(-1)^{n+1} \\ 2n\sqrt{-1}  & (-1)^{n+1}  \end{pmatrix}
$$
\end{lemma}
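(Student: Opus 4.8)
The plan is to exploit the uniqueness already recorded just above the statement: $R(z)=I+\sum_{n\ge 1}R_nz^n$ is the \emph{only} solution of the recursion $(d+\Psi^{-1}d\Psi)R_n=[dU,R_{n+1}]$ that also satisfies the homogeneity condition that $R_nq^{n/2}$ be a constant matrix, so it is enough to verify that the matrices in the statement satisfy both conditions. The homogeneity condition is built into the stated formula, so the real work is the recursion. For that I would first make all the ingredients completely explicit in the non-equivariant limit, where (with $Q=1$, $q=e^{t^1}$) one has $u^1=t^0+2\sqrt q$, $u^2=t^0-2\sqrt q$ and the matrix $\Psi$ recorded in Section~\ref{sec:A-canonical}.

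The first computation is $\Psi^{-1}d\Psi$. Writing $\Psi=DA$ with $D=\mathrm{diag}(e^{-t^1/4},e^{t^1/4})$ and $A=\frac1{\sqrt2}\left(\begin{smallmatrix}1&-\sqrt{-1}\\1&\sqrt{-1}\end{smallmatrix}\right)$ the constant factor, one gets $\Psi^{-1}d\Psi=A^{-1}(D^{-1}dD)A=\tfrac14\,A^{-1}\,\mathrm{diag}(-1,1)\,A\,dt^1$, and a two--by--two matrix product gives
\[
\Psi^{-1}d\Psi=\frac{dt^1}{4}\begin{pmatrix}0&\sqrt{-1}\\-\sqrt{-1}&0\end{pmatrix}.
\]
Next, from $dq=q\,dt^1$ one has $du^\alpha=dt^0\pm\sqrt q\,dt^1$, hence $dU=dt^0\cdot I+\sqrt q\,dt^1\,\mathrm{diag}(1,-1)$; the central term $dt^0\cdot I$ drops out of the commutator, so $[dU,M]=\sqrt q\,dt^1\,[\mathrm{diag}(1,-1),M]$, which is off-diagonal and proportional to $dt^1$. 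Finally, the homogeneity ansatz $R_n=q^{-n/2}C_n$ with $C_n$ constant gives $dR_n=-\tfrac n2 R_n\,dt^1$.

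Substituting these into the recursion and cancelling the common factor $q^{-n/2}\,dt^1$ — here it matters that the $\sqrt q$ coming from $dU$ promotes $q^{-(n+1)/2}$ on the right to $q^{-n/2}$, so both sides carry the same power of $q$ — reduces the recursion to four scalar identities among the entries of $C_n$ and $C_{n+1}$. I expect the two diagonal identities to force $(C_n)_{21}=-2n\sqrt{-1}\,(C_n)_{11}$ and $(C_n)_{12}=2n\sqrt{-1}\,(C_n)_{22}$, which is exactly the internal shape of the displayed matrix, while the two off-diagonal identities should both collapse, once those relations are inserted, to the single scalar recursion $c_{n+1}=\frac{(2n-1)(2n+1)}{16(n+1)}\,c_n$ for the overall coefficient $c_n=\frac{(2n-1)!!(2n-3)!!}{n!2^{4n}}$; this last identity is immediate from $(2n+1)!!=(2n+1)(2n-1)!!$ and $(2n-1)!!=(2n-1)(2n-3)!!$. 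The base case $n=0$ (where $R_0=I$, so the recursion reads $\Psi^{-1}d\Psi=[dU,R_1]$) is checked directly against $R_1=\tfrac1{16}q^{-1/2}\left(\begin{smallmatrix}-1&2\sqrt{-1}\\2\sqrt{-1}&1\end{smallmatrix}\right)$, using the convention $(-1)!!=1$. By the uniqueness statement this identifies $R(z)$ with the claimed series. There is no genuine conceptual obstacle here; the only delicate points are organizational — getting the signs in $A^{-1}\,\mathrm{diag}(-1,1)\,A$ right, checking that the $t^0$-dependence of $U$ is harmless, and matching the powers of $q$ on the two sides so that everything really does collapse to the one scalar recursion above. (As a sanity check one could instead read $R(z)$ off the non-equivariant limit of the Bessel asymptotics of $J^\alpha$ from Section~\ref{sec:A-S}, but the direct solution of the ODE is shorter.)
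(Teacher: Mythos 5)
Your plan is correct, and the computations you leave as ``expected'' do close exactly as you predict: with $\Psi^{-1}d\Psi=\frac{dt^1}{4}\bigl(\begin{smallmatrix}0&\sqrt{-1}\\-\sqrt{-1}&0\end{smallmatrix}\bigr)$, $dU=dt^0\,I+\sqrt{q}\,dt^1\,\mathrm{diag}(1,-1)$ and the ansatz $R_n=q^{-n/2}C_n$, the vanishing of the diagonal of $(d+\Psi^{-1}d\Psi)R_n$ forces $(C_n)_{21}=-2n\sqrt{-1}(C_n)_{11}$, $(C_n)_{12}=2n\sqrt{-1}(C_n)_{22}$, the off-diagonal entries both reduce to $c_{n+1}=\frac{(2n-1)(2n+1)}{16(n+1)}c_n$, which $c_n=\frac{(2n-1)!!(2n-3)!!}{n!2^{4n}}$ satisfies, and the base case $\Psi^{-1}d\Psi=[dU,R_1]$ with $R_1=\frac{1}{16}q^{-1/2}\bigl(\begin{smallmatrix}-1&2\sqrt{-1}\\2\sqrt{-1}&1\end{smallmatrix}\bigr)$ checks; uniqueness as quoted from \cite[Section 1.3]{G01} then finishes the argument. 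This is, however, a genuinely different route from the one taken in the paper. The paper never solves the recursion: it invokes Proposition \ref{ABR} ($R=\check R$) and recovers the lemma from the B-model, computing the oscillating integrals $\check S_i^{\,\ \alpha}$ explicitly in the non-equivariant limit (the substitution $T=\frac{2\sqrt q}{z}(\cosh y-1)$ reduces them to Gamma-function integrals), then extracting $R_n$ from $[z^n]\bigl(\tS(z)e^{-U/z}\bigr)$ with $\tS=\frac{1}{\sqrt{-2\pi z}}\check S$. Your verification is shorter and purely A-model/formal (elementary $2\times2$ algebra, plus the uniqueness statement the paper records just above the lemma), and is essentially the computation of \cite{SS}; the paper's stationary-phase derivation costs more calculation but buys an independent confirmation from the mirror side, which is precisely the stated purpose of that subsection and also serves as a concrete consistency check of Proposition \ref{ABR}.
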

By Proposition \ref{ABR} , $R(z)=\check{R}(z)$. In this subsection, we recover
the above lemma by computing the stationary phase expansion of $\check{S}$.

We assume $z,q\in (0,\infty)$, where $q=Qe^{t^1}$.
\begin{eqnarray*}
\check{S}_0^{\,\ 2} &=& \int_{y=-\infty}^{y=+\infty} e^{\frac{1}{z}(t^0+ e^{y-i\pi} + qe^{-(y-i\pi)}) } dy \\
&=&e^{t^0/z}\int_{y=-\infty}^{y=+\infty} e^{-\frac{2\sqrt{q}}{z}\cosh(y-\frac{t^1}{2})} dy\\
&=& e^{t^0/z}\int_{y=-\infty}^{y=+\infty}  e^{-\frac{2\sqrt{q}}{z}\cosh(y)} dy \\
&=&2 e^{(t^0-2\sqrt{q})/z}\int_{y=0}^{y=+\infty} e^{-\frac{2\sqrt{q}}{z}(\cosh(y)-1)}dy.
\end{eqnarray*}
Let $T=\frac{2\sqrt{q}}{z}(\cosh(y)-1)$, then
$$
y=\cosh^{-1}(1+\frac{zT}{2\sqrt{q}}),\quad
dy=\frac{1}{2}q^{-\frac{1}{4}}T^{-1/2}\sqrt{ \frac{z}{ 1+\frac{zT}{4\sqrt{q}} } }.
$$
\begin{eqnarray*}
\check{S}_0^{\,\ 2}
&=& e^{(t^0-2\sqrt{q})/z}\sum_{n=0}^\infty
(\frac{z}{\sqrt{q}})^{n+\frac{1}{2}}{-1/2 \choose n} 2^{-2n}\int_{T=0}^{T=+\infty} e^{-T}  T^{n-1/2} dT\\
&=& e^{(t^0-2\sqrt{q})/z}\sum_{n=0}^\infty (\frac{z}{\sqrt{q}})^{n+\frac{1}{2}}
\frac{(-1)^n(2n-1)!!}{n! 2^{3n}} \Gamma(n+\frac{1}{2})\\
&=& \sqrt{\pi}e^{(t^0-2\sqrt{q})/z}\sum_{n=0}^\infty (\frac{z}{\sqrt{q}})^{n+\frac{1}{2}}
\frac{(-1)^n ((2n-1)!!)^2}{n! 2^{4n}};
\end{eqnarray*}
\begin{align*}
\check{S}_1^{\,\ 2}=z\frac{\partial}{\partial t_1} \check{S}_0^{\,\ 2}
= \sqrt{\pi}ze^{(t^0-2\sqrt{q})/z}\sum_{n=0}^\infty
(\frac{z}{\sqrt{q}})^{n-1/2} (1+(\frac{1}{4}+\frac{n}{2})\frac{z}{\sqrt{q}}  )
\frac{(-1)^{n+1}((2n-1)!!)^2}{n! 2^{4n}}.
\end{align*}

Similarly,
$$
\check{S}_0^{\,\ 1} = \sqrt{-\pi}e^{(t^0+2\sqrt{q})/z}\sum_{n=0}^\infty
(\frac{z}{\sqrt{q}})^{n+\frac{1}{2}}
\frac{((2n-1)!!)^2}{n! 2^{4n}};
$$
$$
\check{S}_1^{\,\ 1} = \sqrt{-\pi}ze^{(t^0+2\sqrt{q})/z}\sum_{n=0}^\infty
(\frac{z}{\sqrt{q}})^{n-\frac{1}{2}}(1-(\frac{1}{4}+\frac{n}{2})\frac{z}{\sqrt{q}})
\frac{((2n-1)!!)^2}{n! 2^{4n}}.
$$
Therefore,
$$
\tS(z) =\frac{1}{\sqrt{-2\pi z}}\check{S}(z),
$$
\begin{align*}
&[z^n] \left( \tS(z)e^{-U/z}\right) \\
=& \begin{pmatrix} \frac{((2n-1)!!)^2}{\sqrt{2}n!2^{4n}q^{\frac{n}{2}+\frac{1}{4}}}
& \frac{\sqrt{-1}(-1)^{n+1}((2n-1)!!)^2}{\sqrt{2}n!2^{4n}q^{\frac{n}{2}+\frac{1}{4}}}  \\
\frac{((2n-1)!!)^2}{\sqrt{2}n!2^{4n}q^{\frac{n}{2}-\frac{1}{4}}}
-(\frac{n}{2}-\frac{1}{4})\frac{((2n-3)!!)^2}{\sqrt{2}(n-1)! 2^{4n-4} q^{\frac{n}{2}-\frac{1}{4}}}
& \frac{\sqrt{-1}(-1)^n(2n-1)!!)^2}{\sqrt{2}n! 2^{4n} q^{\frac{n}{2}-\frac{1}{4}}}
+(\frac{n}{2}-\frac{1}{4})\frac{\sqrt{-1}(-1)^{n+1}((2n-3)!!)^2}{\sqrt{2}(n-1)! 2^{4n-4} q^{\frac{n}{2}-\frac{1}{4}}} \end{pmatrix},
\end{align*}
\begin{align*}
R_n  &= \begin{pmatrix}-\frac{(2n-1)!!(2n-3)!!}{n!2^{4n}} &
\frac{\sqrt{-1}(-1)^{n+1}(2n-1)!!(2n-3)!!}{(n-1)!2^{4n-1}} \\
\frac{\sqrt{-1}(2n-1)!!(2n-3)!!}{(n-1)!2^{4n-1}} &
\frac{(-1)^{n+1}(2n-1)!!(2n-3)!!}{n!2^{4n}} \end{pmatrix}q^{-\frac{n}{2}} \\
&= q^{-\frac{n}{2}}\frac{(2n-1)!!(2n-3)!!}{n!2^{4n}}
\begin{pmatrix} -1 & 2n\sqrt{-1}(-1)^{n+1} \\ 2n\sqrt{-1}  & (-1)^{n+1}  \end{pmatrix}
\end{align*}

\subsection{The Norbury-Scott Conjecture} \label{sec:NS}
In this subsection, we assume $\w_1=\w_2=t^0=0$. Then
$$
\llangle \tau_{a_1}(H) \cdots \tau_{a_n}(H)\rrangle^{\bP^1}_{g,n}=
q^{\frac{1}{2}(\sum_{i=1}^n a_i) +1-g} \langle \tau_{a_1}(H) \cdots \tau_{a_n}(H)\rangle^{\bP^1}_{g,n}.
$$
Note that when $\frac{1}{2}(\sum_{i=1}^n a_i)+ 1-g$ is not an nonnegative integer, both hand sides are zero.

When $2g-2+n>0$, $\omega_{g,n}$ is holomorphic near $Y=0$, and one may expand it in the local holomorphic
coordinate $\tx = x^{-1}= (Y+\frac{q}{Y})^{-1}$.
\begin{theorem} Suppose that $2g-2+n>0$. Then  near $Y=0$, $\omega_{g,n}$ has the following
expansion
$$
\omega_{g,n}= (-1)^{g-1+n} \sum_{a_1,\ldots, a_n\in \bZ_{\geq 0}}\llangle \tau_{a_1}(H)\cdots \tau_{a_n}(H)\rrangle_{g,n}^{\bP^1}\prod_{j=1}^n
\frac{(a_j+1)!}{x^{a_j+2}}dx_j
$$
\end{theorem}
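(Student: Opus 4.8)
The plan is to deduce the expansion from Theorem~\ref{full-descendant} by choosing K-theory classes whose Lefschetz-thimble integration degenerates to a residue at $Y=0$, and then reading off Taylor coefficients. \emph{Step 1 (residue reformulation).} Since $2g-2+n>0$, $\omega_{g,n}$ is holomorphic near each $Y_j=0$, so in the local coordinate $x_j=Y_j+q/Y_j$ (recall $t^0=0$) one may write, near $Y_1=\cdots=Y_n=0$,
$$
\omega_{g,n}=\sum_{b_1,\dots,b_n\ge 0}C_{\vec b}\prod_{j=1}^n\frac{dx_j}{x_j^{b_j+2}},
$$
the forms $dx_j/x_j^{b_j+2}$ being a basis of holomorphic $1$-forms at $Y_j=0$. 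A direct computation from $dx=\frac{Y^2-q}{Y^2}dY$ gives $\Res_{Y=0}(x^m\,dx)=-\delta_{m,-1}$ for all $m\in\bZ$ (the even-power cancellation $\binom{2l+1}{l}=\binom{2l+1}{l+1}$), hence $\Res_{Y_1=0}\cdots\Res_{Y_n=0}\big(\prod_j x_j^{b_j+1}\big)\omega_{g,n}=(-1)^nC_{\vec b}$. It therefore suffices to prove
$$
\Res_{Y_1=0}\cdots\Res_{Y_n=0}\Big(\prod_{j=1}^n x_j^{b_j+1}\Big)\omega_{g,n}=(-1)^{g-1}\Big(\prod_{j=1}^n(b_j+1)!\Big)\llangle\tau_{b_1}(H)\cdots\tau_{b_n}(H)\rrangle^{\bP^1}_{g,n}.
$$

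\emph{Step 2 (a virtual bundle with $\SYZ$ a loop around $0$).} Set $\cE:=\cO_{\bP^1}(p_2)-\cO_{\bP^1}(p_1+p_2)\in K_T(\bP^1)$. Tracing Definition~\ref{SYZ-dual} through $y\mapsto Y=e^y$, the horizontal legs of $\SYZ(\cO(p_2))$ and $\SYZ(\cO(p_1+p_2))$ along the ray $\{Y\in(-1,0)\}$ cancel, while the two vertical legs on the imaginary axis differ by exactly one clockwise turn of $\{|Y|=1\}$; so $\SYZ(\cE)$ is homologous to the clockwise unit circle. Taking the non-equivariant limit (that is, the $\chi^\alpha\to 0$ limit of the localization formula) in Definition~\ref{K-framing} gives $\kappa(\cO_{\bP^1}(d))\equiv -z+2(\log(-z)-\gamma_E+\pi i d)H\pmod{H^2}$, so $\kappa(\cE)=\kappa(\cO(p_2))-\kappa(\cO(p_1+p_2))=-2\pi i\,H$.

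\emph{Step 3 (collapse and match).} For $2g-2+n>0$ the differential $\omega_{g,n}$ is anti-invariant in each variable under the involution $\sigma\colon Y\mapsto q/Y$ — equivalently, near a branch point $P_\alpha=\pm\sqrt q$, in the coordinate $\zeta_\alpha$ with $x=\check u^\alpha-\zeta_\alpha^2$ it has only even powers of $\zeta_\alpha$ — which is visible from the $\sigma$-anti-invariance of each $d\xi_{\alpha,d}$ in the graph sum of Theorem~\ref{thm:DOSS}; since $e^{W/z}=e^{\check u^\alpha/z}e^{-\zeta_\alpha^2/z}$ is $\sigma$-even near $P_\alpha$, the residues of $e^{W/z}\omega_{g,n}$ at $P_1$ and $P_2$ vanish, and hence $\int_{y\in\SYZ(\cE)}e^{W(y)/z}(\cdots)=-2\pi i\,\Res_{Y=0}e^{W/z}(\cdots)$ regardless of whether the branch points lie inside the unit circle. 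Now apply Theorem~\ref{full-descendant} (in the non-equivariant limit) with all $\cL_j=\cE$: on the left expand $e^{x_j/z_j}=\sum_{a_j\ge 0}x_j^{a_j}/(a_j!\,z_j^{a_j})$, on the right use $\kappa(\cE)/(z_j-\psi_j)=-2\pi i\sum_{b_j\ge 0}\tau_{b_j}(H)\,z_j^{-b_j-1}$, and compare the coefficient of $\prod_j z_j^{-b_j-1}$; this is exactly the residue identity of Step 1, and feeding it back through Step 1 proves the theorem.

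The substantive work is Step 2 — the bookkeeping that identifies $\SYZ(\cE)$ (and its orientation) with the clockwise unit circle directly from the defining picture of the SYZ T-dual — together with the standard Eynard–Orantin fact, already used in Theorem~\ref{thm:DOSS}, that $\omega_{g,n}$ is locally anti-invariant at each branch point when $2g-2+n>0$; it is this anti-invariance that removes the potential contributions of $P_1,P_2$, and it also makes the cancellation of the radial legs in Step 2 harmless even when $P_2\in(-1,0)$.
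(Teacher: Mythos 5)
Your proposal is correct in substance, but it takes a genuinely different route from the paper's. The paper deduces the expansion directly from Theorem \ref{main}: it substitutes $t^0_a=0$, $t^1_a=(a+1)!\,x^{-a-2}dx$ into the leaf variables, computes the relevant entries $S^{\underline{\hat\alpha}}_{\ i}(z)$ from the explicit $J$-function, and verifies by a residue/binomial computation that the resulting series $\txi_{\alpha,0}$ coincide with the expansions of $\xi_{\alpha,0}$ near $Y=0$ in $\tx=x^{-1}$, so that the identification $\frac{1}{\sqrt{-2}}W^\alpha_k(Y_j)=(\tilde u_j)^\alpha_k$ literally becomes the statement of the theorem. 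You instead obtain it as a corollary of Theorem \ref{full-descendant}: for $\cE=\cO_{\bP^1}(p_2)-\cO_{\bP^1}(p_1+p_2)$ one has $\kappa(\cE)=-2\pi i\,H$ at $\w_1=\w_2=0$, and $\SYZ(\cE)$ collapses (for integrands depending only on $Y$) to one clockwise turn of the unit circle, so the Laplace transform becomes $(-2\pi i)^n$ times an iterated residue at $Y_j=0$; comparing coefficients of $\prod_j z_j^{-b_j-1}$ then yields exactly the coefficients $C_{\vec b}$ of your Step 1. I checked the key points: $\Res_{Y=0}x^m dx=-\delta_{m,-1}$, the value $\kappa(\cE)=-2\pi i H$, the turn count and orientation of the vertical legs, the sign bookkeeping, and the vanishing of the residues at $P_1,P_2$, which indeed follows from the anti-invariance of each $d\xi_{\alpha,k}$ (hence of $\omega_{g,n}$ in each variable) under $Y\mapsto q/Y$ combined with the invariance of $e^{W/z}$; there is also no circularity, since the paper's proof of Theorem \ref{full-descendant} does not use this expansion. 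What your route buys is conceptual economy — no $J$-function/$S$-matrix computation and no explicit expansion of $\xi_{\alpha,0}$ — at the cost of two points you should state explicitly: (a) you need the non-equivariant limit of Theorem \ref{full-descendant}, because the cancellation of the horizontal legs only happens after projecting to $Y$ and fails equivariantly due to the $\w_1\log Y$ term, so one should note that both sides of \eqref{eqn:psi} are continuous at $\w_1=\w_2=0$; (b) the SYZ contours pass through the pole $P_2$ of $\omega_{g,n}$ on the negative real axis, so the individual Laplace transforms must be understood with the same implicit regularization (a fixed small deformation off the pole, or integration by parts back to the regular forms $\Phi_i$) that the paper already uses in \eqref{eqn:W-laplace}--\eqref{eqn:W-S}; applied identically to both legs, this does not affect your cancellation or the residue computation. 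Neither point is a gap relative to the paper's own level of rigor, but each deserves a sentence.
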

The Norbury-Scott conjecture corresponds to the specialization $q=1$, i.e. $t^1=0,Q=1$.

\begin{proof}
Define $\tW^\alpha_k$ by
$$
\frac{1}{\sqrt{-2}} \tW^\alpha_k = \tilde{\bu}^\alpha_k\Big|_{t^0_a=0, t^1_a= (a+1)!x^{-a-2}dx}.
$$
By Theorem \ref{main}, it suffices to show that $\tW^\alpha_k$ agrees with
the expansion of $W^\alpha_k$ near $Y=0$ in $\tx = x^{-1}$.

We now compute $\tW^\alpha_k$ explicitly.
\begin{eqnarray*}
J &=& e^{(t^0+t^1 H)/z} (1+ \sum_{d=1}^\infty \frac{q^d}{\prod_{m=1}^d(H+mz)^2})\\
&=& e^{\frac{t^0}{z}}(1+t^1\frac{H}{z})\Big(1+\sum_{d=1}^\infty\frac{q^d}{z^{2d}(d!)^2}-2 (\sum_{d=1}^\infty\frac{q^d}{z^{2d}(d!)^2} \sum_{m=1}^d \frac{1}{m})\frac{H}{z})\Big) \\
&=& e^{\frac{t^0}{z}} (1+\sum_{d=1}^\infty\frac{q^d}{z^{2d}(d!)^2}) \\
&&  + e^{\frac{t^0}{z}}(t^1 (1+\sum_{d=1}^\infty\frac{q^d}{z^{2d+1}(d!)^2}) -2 \sum_{d=1}^\infty\frac{q^d}{z^{2d+1}(d!)^2}\sum_{m=1}^d\frac{1}{m}) H
\end{eqnarray*}
\begin{eqnarray*}
z\frac{\partial J}{\partial t^1} &=& e^{\frac{t^0}{z}} (\sum_{d=1}^\infty\frac{dq^d}{z^{2d-1}(d!)^2}) \\
&&  + e^{\frac{t^0}{z}}\big(t^1(\sum_{d=1}^\infty  \frac{dq^d}{z^{2d}(d!)^2}) +
1 + \sum_{d=1}^\infty\frac{q^d}{z^{2d}(d!)^2}(1-2d \sum_{m=1}^d\frac{1}{m})\big)H
\end{eqnarray*}
\begin{align*}
S^0_{\,\ 0}(z)=& (H,\cS(1))=(1,z\frac{\partial J}{\partial t^1})=
e^{\frac{t^0}{z}}\big(t^1(\sum_{d=1}^\infty  \frac{dq^d}{z^{2d}(d!)^2}) +
1 + \sum_{d=1}^\infty\frac{q^d}{z^{2d}(d!)^2}(1-2d \sum_{m=1}^d\frac{1}{m})\big)\\
S^1_{\,\ 0}(z)=& (1,\cS(1))=(1,J)=  e^{\frac{t^0}{z}}(t^1 (1+\sum_{d=1}^\infty\frac{q^d}{z^{2d+1}(d!)^2}) -2 \sum_{d=1}^\infty\frac{q^d}{z^{2d+1}(d!)^2}\sum_{m=1}^d\frac{1}{m})\\
S^0_{\,\ 1}(z)=& (H,\cS(H))=(H,z\frac{\partial J}{\partial t^1})=e^{\frac{t^0}{z}} (\sum_{d=0}^\infty\frac{q^{d+1}}{z^{2d+1}d!(d+1)!})\\
S^1_{\,\ 1}(z)=& (1,\cS(H))=(H,J)=  e^{\frac{t^0}{z}} (1+\sum_{d=1}^\infty\frac{q^d}{z^{2d}(d!)^2})
\end{align*}
$$
S^{\underline{\hat{\alpha}}}_{\,\ j}(z)=\sum_{i=0}^1 \Psi_i^{\,\ \alpha}S^i_{\,\ j}(z)
$$
\begin{align*}
S^{\underline{\hat{1}}}_{\,\ 1}(z)=&\frac{1}{\sqrt{2}}e^{\frac{t^0}{z}}\sum_{n=0}^\infty \frac{(\sqrt{q})^{n+\frac{1}{2}}}{z^n}
\frac{1}{\lfloor \frac{n}{2}\rfloor ! \lceil\frac{n}{2}\rceil!} \\
S^{\underline{\hat{2}}}_{\,\ 1}(z)=& \frac{1}{\sqrt{2}}e^{\frac{t^0}{z}}\sum_{n=0}^\infty \frac{(-\sqrt{q})^{n+\frac{1}{2}}}{z^n}
\frac{1}{\lfloor \frac{n}{2}\rfloor ! \lceil\frac{n}{2}\rceil!}\\
\end{align*}
$$
\tilde{\bu}^{\alpha}(z)=\sum_{i=0}^1 S^{\underline{\hat{\alpha}}}_{\,\ i}(z) t^i(z)
$$
\begin{eqnarray*}
\tilde{u}_k^{1}\Big|_{t^0_a=0} &=& \frac{1}{\sqrt{2}}\sum_{n=0}^\infty
\frac{(\sqrt{q})^{n+\frac{1}{2}}}{\lfloor \frac{n}{2}\rfloor ! \lceil\frac{n}{2}\rceil!} t^1_{k+n},\\
\tilde{u}_k^{2}\Big|_{t^0_a=0} &=&\frac{1}{\sqrt{2}}\sum_{n=0}^\infty
\frac{(-\sqrt{q})^{n+\frac{1}{2}}}{\lfloor \frac{n}{2}\rfloor ! \lceil\frac{n}{2}\rceil!} t^1_{k+n}
\end{eqnarray*}
For $\alpha=1,2$,
\begin{equation}\label{eqn:tWxi}
\tW^\alpha_k = \sqrt{-2} \tilde{u}_k^\alpha\Big|_{t^0_a=0, t^1_a=  (a+1)! x^{-a-2}dx}
=  d((-\frac{d}{dx})^k \txi_{\alpha,0})
\end{equation}
where
\begin{equation}\label{eqn:txi-one}
\txi_{1,0} :=  - \frac{1}{\sqrt{-1}}\sum_{n=0}^\infty
(\sqrt{q})^{n+\frac{1}{2}} {n \choose \lfloor \frac{n}{2}\rfloor}x^{-n-1}
\end{equation}
\begin{equation}\label{eqn:txi-two}
\txi_{2,0} :=  - \frac{1}{\sqrt{-1}}\sum_{n=0}^\infty
(-\sqrt{q})^{n+\frac{1}{2}} {n \choose \lfloor \frac{n}{2}\rfloor} x^{-n-1}
\end{equation}

Recall that
\begin{equation}\label{eqn:Wxi}
W^\alpha_k = d((-\frac{d}{dx})^k \xi_{\alpha,0})
\end{equation}
By \eqref{eqn:tWxi} and  \eqref{eqn:Wxi},
to complete the proof, it remains to show that,
$\txi_{\alpha,0}$ agree with the expansion
of $\xi_{\alpha,0}$ near $Y=0$ in $\tx=x^{-1}= (Y+\frac{q}{Y})^{-1}$.


Assume that $q \in (0,\infty)$. We have
\begin{eqnarray*}
&& P_1=\sqrt{q},\quad \Delta^1 = 2\sqrt{q},\quad \xi_{1,0}= \frac{1}{\sqrt{-1}}\frac{q^{1/4}}{Y-\sqrt{q}},\\
&& P_2=-\sqrt{q},\quad \Delta^2 = -2\sqrt{q},\quad \xi_{2,0}=\frac{q^{1/4}}{Y+\sqrt{q}},\\
\end{eqnarray*}

The $n$-th coefficient in the expansion of $\tx =(Y+\frac{q}{Y})^{-1}$ at $Y=0$ is given by the residue
\begin{align*}
\mathrm{Res}_{Y=0} \tx^{-n-1} \xi_{1,0} d\tx
=&- \frac{1}{\sqrt{-1}}q^{1/4}\mathrm{Res}_{Y=0} (Y+\frac{q}{Y})^{n-1} (1-\frac{q}{Y^2})\frac{dY}{Y-\sqrt{q}} \\
=& - \frac{1}{\sqrt{-1}}q^{1/4}\mathrm{Res}_{Y=0} \frac{(Y^2+q)^{n-1} (Y+\sqrt{q})}{Y^{n+1}}dY \\
=& -\frac{1}{\sqrt{-1}}(\sqrt{q})^{n-\frac{1}{2}}{n-1 \choose \lfloor \frac{n}{2}\rfloor}
\end{align*}
$$
\xi_{1,0} =  -\frac{1}{\sqrt{-1}}\sum_{n=1}^\infty (\sqrt{q})^{n-\frac{1}{2}}{n-1 \choose \lfloor \frac{n}{2}\rfloor}\tx^n
= -\frac{1}{\sqrt{-1}}\sum_{n=0}^\infty (\sqrt{q})^{n+\frac{1}{2}}{n\choose \lfloor \frac{n+1}{2} \rfloor} \tx^{n+1}
= -\frac{1}{\sqrt{-1}}\sum_{n=0}^\infty (\sqrt{q})^{n+\frac{1}{2}}{n\choose \lfloor \frac{n}{2} \rfloor} x^{-n-1}
$$
which agrees with $\txi_{1,0}$ defined in \eqref{eqn:txi-one}.

\begin{align*}
\mathrm{Res}_{Y=0} \tx^{-n-1} \xi_{2,0} d\tx
=&- q^{1/4}\mathrm{Res}_{Y=0} (Y+\frac{q}{Y})^{n-1} (1-\frac{q}{Y^2})\frac{dY}{Y+\sqrt{q}} \\
=& - q^{1/4}\mathrm{Res}_{Y=0} \frac{(Y^2+q)^{n-1} (Y-\sqrt{q})}{Y^{n+1}}dY \\
=& -\frac{1}{\sqrt{-1}}(-\sqrt{q})^{n-\frac{1}{2}}{n-1\choose \lfloor\frac{n}{2} \rfloor}
\end{align*}
$$
\xi_{2,0}= -\frac{1}{\sqrt{-1}}\sum_{n=0}^\infty (-\sqrt{q})^{n+\frac{1}{2}}{n\choose \lfloor \frac{n}{2} \rfloor} x^{-n-1}
$$
which agrees with $\txi_{2,0}$ defined in \eqref{eqn:txi-two}.

\end{proof}

\section{The large radius limit and the Bouchard-Mari\~{n}o conjecture}
\label{sec:BM}

In this section, we will specialize Theorem \ref{main} to the large radius limit case. In this case, Theorem \ref{main} relates the invariant $\omega_{g,n}$ of the limit curve to the equivariant descendent theory of $\bC$. After expanding $\xi_{\alpha,0}$ in suitable coordinates, we can relate the corresponding expansion of $\omega_{g,n}$ to the generation function of Hurwitz numbers and therefore reprove the Bouchard-Mari\~{n}o conjecture \cite{BM08} on Hurwitz numbers.

Let $\w_2=0$, $t_0=0$ and take the large radius limit $q\to 0$. Then our mirror curve becomes
$$x=Y+\w_1\log Y.$$
When $\w_1=-1$, this is just the Lambert curve.
Recall that the two critical points $P_1,P_2$ of $W^{\w}_t(Y)$ are
$$P_{\alpha}=\frac{\w_2-\w_1+\Delta^\alpha(q)}{2}.$$
Since $\Delta^1(0)=\w_1-\w_2$, $P_1\to 0$ under the limit $q\to 0$. In other words, $P_1$ goes out of the curve under the limit $q\to 0$ and $\xi_{1,0}= \sqrt{\frac{2}{\Delta^\alpha(q)}} \frac{P_1}{Y-P_1}\to 0$. As a result, $W^1_k = d(\theta^k(\xi_{1,0}))$ also turns to zero under the large radius limit.

Under the identification $\frac{1}{\sqrt{-2}}W^\alpha_k(Y_j)=(\tilde{u}_j)^\alpha_k$ in Theorem \ref{main}, we have $(\tilde{u}_j)^1_k\to 0$ when $q\to 0$. On the A-model side, since $q=0$, the $S-$matrix $(\mathring{S}^\alpha_{\, \ \beta}(z))$ is diagonal. Therefore, we also have $(u_j)^1_k\to 0$ when $q\to 0$ under the identification in Theorem \ref{main}. This means that in the localization graph of the equivariant GW invariants of $\bP^1$, we can only have a constant map to $p_2\in\bP^1$. Since $H|_{p_2}=\w_2=0$ and $t^0=0$, we can not have any primary insertions.
Therefore, in the large radius limit, we get
\begin{eqnarray*}
F^{\bP^1,\bC^*}_{g,n}(\bu_1,\cdots,\bu_n;\bt) &=& \sum_{a_1,\ldots, a_n\in \bZ_{\geq 0}}
\int_{[\Mbar_{g,n}(\bP^1,0)]^\vir}
\prod_{j=1}^n \ev_j^*((u_j)^2_{a_j}\phi_2(0)) \psi_j^{a_j}\\
&=&\sum_{a_1,\ldots, a_n\in \bZ_{\geq 0}}
\frac{1}{-\w_1}\int_{\Mbar_{g,n}}\prod_{j=1}^n (u_j)^2_{a_j} \psi_j^{a_j}\Lambda_{g}^{\vee}(-\w_1),
\end{eqnarray*}
where
$$
\Lambda_g^\vee(u)=u^g-\lambda_1 u^{g-1}+\cdots+(-1)^g\lambda_g.
$$
and $\lambda_j=c_j(\bE)$ is the $j$-th Chern class of the Hodge bundle. At the same time, we also have $\mathring{S}^2_{\, \ 2}=(\hat{\phi}_2(0),\hat{\phi}_2(0))=1$. So $\frac{(u_j)^2_k}{\sqrt{-\w_1}}=(\tilde{u}_j)^2_k$. Therefore Theorem \ref{main} specializes to
$$\omega_{g,n}|_{\frac{1}{\sqrt{-2}}W^2_k(Y_j)=
\frac{(u_j)^2_k}{\sqrt{-\w_1}}}=(-1)^{g-1+n}\sum_{a_1,\ldots, a_n\in \bZ_{\geq 0}}
\frac{1}{-\w_1}\int_{\Mbar_{g,n}}\prod_{j=1}^n (u_j)^2_{a_j} \psi_j^{a_j}\Lambda_{g}^{\vee}(-\w_1).$$

Now we study the expansion of $\xi_{2,0}$ near the point $Y=0$ in the coordinate $Z=e^{\frac{x}{\w_1}}$.
We have
$$
\xi_{2,0}=\frac{1}{\sqrt{-1}}\sqrt{\frac{2}{-\w_1}}\frac{-\w_1}{Y+\w_1}.
$$
Since $Z=Ye^{\frac{Y}{\w_1}}$, by taking the differential we have
$$\frac{dZ}{Z}=\frac{Y+\w_1}{Y\w_1}dY.$$
Therefore, $\xi_{2,0}=-\frac{1}{\sqrt{-1}}\sqrt{\frac{2}{-\w_1}}\frac{dY}{\frac{dZ}{Z}}\frac{1}{Y}$.
Let
$$\xi_{2,0}=\sum_{\mu=0}^{\infty}C_\mu Z^\mu.$$
near the point $Y=0$. Then we have
$$
C_\mu =\Res_{Y\to 0}\xi_{2,0}Z^{-\mu}\frac{dZ}{Z}\\
=-\frac{1}{\sqrt{-1}}\sqrt{\frac{2}{-\w_1}}\Res_{Y\to 0} e^{-\frac{\mu Y}{\w_1}}\frac{dY}{Y^{\mu+1}}\\
=-\frac{1}{\sqrt{-1}}\sqrt{\frac{2}{-\w_1}}\frac{(-\frac{\mu}{\w_1})^{\mu}}{\mu!}.
$$
Therefore
$$
W^2_k=-\frac{1}{\sqrt{-1}}\sqrt{\frac{2}{-\w_1}}\w_1\sum_{\mu=0}^{\infty}\frac{(-\frac{\mu}{\w_1})^{\mu}}{\mu!}
(-\frac{\mu}{\w_1})^{k+1}Z^{\mu-1}dZ.
$$

On A-model side, let
$$
(u_j)^2_{a_j}=\sum_{\mu_j=0}^{\infty}\frac{(-\frac{\mu_j}{\w_1})^{\mu_j}}{\mu_j!}
(\frac{\mu_j}{\w_1})^{a_j}Z^{\mu_j}_j.
$$
Then
\begin{eqnarray*}
&& F^{\bC,\bC^*}_{g,n}(\bu_1,\cdots,\bu_n)\\
&=&  \sum_{a_1,\ldots, a_n\in \bZ_{\geq 0}}
\frac{1}{-\w_1}\int_{\Mbar_{g,n}}\prod_{j=1}^n  \psi_j^{a_j}\Lambda_{g}^{\vee}(-\w_1)
\prod_{j=1}^{n}(\sum_{\mu_j=0}^{\infty}\frac{(-\frac{\mu_j}{\w_1})^{\mu_j}}{\mu_j!}
(-\frac{\mu_j}{\w_1})^{a_j}Z^{\mu_j}_j)\\
&=& \sum_{a_1,\ldots, a_n\in \bZ_{\geq 0}}
\frac{1}{-\w_1}\int_{\Mbar_{g,n}}\prod_{j=1}^n  (-\frac{\mu_j\psi_j}{\w_1})^{a_j}\Lambda_{g}^{\vee}(-\w_1)
\prod_{j=1}^{n}(\sum_{\mu_j=0}^{\infty}\frac{(-\frac{\mu_j}{\w_1})^{\mu_j}}{\mu_j!}Z^{\mu_j}_j).
\end{eqnarray*}
By the ELSV formula \cite{ELSV, GV},
\begin{eqnarray*}
H_{g,\mu} &=& \frac{(2g-2+|\mu|+n)!}{|\Aut(\mu)|}
\prod_{j=1}^n \frac{\mu_j^{\mu_j}}{\mu_j!}
\int_{\Mbar_{g,n}}\frac{\Lambda_g^\vee(1)}{\prod_{j=1}^n(1-\mu_j)}\\
&=&\frac{(2g-2+|\mu|+n)!}{|\Aut(\mu)|}
\prod_{j=1}^n \frac{\mu_j^{\mu_j}}{\mu_j!}
\int_{\Mbar_{g,n}}\frac{\Lambda_g^\vee(-\w_1)(-\w_1)^{2g-3+2n}}{\prod_{j=1}^n(-\w_1-\mu_j)}.\\
\end{eqnarray*}

So 
$$
F^{\bC,\bC^*} = \sum_{\ell(\mu)=n}
\frac{|\Aut(\mu)|}{(2g-2+|\mu|+n)!(-\w_1)^{2g-2+|\mu|+n} } H_{g,\mu} \sum_{\sigma\in S_n} \prod_{j=1}^n Z^{\mu_j}_{\sigma(j)}.
$$
When $\w_1=-1$, this is just the generating function of Hurwitz numbers.

Let $W_{g,n}(Z_1,\cdots,Z_n)$ be the expansion of $\omega_{g,n}(Y_1,\cdots,Y_n)$ in the coordinate $Z$ near $Y=0$. Then we have

\begin{corollary}[Bouchard-Mari\~{n}o conjecture]
For $n>0$ and $2g-2+n>0$, the invariant $W_{g,n}(Z_1,\cdots,Z_n)$ for the curve $x=Y+\w_1\log Y$ satisfies
\begin{eqnarray*}
&& \int^{Z_1}_0\cdots\int^{Z_n}_0W_{g,n}(Z_1,\cdots,Z_n)\\
&=& (-1)^{g-1+n}\sum_{a_1,\ldots, a_n\in \bZ_{\geq 0}}
\frac{1}{-\w_1}\int_{\Mbar_{g,n}}\prod_{j=1}^n  \psi_j^{a_j}\Lambda_{g}^{\vee}(-\w_1)
\prod_{j=1}^{n}(\sum_{\mu_j=0}^{\infty}\frac{(-\frac{\mu_j}{\w_1})^{\mu_j+a_j}}{\mu_j!}
Z^{\mu_j}_j)\\
&=& (-1)^{g-1+n}\sum_{\ell(\mu)=n}
\frac{|\Aut(\mu)|H_{g,\mu}}{(2g-2+|\mu|+n)!(-\w_1)^{2g-2+|\mu|+n} } \sum_{\sigma\in S_n} \prod_{j=1}^n Z^{\mu_j}_{\sigma(j)}.
\end{eqnarray*}
In particular, when $\w_1=-1$, the right hand side is the generating function of Hurwitz numbers and the Bouchard-Mari\~{n}o conjecture is recovered.
\end{corollary}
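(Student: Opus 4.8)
The plan is to derive the corollary as the large radius specialization ($\w_2 = 0$, $t^0 = 0$, $q \to 0$) of Theorem~\ref{main}, combined with the ELSV formula. First I would track the degeneration of the A-model side: since $\Delta^1(0) = \w_1 - \w_2 = \w_1$, the critical point $P_1$ runs into $Y = 0$ and off the curve, so $\xi_{1,0} \to 0$ and $W^1_k \to 0$; as the $S$-matrix becomes diagonal at $q = 0$, the identification $\frac{1}{\sqrt{-2}}W^\alpha_k(Y_j) = (\tilde u_j)^\alpha_k$ of Theorem~\ref{main} then forces $(u_j)^1_k \to 0$ as well. Hence only constant maps to $p_2$ survive in the localization graph, and since $H|_{p_2} = \w_2 = 0$ and $t^0 = 0$ no primary insertions are allowed. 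Using $\Mbar_{g,n}(\bP^1,0) = \Mbar_{g,n}\times\bP^1$, the equivariant Euler class of the obstruction bundle $\bE^\vee\otimes T\bP^1$, the identities $H\cup\phi_2 = \w_2\phi_2 = 0$ and $\int_{\bP^1}\phi_2 = -1/\w_1$, the virtual integral collapses to $\frac{1}{-\w_1}\int_{\Mbar_{g,n}}\prod_j\psi_j^{a_j}\Lambda_g^\vee(-\w_1)$, which is the first displayed equality.

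Next I would pass to the coordinate $Z = e^{x/\w_1} = Y e^{Y/\w_1}$. From $\xi_{2,0} = \frac{1}{\sqrt{-1}}\sqrt{\frac{2}{-\w_1}}\frac{-\w_1}{Y+\w_1}$ and $\frac{dZ}{Z} = \frac{Y+\w_1}{Y\w_1}\,dY$, a residue computation with the Lagrange-inversion input $\Res_{Y\to 0} e^{-\mu Y/\w_1}\,dY/Y^{\mu+1} = (-\mu/\w_1)^\mu/\mu!$ gives $\xi_{2,0} = \sum_\mu C_\mu Z^\mu$ with $C_\mu = -\frac{1}{\sqrt{-1}}\sqrt{\frac{2}{-\w_1}}\frac{(-\mu/\w_1)^\mu}{\mu!}$; since $\frac{d}{dx}Z^\mu = \frac{\mu}{\w_1}Z^\mu$, the primitives $(-\tfrac{d}{dx})^k\xi_{2,0}$ of $W^2_k$ are explicit power series in $Z$. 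Because $\mathring{S}^2_{\,2} = (\hat{\phi}_2(0),\hat{\phi}_2(0)) = 1$, the substitution in Theorem~\ref{main} reads $\frac{1}{\sqrt{-2}}W^2_k(Y_j) = (u_j)^2_k/\sqrt{-\w_1}$, and matching coefficients in $Z_j$ produces the stated series for $(u_j)^2_{a_j}$. Feeding this into the A-model expression and summing the geometric series $\sum_{a_j}(\mu_j\psi_j/\w_1)^{a_j}$ turns $\int_0^{Z_1}\!\cdots\!\int_0^{Z_n}W_{g,n}$ into a sum over $\mu$ of $\int_{\Mbar_{g,n}}\Lambda_g^\vee(-\w_1)/\prod_j(1-\mu_j\psi_j/\w_1)$, which is the first line of the corollary; the lower limit $0$ introduces no constant because $\omega_{g,n}$ is holomorphic near $Y = 0$ when $2g-2+n > 0$.

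Finally I would substitute the ELSV formula $H_{g,\mu} = \frac{(2g-2+|\mu|+n)!}{|\Aut(\mu)|}\prod_j\frac{\mu_j^{\mu_j}}{\mu_j!}\int_{\Mbar_{g,n}}\frac{\Lambda_g^\vee(1)}{\prod_j(1-\mu_j\psi_j)}$, rescaled via the homogeneity of $\int_{\Mbar_{g,n}}$ to produce the factor $(-\w_1)^{2g-3+2n}$ and to convert $\Lambda_g^\vee(1)$ into $\Lambda_g^\vee(-\w_1)$; this identifies the Hodge-integral expression with $\sum_{\ell(\mu)=n}\frac{|\Aut(\mu)|H_{g,\mu}}{(2g-2+|\mu|+n)!(-\w_1)^{2g-2+|\mu|+n}}\sum_{\sigma\in S_n}\prod_j Z_{\sigma(j)}^{\mu_j}$, the second line. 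Setting $\w_1 = -1$ makes the right-hand side the generating function of simple Hurwitz numbers, so the Bouchard-Mari\~{n}o conjecture is recovered.

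The step I expect to be the main obstacle is not conceptual but the bookkeeping of normalizations: propagating the powers of $\w_1$, the factors $\sqrt{-1}$ and $\sqrt{2}$ coming from the definitions of $\xi_{\alpha,0}$ and $\Delta^\alpha(q)$, and the sign $(-1)^{g-1+n}$, consistently through both the large radius limit and the change of variables to $Z$, and verifying that the homogeneity rescaling of the ELSV formula yields exactly the power $(-\w_1)^{2g-2+|\mu|+n}$ appearing in the final formula.
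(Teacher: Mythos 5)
Your route is exactly the paper's own proof: specialize Theorem \ref{main} at $\w_2=0$, $t^0=0$, $q\to 0$ (so $\xi_{1,0},W^1_k\to 0$, the $S$-matrix is diagonal, only degree-zero maps to $p_2$ with the Hodge obstruction class $\Lambda_g^\vee(-\w_1)/(-\w_1)$ survive), expand $\xi_{2,0}$ in $Z=Ye^{Y/\w_1}$ via the residue computation of $C_\mu$, and then invoke the ELSV formula rescaled by $(-\w_1)^{2g-3+2n}$, so the proposal is essentially identical to the paper's argument. The one slip is the intermediate geometric series: it should be $\sum_{a_j}\bigl(-\mu_j\psi_j/\w_1\bigr)^{a_j}=\bigl(1+\mu_j\psi_j/\w_1\bigr)^{-1}$, which at $\w_1=-1$ gives the ELSV integrand $\prod_j(1-\mu_j\psi_j)^{-1}$, not $\sum_{a_j}(\mu_j\psi_j/\w_1)^{a_j}$ as you wrote (a sign the paper's own intermediate display also garbles); carrying through your stated normalizations $W^2_k=d\bigl((-\frac{d}{dx})^k\xi_{2,0}\bigr)$, $\frac{d}{dx}Z^\mu=\frac{\mu}{\w_1}Z^\mu$, and the $\int_0^{Z_j}$ integration produces the correct sign automatically, which is precisely the bookkeeping you flagged as the remaining check.
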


\begin{appendix}

\section{Bessel functions} \label{sec:bessel}
In this section, we give a brief review of Bessel functions.

The Bessel's differential equation is
\begin{equation} \label{eqn:bessel}
x^2\frac{d^2 y}{dx^2} + x\frac{dy}{dx} + (x^2-\alpha^2) y=0.
\end{equation}
The {\em Bessel function of the first kind} is defined by
$$
J_\alpha(x) = \sum_{m=0}^\infty \frac{(-1)^m}{m!\Gamma(m+\alpha+1)}(\frac{x}{2})^{2m+\alpha}.
$$
The {\em Bessel function of the second kind} is defined by
$$
Y_\alpha(x)=\frac{J_\alpha(x)\cos(\alpha\pi)-J_{-\alpha}(x)}{\sin(\alpha \pi)}.
$$
When $n$ is an integer, $Y_n(x):= \lim_{\alpha\to n}Y_\alpha(x)$.

$J_\alpha(x)$ and $Y_\alpha(x)$ form a basis of the
2-dimensional space of solutions to the Bessel's differential equation \eqref{eqn:bessel}.

Replacing $x$ by $ix$ in \eqref{eqn:bessel}, one obtains the
the modified Bessel differential equation
\begin{equation}\label{eqn:modified-bessel}
x^2\frac{d^2 y}{dx^2} + x\frac{dy}{dx} - (x^2 + \alpha^2) y=0.
\end{equation}

The {\em modified Bessel function of the first kind} is defined by
$$
I_\alpha(x)= i^{-\alpha} J_\alpha(ix) =
\sum_{m=0}^\infty \frac{1}{m!\Gamma(m+\alpha+1)}(\frac{x}{2})^{2m+\alpha}.
$$
The {\em modified Bessel function of the second kind} is defined by
$$
K_\alpha(x) = \frac{\pi}{2}\frac{I_{-\alpha}(x)-I_\alpha(x)}{\sin(\alpha \pi)}.
$$

The following integral formulas are valid when $\Re(x)>0$:
\begin{eqnarray*}
I_\alpha(x) &=&  \frac{1}{\pi} \int_0^\pi e^{x\cos\theta}\cos(\alpha\theta) d\theta
-\frac{\sin(\alpha\pi)}{\pi}\int_0^\infty e^{-x\cosh t -\alpha t} dt\\
K_\alpha(x) &=& \int_0^\infty e^{-x\cosh t} \cosh(\alpha t) dt = \frac{1}{2} \int_{t\in \gamma_{0,0}}e^{-x\cosh t - \alpha t} dt
\end{eqnarray*}
where $\gamma_{0,0}$ is the real line with the standard orientation:
\begin{figure}[h]
\begin{center}
\psfrag{infty}{\small $+\infty$}
\psfrag{-infty}{\small $-\infty$}
\includegraphics[scale=0.6]{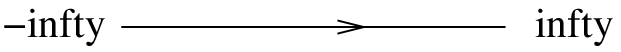}
\caption{The contour $\gamma_{0,0}$}
\end{center}
\end{figure}

\begin{eqnarray*}
&& e^{\alpha \pi i} K_\alpha(x) + i\pi I_\alpha (x)
=  \frac{\pi}{2} \frac{e^{\alpha \pi i}I_{-\alpha}(x) - e^{-\alpha \pi i} I_\alpha(x)}{\sin (\alpha \pi)}\\
&=& \frac{e^{\alpha \pi i}}{2}\int_{-\infty}^0 e^{-x\cosh t-\alpha t}dt
+\frac{e^{\alpha \pi i }}{2} \int_0^{2\pi} e^{-x\cos(i\theta)-\alpha(i\theta)} d(i\theta)
+\frac{e^{-\alpha \pi i}}{2}\int_0^\infty e^{-x\cosh t -\alpha t}dt \\
&=& \frac{e^{\alpha \pi i}}{2}\int_{\gamma_{0,1}} e^{-x\cosh t-\alpha t} dt
\end{eqnarray*}
where $\gamma_{0,1}$ is the following contour:
\begin{figure}[h]
\begin{center}
\psfrag{infty}{\small $+\infty +2\pi i$}
\psfrag{-infty}{\small $-\infty$}
\psfrag{0}{\small $0$}
\psfrag{2pi}{\small $2\pi i$}
\includegraphics[scale=0.6]{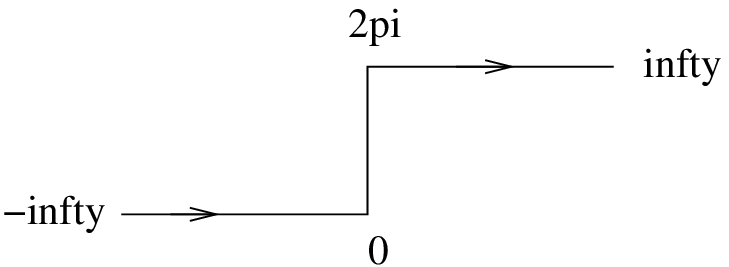}
\caption{The contour $\gamma_{0,1}$}
\end{center}
\end{figure}

Therefore,
\begin{equation}\label{eqn:gamma-zero}
\int_{\gamma_{0,0}}e^{-x\cosh t-\alpha t} dt = \frac{\pi}{\sin(\alpha \pi)}(I_{-\alpha}(x)-I_\alpha(x))
\end{equation}
\begin{equation}\label{eqn:gamma-one}
\int_{\gamma_{0,1}}e^{-x\cosh t -\alpha t} dt = \frac{\pi}{\sin(\alpha \pi)}(I_{-\alpha}(x)-e^{-2\alpha \pi i} I_\alpha(x) )
\end{equation}
For any integers $l_1,l_2$ with $l_1+l_2 \geq 0$, let $\gamma_{l_1,l_2}$ be the following contour:
\begin{figure}[h]
\begin{center}
\psfrag{infty}{\small $+\infty + 2 l_2 \pi i $}
\psfrag{-infty}{\small $-\infty - 2l_1 \pi i$}
\psfrag{L1}{\small $-2 l_1 \pi i $}
\psfrag{L2}{\small $2 l_2 \pi i$}
\includegraphics[scale=0.6]{l.eps}
\caption{The contour $\gamma_{l_1,l_2}$}
\end{center}
\end{figure}

\begin{lemma}\label{integral-bessel}
For any $l_1,l_2 \in \bZ$ such that $l_1+l_2\geq 0$, we have
\begin{equation}\label{eqn:gamma-l}
\int_{\gamma_{l_1,l_2}}e^{-x\cosh t -\alpha t} dt =
\frac{\pi}{\sin(\alpha\pi)} \big(e^{2 l_1 \alpha \pi i} I_{-\alpha}(x)
-e^{-2 l_2 \alpha \pi i} I_\alpha(x)\big)
\end{equation}
\end{lemma}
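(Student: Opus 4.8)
The plan is to regard the integrand $f(t)=e^{-x\cosh t-\alpha t}$ as an entire function of $t$ and to exploit its quasi-periodicity: since $\cosh(t+2\pi i)=\cosh t$, one has $f(t+2\pi i)=e^{-2\alpha\pi i}f(t)$. Assuming $\Re(x)>0$ as in the integral formulas above, the integral of $f$ converges along any horizontal ray whose imaginary part is an even multiple of $\pi$, because there $\Re(\cosh t)=\cosh(\Re t)\to+\infty$ as $\Re t\to\pm\infty$. The first step is to observe that $\int_{\gamma_{l_1,l_2}}f\,dt$ depends only on the heights $-2l_1\pi$ and $2l_2\pi$ of the two horizontal rays of $\gamma_{l_1,l_2}$: given two contours with the same ray heights, truncate both at $\Re t=\pm R$, take their difference (a closed loop lying in a compact region in which $f$ has no singularities), apply Cauchy's theorem, and let $R\to\infty$, using the decay on the rays.

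Next I would reduce to the special contours $\gamma_{0,N}$ with $N\ge 0$. Translating $\gamma_{l_1,l_2}$ upward by $2\pi i$ gives a contour whose rays sit at heights $-2(l_1-1)\pi$ and $2(l_2+1)\pi$, that is, a contour of type $\gamma_{l_1-1,l_2+1}$; combining the change of variables with the quasi-periodicity yields
$$
\int_{\gamma_{l_1,l_2}}f\,dt=e^{2\alpha\pi i}\int_{\gamma_{l_1-1,l_2+1}}f\,dt .
$$
Iterating this (and the inverse move, which is available when $l_1<0$ because the hypothesis $l_1+l_2\ge 0$ forces $l_2\ge -l_1$) reduces everything to
$$
\int_{\gamma_{l_1,l_2}}f\,dt=e^{2l_1\alpha\pi i}\int_{\gamma_{0,N}}f\,dt,\qquad N:=l_1+l_2\ge 0 ,
$$
so it suffices to prove \eqref{eqn:gamma-l} for $\gamma_{0,N}$.

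For $\gamma_{0,N}$ I would use that it coincides with $\gamma_{0,0}$ along the ray $(-\infty,0]$, so that $\int_{\gamma_{0,N}}f-\int_{\gamma_{0,0}}f$ is the integral of $f$ over the vertical segment $[0,2N\pi i]$ together with the horizontal tail at height $2N\pi$, minus the integral over $[0,+\infty)$. Parametrizing the vertical segment by $t=i\theta$ and cutting $[0,2N\pi]$ into $N$ periods, and shifting the horizontal tail down by $2N\pi i$, the quasi-periodicity turns each contribution into a geometric series in $e^{-2\alpha\pi i}$; a short computation gives the purely formal identity
$$
\int_{\gamma_{0,N}}f\,dt-\int_{\gamma_{0,0}}f\,dt=\frac{1-e^{-2N\alpha\pi i}}{1-e^{-2\alpha\pi i}}\left(\int_{\gamma_{0,1}}f\,dt-\int_{\gamma_{0,0}}f\,dt\right).
$$
Substituting the known values $\int_{\gamma_{0,0}}f=\tfrac{\pi}{\sin(\alpha\pi)}(I_{-\alpha}(x)-I_\alpha(x))$ and $\int_{\gamma_{0,1}}f=\tfrac{\pi}{\sin(\alpha\pi)}(I_{-\alpha}(x)-e^{-2\alpha\pi i}I_\alpha(x))$ from \eqref{eqn:gamma-zero} and \eqref{eqn:gamma-one}, and noting that the bracket equals $\tfrac{\pi}{\sin(\alpha\pi)}(1-e^{-2\alpha\pi i})I_\alpha(x)$, the geometric factor cancels and one obtains $\int_{\gamma_{0,N}}f=\tfrac{\pi}{\sin(\alpha\pi)}(I_{-\alpha}(x)-e^{-2N\alpha\pi i}I_\alpha(x))$; plugging this into the reduction of the previous paragraph produces exactly \eqref{eqn:gamma-l}.

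The step I expect to require the most care is the first one — the ``depends only on the ray heights'' assertion and the $2\pi i$-translation argument — because $f$ grows super-exponentially as $\Re t\to\pm\infty$ along any line with $\Im t$ an odd multiple of $\pi$, so the deformations and truncations must be kept within the finite region (and along the imaginary axis, where $\cosh$ is bounded) and must never be dragged across such a sector. Once those manipulations are justified, the remainder is the elementary geometric-series bookkeeping anchored at the two base cases \eqref{eqn:gamma-zero} and \eqref{eqn:gamma-one}, which are already established from the Bessel integral representations.
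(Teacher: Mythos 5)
Your proof is correct and follows essentially the same route as the paper: the $2\pi i$-quasi-periodicity giving the shift identity, reduction to the base contours $\gamma_{0,0}$ and $\gamma_{0,1}$, and geometric-series bookkeeping anchored at \eqref{eqn:gamma-zero} and \eqref{eqn:gamma-one}. The only difference is organizational -- you first normalize to $\gamma_{0,N}$ and cut the vertical segment into periods, whereas the paper decomposes $\gamma_{l_1,l_2}$ directly as a signed sum of shifted copies of $\gamma_{0,1}$ and $\gamma_{0,0}$ -- and the two computations are identical in substance.
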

\begin{proof} We observe that
\begin{equation} \label{eqn:shift}
\int_{\gamma_{l_1- k ,l_2 + k}} e^{-x\cosh t-\alpha t} dt = e^{-2k \alpha \pi i}
\int_{\gamma_{l_1,l_2}} e^{-x\cosh t-\alpha t} dt.
\end{equation}
In particular,
$$
\int_{\gamma_{l_1 ,-l_1}} e^{-x\cosh t-\alpha t} dt = e^{2\ell_1 \alpha \pi i} \int_{\gamma_{0, 0}} e^{-x\cosh t-\alpha t} dt
= \frac{\pi}{\sin(\alpha \pi)}(e^{-2 l_1 \alpha \pi i} I_{-\alpha}(x)- e^{2 l_1\alpha \pi i} I_\alpha(x))
$$
This proves \eqref{eqn:gamma-l} in case $l_1+ l_2=0$. If $l_1+ l_2>0$ then
\begin{equation}\label{eqn:decompose}
\gamma_{l_1,l_2}= \sum_{k=-l_1}^{l_2-1}\gamma_{1-k,k}-\sum_{k=1-l_1}^{l_2-1}\gamma_{-k,k}.
\end{equation}
Equations \eqref{eqn:shift} and \eqref{eqn:decompose} imply
\begin{eqnarray*}
&& \int_{\gamma_{l_1,l_2}}e^{-x\cosh t -\alpha t} dt \\
& = & \Big(\sum_{k =-l_1}^{l_2-1} e^{-2 k\alpha \pi i}\Big)\cdot \int_{\gamma_{0,1}}e^{-x\cosh t -\alpha t} dt
-  \Big(\sum_{k =1-l_1}^{l_2-1} e^{-2 k\alpha \pi i}\Big)\cdot \int_{\gamma_{0,0}}e^{-x\cosh t -\alpha t} dt
\end{eqnarray*}
Equation \eqref{eqn:gamma-l} follows from the above equation and \eqref{eqn:gamma-zero}, \eqref{eqn:gamma-one}.
\end{proof}

\section{The Equivariant Quantum Differential Equation for $\bP^1$} \label{sec:QDE}
The equivariant quantum differential equation of $\bP^1$ is the vector equation
$$
zq\frac{d}{dq}\vec{I}= \left(\begin{array}{cc} 0 & q-\w_1\w_2\\ 1 & \w_1+\w_2\end{array} \right)\vec{I}
$$
which is equivalent to the following scalar equation:
\begin{equation} \label{eqn:qde}
(z q\frac{d}{dq}-\w_1)(zq\frac{d}{dq}-\w_2)I = q I.
\end{equation}
Let
$$
I=e^{\frac{\w_1+\w_2}{2z} \log q} y,\quad  x=\frac{2\sqrt{q}}{z}.
$$
Then \eqref{eqn:qde} is equivalent to
$$
x^2\frac{d^2y}{dx^2} +x\frac{dy}{dx} - (x^2 + (\frac{\w_1-\w_2}{2z})^2) y =0
$$
which is the modified Bessel differential equation \eqref{eqn:modified-bessel}
with $\alpha=\frac{\w_1-\w_2}{2z}$. When $\w_1-\w_2\neq 0$, any solution to \eqref{eqn:qde} is of the form
$$
I = e^{\frac{\w_1+\w_2}{2z}\log q }\Big(c_1 I_{\frac{\chi^1}{z}}(\frac{2\sqrt{q}}{z}) +
c_2 I_{\frac{\chi^2}{z}}(\frac{2\sqrt{q}}{z}) \Big).
$$
where $\chi^1=\w_1-\w_2 = -\chi^2$, and $c_1, c_2$ are functions of $\w_1,\w_2, z$.

\end{appendix}

\end{document}